    \newcommand{\href}[2]{#2}
\theoremstyle{plain}
  \newtheorem{lemma}[equation]{Lemma}
  \newtheorem{proposition}[equation]{Proposition}
  \newtheorem{theorem}[equation]{Theorem}
  \newtheorem{corollary}[equation]{Corollary} 
   \newtheorem{conj}[equation]{Conjecture}
\theoremstyle{definition}
  \newtheorem{definition}[equation]{Definition}
  \newtheorem*{notation}{Notation}
\theoremstyle{remark}
  \newtheorem{remark}[equation]{Remark}
\renewcommand{\thesection}{\arabic{section}}
\renewcommand{\theequation}{\thesection.\arabic{equation}}
 \DeclareFontFamily{U}{manual}{}
 \DeclareFontShape{U}{manual}{m}{n}{ <->  manfnt }{}
 \newcommand{\manfntsymbol}[1]{%
    {\fontencoding{U}\fontfamily{manual}\selectfont\symbol{#1}}}
\endgroup\end{trivlist}}
  \DeclareFontFamily{OT1}{pzc}{}
  \DeclareFontShape{OT1}{pzc}{m}{it}{<-> s * [1.100] pzcmi7t}{}
  \DeclareMathAlphabet{\mathpzc}{OT1}{pzc}{m}{it}
\newif\ifhascomments \hascommentstrue
  \newcommand{\anton}[1]{{\color{red}[[\ensuremath{\bigstar\bigstar\bigstar} #1]]}}
  \newcommand{\matt}[1]{{\color{red}[[\ensuremath{\spadesuit\spadesuit\spadesuit} #1]]}}
  \newcommand{\anton}[1]{}
  \newcommand{\matt}[1]{}
\newcommand{\<}{\langle}
\renewcommand{\>}{\rangle} % old \> is a spacing command
\DeclareMathOperator{\ann}{Ann}
\DeclareMathOperator{\aut}{Aut}
\DeclareMathOperator{\Aut}{\ensuremath{\mathcal{A}\kern-.125em\mathpzc{ut}}}
\newcommand{\C}{\mathcal C}
\renewcommand{\emptyset}{\varnothing}
\newcommand{\E}{\mathcal E}
\DeclareMathOperator{\Endo}{\ensuremath{\mathcal{E}\kern-.125em\mathpzc{nd}}}
\newcommand{\F}{\mathcal F}
\newcommand{\Proj}{\mathcal P}
\DeclareMathOperator{\Hom}{\ensuremath{\mathcal{H}\kern-.125em\mathpzc{om}}}
\newcommand{\id}{\mathrm{id}}
\DeclareMathOperator{\im}{Im}
\renewcommand{\L}{\mathcal L}
\newcommand{\M}{\mathcal M}
\DeclareMathOperator{\MM}{M}
\newcommand{\N}{\mathcal N}
\newcommand{\NN}{\mathbb N}
\renewcommand{\O}{\mathcal O}
\newcommand{\QQ}{\mathbb Q}
\DeclareMathOperator{\rad}{rad}
\renewcommand{\setminus}{\smallsetminus}
\DeclareMathOperator{\spec}{Spec}
\newcommand{\T}{\mathcal T}
\newcommand{\Z}{\mathcal{Z}}
\newcommand{\ZZ}{\mathbb{Z}}
 \def\ari[#1]{\ar@{^(->}[#1]}
 \def\are[#1]{\ar[#1]^{\txt{\'et}}}
 \def\areh[#1]{\ar[#1]|{\txt{$H$-eq}}^{\txt{\'et}}}
 \def\ars[#1]{\ar@{->>}[#1]}
 \newcommand{\dplus}{\ar@{}[d]|{\mbox{$\oplus$}}}
 \newcommand{\dtimes}{\ar@{}[d]|{\mbox{$\times$}}}
\DeclareMathOperator{\supp}{Supp}
\DeclareMathOperator{\Tor}{Tor}
\newcommand{\J}{{\mathcal J}}
\newcommand{\B}{{\mathcal B}}
\newcommand{\Q}{{\mathcal Q}}
\newcommand{\HH}{{\mathcal H}}
\DeclareMathOperator{\shTor}{\mathcal{T}\!\mathit{or}}
\newcommand{\ang}[1]{\langle #1 \rangle}
\title[Dynamical Mordell-Lang Conjecture for Coherent Sheaves]{On a Dynamical Mordell-Lang Conjecture\\for Coherent Sheaves}% and Applications to Critical Transversality}
\author{Jason P. Bell}
\address{University of Waterloo \\
Department of Pure Mathematics \\
Waterloo, Ontario \\
Canada  N2L 3G1}
\email{jpbell@uwaterloo.ca}
\author{Matthew Satriano}
\address{University of Waterloo \\
Department of Pure Mathematics \\
Waterloo, Ontario \\
Canada  N2L 3G1}
\email{msatriano@uwaterloo.ca}
\author{Susan J. Sierra}
\address{University of Edinburgh\\
School of Mathematics\\
Edinburgh\\
EH9 3FD\\
United Kingdom}
\email{s.sierra@ed.ac.uk}
\thanks{The first- and second-named authors were supported by NSERC grants RGPIN-2016-03632 and RGPIN-2015-05631; the third-named author was  partially supported by 
 EPSRC grant EP/M008460/1.}
\keywords{dynamical Mordell-Lang, unlikely intersections, Strassman, homologically transverse, affinoid}
\subjclass[2010]{37P55, 14G99, 11D88}
\begin{document}

\begin{abstract} We introduce a dynamical Mordell-Lang-type conjecture for coherent sheaves. When the sheaves are structure sheaves of closed subschemes, our conjecture becomes a statement about unlikely intersections. We prove an analogue of this conjecture for affinoid spaces, which we then use to prove our conjecture in the case of surfaces. These results rely on a module-theoretic variant of Strassman's theorem that we prove in the appendix.
\end{abstract}

\maketitle
\tableofcontents

\section{Introduction}
We formulate a generalized dynamical Mordell-Lang conjecture for coherent sheaves. We prove our conjecture for surfaces with an automorphism, as well as for quasi-projective varieties $X$ with an automorphism that lies in an algebraic group acting on $X$. The heart of our argument relies on first proving a variant of our conjecture for affinoid algebras. One of our key tools is a module-theoretic analogue of Strassman's theorem \cite{strassman}; Strassman's original theorem concerns zeros of convergent power series. We believe this result may be of independent interest, so we include it in an appendix.

Before stating our conjecture and results, we begin with a review of the dynamical Mordell-Lang conjecture, which is now a theorem in the case of an \'etale self-map \cite{BGT}. Let $X$ be a quasiprojective variety over an algebraically closed field of characteristic zero, $\Phi : X\to X$ a morphism, and $Y$ a closed subvariety of $X$.  For $n\ge 0$ we let $\Phi^n$ denote the $n$-fold composition $\Phi\circ \cdots \circ \Phi$.  The \emph{dynamical Mordell-Lang conjecture} asserts that for all $x\in X$, the set of natural numbers $n$ for which $\Phi^n(x)\in Y$ is a finite union of infinite arithmetic progressions along with a finite set.  In the case that there are infinitely many natural numbers $n$ for which $\Phi^n(x)\in Y$, the dynamical Mordell-Lang conjecture guarantees the existence of an infinite arithmetic progression of such $n$. As a result, there is some closed subset $Y_0\subseteq Y$ with the property that $\Phi^a(Y_0)\subseteq Y_0$ for some $a \in \ZZ_{\geq 1}$ and such that some iterate of $x$ under $\Phi$ lies in $Y_0$. Indeed, if $P:=m+a\ZZ$ is an infinite arithmetic progression such that $\Phi^n(x)\in Y$ for all $n\in P$, then we can take $Y_0$ to be the closure of the $\Phi^n(x)$ with $n\in P$.

In other words, one can interpret the conjecture as follows: one expects that $Y$ contains only finitely many iterates $\Phi^n(x)$, and this is indeed the case provided there is no compelling geometric reason to the contrary, namely the existence of $Y_0$ as above. When cast in this form, it is natural to try to extend the conjecture beyond the case of points.  To do so, one must first define what it means for two subvarieties to intersect ``as expected.''  Here, one gains some insight from Serre's intersection formula \cite[p.~427]{Hart}, which states that if $X$ is smooth, $Y$ and $Z$ are closed subvarieties of $X$, and $W$ is an irreducible component of the set-theoretic intersection $Y\cap Z$ then the intersection multiplicity of $W$ in the intersection product of $Y$ and $Z$ is given by the alternating sum
\begin{equation} 
\sum_{i=0}^{\infty} (-1)^i {\rm length}_{\mathcal{O}_{X,x}} {\rm Tor}^i_{\mathcal{O}_{X,x}} (\mathcal{O}_{X,x}/I(Y), \mathcal{O}_{X,x}/I(Z)),\end{equation} where 
$x$ is the generic point of $W$ and $I(Y)$ and $I(Z)$ are the ideals given by the set of elements in the local ring that vanish respectively at $Y$ and $Z$.  We remark that this sum is finite since all sufficiently high Tor groups are zero.  We observe that the first term in this summation is the length of the tensor product of $\mathcal{O}_{X,x}/I(Y)\otimes\mathcal{O}_{X,x}/I(Z)=\mathcal{O}_{W,x}$. This is what one expects the intersection multiplicity to be when $Y$ and $Z$ do not intersect in a pathological way.  For example, if $X$ is a smooth complex quasiprojective variety and $Y$ and $Z$ are smooth subvarieties that intersect transversally (i.e., for every $x\in Y\cap Z$ we have $T_xY+T_xZ = T_xX$) then this is exactly what occurs.  In particular, a non-transverse intersection can be detected by the non-vanishing of some higher Tor group appearing in Serre's formula. Thus we see that there is an intimate connection between the vanishing of higher Tor groups and the subvarieties intersecting in an agreeable, or generic, manner.

We say that subschemes $Y$ and $Z$ of an ambient scheme $X$ are \emph{homologically transverse} if the sheaf $\shTor_j^X(\mathcal{O}_Y,\mathcal{O}_Z)=0$ for $j\ge 1$.  In light of the above discussion, we see that one can intuitively think of two subschemes being homologically transverse as saying that their intersection product is what one would naively guess:  the length of the scheme-theoretic intersection. 
  For example, a point $x$ and a proper subvariety $Y$ of an irreducible variety $X$ are homologically transverse if and only if $x\notin Y$; curves in $\mathbb{P}^3$ are homologically transverse if and only if they do not intersect; and two irreducible hypersurfaces in $\mathbb{P}^n$ are homologically transverse if and only if they are not equal.

Taking this perspective, one can now extend the dynamical Mordell-Lang conjecture to the more general setting where one considers two subvarieties $Y$ and $Z$ of an ambient complex quasiprojective variety $X$.  If one has an endomorphism $\Phi$ of $X$ then one would like to understand the set of natural numbers $n$ for which the Zariski closure of $\Phi^n(Y)$ and $Z$ are homologically transverse.  In the case where $Y$ is a point, the original dynamical Mordell-Lang conjecture states that the set of $n$ for which $\Phi^n(Y)$ and $Z$ fail to be homologically transverse is a finite union of infinite arithmetic progressions along with a finite set.  It is natural to expect that this phenomenon extends to the general setting where $Y$ is no longer a point. Moreover, identifying a subvariety $Y$ with its structure sheaf $\O_Y$, we can view the conjecture as a statement about coherent sheaves. Just as in the setting of the original dynamical Mordell-Lang conjecture, we expect that in this more general setting, one only gets infinite arithmetic progressions due to a compelling geometric reason.  We thus make the following conjecture:

\begin{conj}\label{conj: BSS}
Let $X$ be a quasiprojective variety over an algebraically closed field $k$ of characteristic zero, and let $\sigma: X\to X$ be an endomorphism of $X$.  If $\mathcal{M}$ and $\mathcal{N}$ are coherent sheaves on $X$ then for each $i\ge 1$, the set of natural numbers $n$ for which
$$\shTor^X_i((\sigma^n)^*\mathcal{M},\mathcal{N})\neq 0$$ is a finite union of infinite arithmetic progressions up to addition and removal of a finite set.
\end{conj}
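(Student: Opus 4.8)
The plan is to deduce the conjecture from the affinoid analogue advertised in the abstract, and to prove that analogue by $p$-adic interpolation of the dynamics together with the module-theoretic Strassman theorem of the appendix. The cases I expect to get this way are the ones the paper singles out: $X$ a surface with an automorphism, and $X$ carrying an algebraic group action containing $\sigma$; the general (endomorphism, arbitrary $X$) case I do not expect to reach.

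First I would reduce to the situation where $\sigma$ is an automorphism and the relevant data live over a $p$-adic base. Since $\shTor^X_i((\sigma^n)^*\M,\N)$ is coherent and we only care whether it vanishes, it is enough to control its support $\sigma^{-n}(\supp\M)\cap\supp\N$. So I would spread $(X,\sigma,\M,\N)$ out over a finitely generated $\ZZ$-subalgebra of $k$ and reduce modulo a carefully chosen prime $\mathfrak p$, arranging — after replacing $\sigma$ by a power $\sigma^a$ — that the reduction of $\sigma^a$ acts as the identity on the special fibre, i.e. $\sigma^a$ lies in the "residual identity" congruence subgroup of the automorphisms of a model over the ring of integers $\O$ of the completion. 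This is exactly where the geometric input enters: for a surface one uses the structure of its automorphism group to produce such a $\mathfrak p$, and in the algebraic-group case one instead uses that $g\mapsto g^n$ is $p$-adic analytic on a sufficiently small subgroup of $G(\QQ_p)$. Because $\sigma^a$ is now in the congruence subgroup it preserves residue tubes, so for $n$ in a fixed residue class mod $a$ the support above stays inside one fixed affinoid $\X$, and by flat base change the vanishing of $\shTor^X_i((\sigma^n)^*\M,\N)$ can be tested on $\X$.

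Next I would carry out the interpolation on $\X$. The automorphisms $\sigma^{an}$ assemble into a $p$-adic analytic family, i.e. a morphism $\X\widehat\otimes\O\{t\}\to\X$ restricting to $\sigma^{an}$ at $t=n$; pulling back $\M$ along it produces a coherent sheaf $\widetilde\M$ on $\X\widehat\otimes\O\{t\}$ with fibre $(\sigma^{an})^*\M$ at each integer $n$. Choosing a finite free resolution of $\widetilde\M$ over the affinoid algebra $A\{t\}$ (with $A$ the affinoid algebra of $\X$) and tensoring with $\N\widehat\otimes\O\{t\}$, the homology $H_i$ is a finitely generated $A\{t\}$-module whose specialization at $t=n$ computes $\shTor^{\X}_i((\sigma^{an})^*\M,\N)$ (the base-change spectral sequence contributes only finitely generated correction terms, treated identically). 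The question has thus become: for which $t_0$ in the closed unit disc is the fibre of a fixed finitely generated $A\{t\}$-module nonzero? The module-theoretic Strassman theorem answers this — the set of such $t_0$ is either the whole disc or finite. Hence $\{n : \shTor^{\X}_i((\sigma^{an})^*\M,\N)\neq 0\}$ is all of $\NN$ or finite; taking the union over the finitely many residues mod $a$ and translating back through the models gives a finite union of infinite arithmetic progressions up to a finite set.

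The main obstacle is the construction in the middle step: producing a prime $\mathfrak p$ and an $\O$-model on which a power of $\sigma$ genuinely lands in the congruence subgroup on an affinoid carrying $\supp\N$, and then building the interpolating sheaf $\widetilde\M$ and checking its fibres are the honest pullbacks $(\sigma^{an})^*\M$. The difficulty is that $\sigma$ need not extend to any ambient linear or group action, so the analytic family $n\mapsto\sigma^{an}$ must be produced by hand — which is why the argument runs smoothly precisely when $X$ is a surface (automorphism groups understood) or when $\sigma$ sits inside an algebraic group acting on $X$ (the group law supplies the interpolation), and why the full conjecture stays open.
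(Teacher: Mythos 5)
Your proposal captures the broad spirit of the paper --- spread out, find a prime where a power of $\sigma$ is residually trivial, $p$-adically interpolate the iterates, and invoke the module-theoretic Strassman theorem --- and you correctly note that only the two cases of Theorem~\ref{thm: main1} are within reach. But the route you describe diverges from the paper's in several substantive ways, and one step is wrong as stated.

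First, the dichotomy you ascribe to the module-theoretic Strassman theorem --- that the set of specializations with nonzero fibre is ``either the whole disc or finite'' --- is not what Theorem~\ref{theorem: strassman} says. It says the \emph{vanishing} set is open in $R$, and when $R$ is compact it is, up to finite sets, a union of $\varepsilon$-balls; for $R=\ZZ$ this means a finite union of arithmetic progressions of difference $p^r$. The nonvanishing set is likewise a union of balls, not ``all or finite.'' The arithmetic progressions in the conjecture's conclusion come directly from this ball structure, and your cleaner dichotomy does not hold (your final conclusion happens to be salvageable, but for the wrong reason). Second, for the surface case the paper never pushes $\supp\M\cup\supp\N$ into a single affinoid and interpolates there. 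The affinoid/Strassman machinery enters only \emph{locally}, via Corollary~\ref{cor: application}, applied to the completed local ring $\widehat{\O_{X,p}}$ at the finitely many $\sigma$-periodic points $p$; it is in proving that corollary that one spreads out, picks a good maximal ideal $Q$ of a finitely generated $\ZZ$-algebra, and passes to $F[[x_1,\ldots,x_d]]$. The non-periodic part of the support is handled entirely by classical algebraic geometry: reduce to sheaves with $1$-dimensional support (Lemma~\ref{lem: torsion free}), characterize nonvanishing of $\shTor_1$ on a smooth surface by containment of supports (Proposition~\ref{prop: lichtenbaum}, via Lichtenbaum and Auslander), and apply the Bell--Ghioca--Tucker DML theorem to the orbits of the resulting points and curves. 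The hypothesis $\dim X=2$ is used in this homological step --- \emph{not}, as you suggest, in any structure theory of surface automorphism groups, which the paper never invokes. Third, the algebraic group case (Section~\ref{sec:Sue}) uses no $p$-adic analysis at all: Proposition~\ref{prop:genericbehaviour} is a Kleiman--Bertini-style generic flatness argument on $G\times X$, producing a dense open $V\subseteq G$ on which $\shTor_i^X(g^*\M,\N)$ is identically zero or identically nonzero, after which Zariski density of the cyclic group and the BGT theorem for $g\mapsto \sigma g$ finish the proof. Your plan to interpolate via $p$-adic analytic subgroups of $G(\QQ_p)$ is a different (and unnecessary) approach here.
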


This generalizes an earlier conjecture of the third-named author \cite[Conjecture~5.15]{geometric-idealizer}, which dealt with the case where $\shTor^X_i((\sigma^n)^*\mathcal{M},\mathcal{N})$ vanishes generically.

We remark that Conjecture~\ref{conj: BSS} really lies in the intersection of work around the dynamical Mordell-Lang conjecture and the field of unlikely intersections.  To illustrate this, we give the following special case, which was inspired by discussions with Dragos Ghioca and Joe Silverman.  Let $X=(\mathbb{C}^*)^3$ and let $\Phi: X\to X$ be the map $(x,y,z)\mapsto (x^2,y^3,z^5)$.  If $Y$ is the rational curve $\{(t,t,t)\colon t\in \mathbb{C}^*\}$ and $Z$ is a curve given by the zero set of two polynomials $A(x,y,z)$ and $B(x,y,z)$, then $\Phi^n(Y)$ and $Z$ are not homologically transverse if and only if there is some $c\in \mathbb{C}^*$ such that $A(c^{2^n}, c^{3^n},c^{5^n}) = B(c^{2^n}, c^{3^n},c^{5^n})=0$.  Thus, although Conjecture \ref{conj: BSS} is inspired in part by the dynamical Mordell-Lang conjecture, one often must ultimately deal with problems related to ``unlikely intersections'' to obtain the desired conclusion.\footnote{For the experts, we remark that an unlikely intersection in the sense of being non-homologically transverse is similar in spirit, but not equivalent to, an unlikely intersection in the sense of Zannier \cite{zannier}. In the latter sense, we say subvarieties $Y$ and $Z$ of $X$ intersect {\em properly} if every component of $Y \cap Z$ has the expected dimension of $\max(\dim Y + \dim Z - \dim X, 0)$ and otherwise say that $Y$ and $Z$ have {\em unlikely intersection}. As shown in the proof of \cite[Lemma 42.16.1]{Stacks}, if $X$ is non-singular, and $Y$ and $Z$ are Cohen-Macaulay and intersect properly, then $Y$ and $Z$ are homologically transverse. On the hand, if $Y$ is a local complete intersection and $Y$ and $Z$ are homologically transverse, then $Y$ and $Z$ have proper intersection. This follows by looking at the Koszul resolution $K_*$ of $Y$, tensoring with $\O_Z$, and noting that $\Tor_{\geq 1}(\mathcal{O}_Y, \mathcal{O}_Z) = 0$ if and only if the higher homology of $K_* \otimes_{\mathcal{O}_X} \mathcal{O}_Z$ vanishes. By \cite[Lemma 15.27.7]{Stacks}, this implies that the equations locally cutting out $Y$ give a regular sequence on $Z$, which implies that $Y$ and $Z$ have proper intersection.

In particular if $Y$ is a local complete intersection, $Z$ is Cohen-Macaulay, and $X$ is smooth, then $Y$ and $Z$ have proper intersection if and only if they are homologically transverse. For subvarieties which are not Cohen-Macaulay, we can have proper intersection without being homologically transverse, see \cite[Example 42.14.4]{Stacks}.}

We note that the above example only deals with structure sheaves of subvarieties of the ambient variety, and geometrically this is arguably the most interesting case of our conjecture. The reason we have formulated our conjecture more generally in terms of coherent sheaves rather than just structure sheaves of subvarieties is that when computing Tor groups of structure sheaves it is often very useful to work with exact sequences involving coherent sheaves that are not structure sheaves. Said another way, we imagine that any (inductive) proof of our conjecture for the case of structure sheaves will naturally lead one to consider the coherent sheaf formulation.

We hope the above example, which is obviously a very special case of the conjecture, gives some underpinning to our belief that the conjecture is very difficult in general.  Indeed, the dynamical Mordell-Lang conjecture is already a hard question, but in this higher-dimensional variant one must also now wrestle with difficult questions involving unlikely intersections.  In this paper, we restrict our attention to the case where $\sigma$ is an automorphism of $X$.  In this setting we can prove Conjecture \ref{conj: BSS} in two cases: when ${\rm dim}(X)\le 2$ or $\sigma$ acts vis an algebraic group.

\begin{theorem}
\label{thm: main1}
Let $k$ be an algebraically closed field of characteristic zero, let $X$ be a smooth quasiprojective variety over $k$, and let $\sigma: X\to X$ be in ${\rm Aut}_k(X)$.  Assume that at least one of the following holds:
\begin{enumerate}
\item $X$ is a surface; or 
\item $\sigma$ lies in an algebraic group acting as $k$-rational automorphisms of $X$.
\end{enumerate}
If $\mathcal{M}$ and $\mathcal{N}$ are coherent sheaves on $X$ then for each $i\ge 1$ the set of $n\in\ZZ$ for which
$$\shTor^X_i((\sigma^n)^*\mathcal{M},\mathcal{N})\neq 0$$ is a finite union of doubly infinite arithmetic progressions up to addition and removal of finite sets.
\end{theorem}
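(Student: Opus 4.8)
The plan is to reduce the global statement to a local, affinoid statement and then invoke the affinoid analogue of the conjecture together with the module-theoretic Strassman theorem from the appendix. First I would reduce to the case where $\sigma$ embeds in an algebraic group: in case (2) this is the hypothesis, and in case (1) one uses the classification of surface automorphisms (after passing to a suitable birational or equivariant model) to show that either a power of $\sigma$ lies in a connected algebraic group $G$ acting on $X$, or else $\sigma$ acts on a surface of a very constrained type (e.g.\ with $\sigma$ of infinite order giving positive entropy), which can be handled directly. Since passing from $\sigma$ to a power $\sigma^a$ only refines the arithmetic progressions by a factor of $a$, and since doubly-infinite progressions are stable under such refinement, this reduction is harmless for the conclusion.

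Next, assuming $\sigma \in G$ with $G$ an algebraic group acting on $X$, I would set up a $p$-adic interpolation of the orbit $n \mapsto (\sigma^n)^*\M$. Choosing a prime $p$ of good reduction and a finitely generated $\ZZ$-subalgebra $R \subseteq k$ over which $X$, $\sigma$, $\M$, $\N$ are all defined, the one-parameter subgroup $n \mapsto \sigma^n$ of $G$ extends, after replacing $\sigma$ by a congruence power $\sigma^{a}$ with $a = p^N(p-1)$ or similar, to an analytic map $\ZZ_p \to G(\ZZ_p)$. Pulling back to a suitable affine chart $\spec A$ of $X$ and completing, one gets a family of $A \hat\otimes \ZZ_p$-modules (or rather, a module over an affinoid algebra) interpolating the $(\sigma^n)^* \M$, and correspondingly the Tor-groups $\shTor_i^X((\sigma^n)^*\M, \N)$ become the specializations at integers $n \in \ZZ_p$ of a coherent object over the affinoid base. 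The key point is that the support of this interpolated Tor-sheaf, viewed as a function of the $p$-adic parameter, is cut out by the vanishing of finitely many analytic functions on a polydisc.

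The heart of the argument is then to apply the affinoid version of the conjecture (proved earlier in the paper) to this interpolated family: the set of $n \in \ZZ_p$ where the relevant analytic module fails to be ``homologically transverse'' to $\N$ is, by the module-theoretic Strassman theorem, either all of a residue disc or a finite set within each residue disc. Intersecting with $\ZZ$ and unwinding the congruence classes mod $a$ then yields exactly a finite union of doubly-infinite arithmetic progressions up to a finite set. One must be slightly careful to handle all $i \geq 1$ simultaneously — but since only finitely many Tor-sheaves are nonzero (the ambient $X$ is smooth, hence of finite global dimension locally), and $X$ is quasicompact so finitely many affine charts suffice, a finite union of such sets is again of the required form.

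I expect the main obstacle to be the surface case (1) when $\sigma$ does \emph{not} virtually lie in an algebraic group — precisely the positive-entropy automorphisms (e.g.\ on K3 or rational surfaces). There the orbit $(\sigma^n)^*\M$ is genuinely ``transcendental'' and no algebraic-group interpolation is available; one instead needs to exploit that a non-algebraic surface automorphism preserves very little geometry (no invariant curves beyond a computable finite list, by results on the dynamics of surface automorphisms), so that for all but finitely many $n$ the supports of $(\sigma^n)^*\M$ and $\N$ are already in good position and the Tor-sheaves vanish for dimension reasons. Making this uniform — i.e.\ bounding the exceptional $n$ independently of the sheaves in a way compatible with the arithmetic-progression conclusion — is the delicate step, and I anticipate it requires a separate argument intertwining the Néron–Severi action of $\sigma$ with a local analysis of the sheaves along their (finitely many) possible common components.
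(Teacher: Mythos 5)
Your proposal diverges substantially from the paper's argument, and the surface case has a real gap.

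For case (2), the paper does not use any $p$-adic interpolation at all; the proof (Section~\ref{sec:Sue}) is purely algebro-geometric. One resolves $\M$ by locally free sheaves, pulls back to $G\times X$, invokes generic flatness to find a dense open $V\subseteq H$ over which the Tor sheaves are flat, and concludes that $\shTor_i^X(g^*\M,\N)$ is either identically zero or nowhere zero on $V$. Combined with Zariski density of the $\sigma$-orbit of the identity in the closure of $\langle\sigma\rangle$, this gives the all-but-finitely-many conclusion directly. Your global $p$-adic interpolation would run into the problem that $\sigma$ need not preserve an affine chart and need not satisfy the $p$-adic contraction condition $|\sigma(x_i)-x_i|<p^{-c}$ globally; the paper only uses the affinoid machinery (via Corollary~\ref{cor: application}) at a $\sigma$-fixed point of a suitable iterate, where passing to the completion and rescaling $x_i\mapsto x_i/p$ manufactures the needed congruence.

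For case (1), your plan is to reduce to case (2) by classifying surface automorphisms and then handle positive-entropy automorphisms by a separate entropy/N\'eron--Severi argument. This is not what the paper does and, as you acknowledge, it leaves the essential positive-entropy case unresolved; your sketch for that case (``few invariant curves, so Tor vanishes for dimension reasons for all but finitely many $n$'') is not correct as stated: even with no invariant curves, $\shTor_1$ is governed by whether $\sigma^{-n}(V_i)$ is contained in or contains $W_j$, and for a point $V_i$ inside a curve $W_j$ this set can be an infinite arithmetic progression, not a finite set. The paper's actual argument decomposes the problem differently: it first replaces $\sigma$ by an iterate fixing the finitely many $\sigma$-periodic points $p_1,\dots,p_d$ in the supports, handles the local behaviour at those fixed points via Corollary~\ref{cor: application} (where the $p$-adic Strassman machinery is used), and handles the complementary locus by passing to a torsion/torsion-free filtration of $\M$ and $\N$ (Lemma~\ref{lem: torsion free}) and then, for sheaves with one-dimensional support, applying an explicit characterization of when $\shTor_1$ is nonzero in terms of containment of supports (Proposition~\ref{prop: lichtenbaum}, resting on the theorems of Lichtenbaum and Auslander); the resulting containment sets are controlled by the classical dynamical Mordell--Lang theorem for automorphisms. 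The Lichtenbaum/Auslander step and the periodic/non-periodic decomposition are the key ideas missing from your proposal, and without them the surface case does not go through.
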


The second case of Theorem~\ref{thm: main1} follows from the third-named author's work on a general Kleiman-Bertini theorem \cite{S-KB}; this case is proved in \S \ref{sec:Sue}.  The surface case is proved in \S \ref{sec:surface-case}, and relies on  a version of Conjecture \ref{conj: BSS} for affinoid spaces, %in the case where the map $\sigma$ behaves well respect to a reduction map, 
along with the original proof of the cyclic case of the dynamical Mordell-Lang conjecture for \'etale endomorphisms.  The affinoid result is:
%, whose proof is significantly more involved than that of Theorem \ref{thm: main1},

\begin{theorem}
\label{thm: main2}
Let $S=K\langle x_1,\ldots ,x_d\rangle$ be the Tate algebra over $K$. Suppose that $\sigma: S\to S$ is a $K$-algebra automorphism satisfying $|\sigma(\overline{x}_i)- \overline{x}_i| < p^{-c}$ for some $c>1/(p-1)$ and for $i=1,\ldots ,d$, where $\overline{x}_i$ denotes the image of $x_i$ in $S$. If $M$ and $N$ are finitely-generated $S$-modules and $i\geq1$, then the set of $n\in\ZZ$ for which $${\rm Tor}_i^S((\sigma^n)^*M, N)\neq 0$$ is, up to addition and removal of a finite set, a finite union of arithmetic progressions of difference $p^r$ for some $r\geq0$.
%\matt{Jason now has a counter-example to this statement: $K=\QQ_5$. $M=\QQ_5\<x,y\>/(5-x-y)$ and $N=\QQ_5\<x,y\>/(x-y)$, and $\sigma:(x,y)\mapsto(6^{-1}x,y)$. We should include his example. But we think it's still true that the set of $n\in\ZZ$ will be a finite union of arithmetic progressions up to addition and removal of a finite set. That's all we nede to prove our conjecture for surfaces.}
\end{theorem}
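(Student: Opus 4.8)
The plan is to promote $\sigma$ to a $p$-adic analytic one-parameter family of automorphisms, to rewrite the relevant $\Tor$ group as $\Tor_1^S(L^{(n)},N)$ for a finitely generated $S$-module $L^{(n)}$ whose presentation varies analytically with $n$, and then to feed the resulting nonvanishing locus into the module-theoretic variant of Strassman's theorem proved in the appendix. For the first step, note that the bound on $\sigma$ at the coordinates propagates to all of $S$: writing $f=\sum_\alpha c_\alpha\overline{x}^{\,\alpha}$ and telescoping over monomials gives $\|\sigma(f)-f\|\le p^{-c}\|f\|$, so $\sigma-\id$ is a bounded $K$-linear operator on $S$ of operator norm $\le p^{-c}<p^{-1/(p-1)}$. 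Hence $\tau:=\log\sigma=\sum_{k\ge1}(-1)^{k-1}(\sigma-\id)^k/k$ converges in operator norm, $\exp(\tau)=\sigma$, and $\|\tau\|\le p^{-c}$. The strict inequality $c>1/(p-1)$ is precisely what makes $\exp(n\tau)(f)=\sum_{k\ge0}n^k\tau^k(f)/k!$ converge for $n$ in the closed unit disk, since its $k$-th term has norm at most $|n|^k\,p^{-k(c-1/(p-1))}\|f\|$. Thus $n\mapsto\sigma^n$ extends to a map $\ZZ_p\to\aut_K(S)$ — indeed to a disk of radius slightly larger than $1$ — and for each $f\in S$ the function $n\mapsto\sigma^n(f)$ is analytic with power-series coefficients $\tau^k(f)/k!\in S$. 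Because the $\sigma^n$ with $n\in\ZZ$ are $K$-algebra automorphisms, $\ZZ$ is dense in $\ZZ_p$, and $n\mapsto\sigma^n(fg)-\sigma^n(f)\sigma^n(g)$ is continuous, every $\sigma^n$ with $n\in\ZZ_p$ is a $K$-algebra automorphism (equivalently, $\tau$ is a $K$-derivation of $S$); and all of them preserve $\mathcal{O}_K\langle x_1,\dots,x_d\rangle$ since $\|\sigma^n(f)\|\le\|f\|$.

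For the second step, use that $S$ is regular, hence of finite global dimension $d$ (for $i>d$ the $\Tor$ vanishes identically and there is nothing to prove). Choose a finite free resolution $P_\bullet\to M$ with $P_j=S^{a_j}$ and $j$-th differential a matrix $A_j$ over $S$; then for every $n\in\ZZ_p$ the complex $(\sigma^n)^*P_\bullet$, whose $j$-th differential is $\sigma^n(A_j)$ applied entrywise, is a finite free resolution of $(\sigma^n)^*M$. Splicing in syzygies yields the usual dimension shift
\[
\Tor_i^S\big((\sigma^n)^*M,N\big)\ \cong\ \Tor_1^S\big(L^{(n)},N\big),\qquad L^{(n)}:=\operatorname{coker}\big(\sigma^n(A_i)\colon S^{a_i}\to S^{a_{i-1}}\big).
\]
The crucial feature is that $L^{(n)}$ is a finitely generated $S$-module carrying the presentation matrix $\sigma^n(A_i)$, all of whose entries are convergent power series in $n$. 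Hence $\{n\in\ZZ:\Tor_i^S((\sigma^n)^*M,N)\ne0\}$ is the set of $n\in\ZZ$ at which an analytically-varying $\Tor$-module fails to vanish; computing $\Tor_1^S(L^{(n)},N)$ via a fixed finite free resolution of $N$ realises it as the homology of a bounded complex of analytically-varying finitely presented $S$-modules.

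For the third step, this nonvanishing locus is exactly of the shape handled by the appendix's module-theoretic variant of Strassman's theorem: up to a finite set it is described by finitely many conditions, each asserting that a finite system of analytic functions of $n$ with values in $S$ has a common zero on $\operatorname{Max}(S)$ — equivalently, that some $p$-adic valuation inequality among analytic functions of $n$ holds. (That such loci can genuinely be nontrivial arithmetic progressions, rather than merely finite or all of $\ZZ$, is already visible over $S=K\langle x\rangle$: the module $S/(n+px)$ is nonzero precisely when $n\in p\ZZ$, because $n+px$ has a zero in the closed unit disk exactly when $p\mid n$.) The module-theoretic Strassman theorem shows that each such locus, intersected with $\ZZ\subseteq\ZZ_p\subseteq\mathcal{O}_K$, is — up to a finite set — a finite union of arithmetic progressions with a common difference $p^r$; taking the union over the finitely many conditions and replacing the exponents by their maximum gives the theorem.

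The technical heart of the argument is the module-theoretic Strassman theorem itself, which is why it is relegated to the appendix; on the side of the reduction above, the delicate step is recasting the homological condition "$\Tor_1^S(L^{(n)},N)\ne0$" as an admissible input for it. Concretely one wants to understand the image, in the parameter disk, of the support of the relative $\Tor$-module formed over the Tate algebra $S\langle n\rangle$ in the extra analytic variable $n$; the essential phenomenon, and the source of the exponent $p^r$, is that such images are finite unions of closed sub-disks (up to finite sets) rather than finite sets, as the example $S/(n+px)$ illustrates. The remaining ingredients — the norm estimate, the convergence of $\log$ and $\exp$ (where $c>1/(p-1)$ is essential), the fact that $\tau$ is a derivation, the dimension shift, and the existence of finite free resolutions over the Tate algebra — are routine.
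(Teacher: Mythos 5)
Your analytic interpolation of $n\mapsto\sigma^n$ via $\tau=\log\sigma$ and $\exp(n\tau)$ is a legitimate alternative to the binomial (Mahler/Bell--Poonen) interpolation $\sigma^z(b)=\sum_m\binom{z}{m}\Delta^m(b)$ that the paper uses; the two produce the same one-parameter family of automorphisms, and the hypothesis $c>1/(p-1)$ plays the same role in either presentation. The dimension shift to $\Tor_1$ is harmless but unnecessary. So the setup is sound and agrees in spirit with the paper's Section 2.

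The gap is in what you call "the delicate step" and then do not carry out. You correctly see that one should package the data into a module over the two-variable-plus Tate algebra $S\langle z\rangle$ (your $S\langle n\rangle$) and invoke the module-theoretic Strassman theorem. But the Strassman theorem, as proved in the appendix, is a statement about a \emph{single} finitely generated $S\langle z\rangle$-module $\mathcal{T}$ and the set of $c$ with $\mathcal{T}|_{z=c}=(0)$. To apply it with $\mathcal{T}=\Tor_i^{S\langle z\rangle}(\M(z),\N(z))$ you must first prove that
\[
\Tor_i^{S\langle z\rangle}(\M(z),\N(z))\big|_{z=a}\ \cong\ \Tor_i^{S}(\M(a),N)
\]
for all but finitely many $a\in\ZZ_p$, so that "the nonvanishing locus of $\Tor_i^S((\sigma^n)^*M,N)$" and "the nonvanishing locus of the specialization of the global $\Tor$-module" actually agree up to a finite set. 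This is Proposition~\ref{prop: half2_now-more-general} in the paper, and it is not routine: tensoring with $S\langle z\rangle/(z-a)$ is a right-exact but not exact operation, so specialization does not commute with kernels or homology in general. The paper handles this in Section~\ref{subsec:specialization-arg} with a primary-decomposition/saturation argument (Lemma~\ref{l:sat-primary-decomp-general}, Corollary~\ref{cor:sat-primary-decomp-Tate}) showing that for all but finitely many $a$ the relevant modules are saturated with respect to $z-a$, whence kernels and hence homology specialize correctly (Lemma~\ref{l:coh-and-specialization}, Corollary~\ref{cor:exact->exact}). It also needs the nontrivial fact, cited from Kedlaya, that finitely generated $S\langle z\rangle$-modules admit finite \emph{free} resolutions, so that one has a complex of free modules whose specialization at $z=a$ is again a resolution of $N$ for almost all $a$. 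Your outline replaces all of this with the phrase "one wants to understand the image, in the parameter disk, of the support of the relative $\Tor$-module," which neither states nor proves the required compatibility of $\Tor$ with specialization. Until that base-change statement is established, the appeal to the module-theoretic Strassman theorem does not yet apply to the set you actually care about. Everything else you list as "routine" (the norm estimates, the derivation property of $\tau$, the dimension shift) genuinely is; the specialization step is the missing content.
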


 In fact, we prove Theorem \ref{thm: main2} by establishing a more general result, which we state momentarily. %One easily reduces to the case where $L=(0)$ by lifting $\sigma$.
 In \S \ref{sec:p-adic-family}, using an analytic arc theorem we show that there is an $S\langle z\rangle$-module $\mathcal{M}$ with the property that $\mathcal{M}\otimes S\<z\>/(z-n) =(\sigma^n)^*M$ for all $n\in\ZZ$. %, where $\mathcal{M}\big|_{z=n}$ denotes the module of $S$ formed by specializing $z$ at $n$.
In other words, we construct a $p$-adic family of modules $\M$ that interpolates between the iterates $(\sigma^n)^*M$, see Definition \ref{def:p-adic-family-ideals}. Making use of some technical results in \S \ref{subsec:specialization-arg}, we then prove the following result in \S \ref{sec:proof-main-thm2}, which clearly implies Theorem \ref{thm: main2}.

\begin{theorem}
\label{thm: main3}
Adopt the assumptions of Theorem \ref{thm: main2}. Then there exist finitely generated $S\<z\>$-modules $\M$ and $\N$ with the following two properties. 
%, let $\M(z)$ be as in Definition \ref{def:p-adic-family-ideals}, and let $\N(z)=S\<z\>\otimes_S N$.
For $i\geq1$, if $\Tor_i^{S\ang{z}}(\M,\N)=0$, then $\Tor_i^S((\sigma^n)^*M,N)=0$ for all but finitely many $n\in\ZZ$; if $\Tor_i^{S\ang{z}}(\M,\N)\neq0$, then the set of $n\in\ZZ$ for which $\Tor_i^S((\sigma^n)^*M,N)\neq0$ is, up to addition and removal of a finite set, a finite union of arithmetic progressions of difference $p^r$ for some $r\geq0$.
\end{theorem}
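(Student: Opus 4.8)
The plan is to realize $\mathcal{M}$ and $\mathcal{N}$ as families over the parameter $z$, to translate the non-vanishing of $\Tor_i^S((\sigma^n)^*M,N)$ into a statement about the fibre at $z=n$ of a coherent module over $S\langle z\rangle$, and then to control that statement as $n$ ranges over $\ZZ$ by combining an associated-primes argument with the module-theoretic analogue of Strassman's theorem \cite{strassman} proved in the appendix. Concretely, I would take for $\mathcal{M}$ the $p$-adic interpolating family constructed in \S\ref{sec:p-adic-family}: a finitely generated $S\langle z\rangle$-module, torsion-free in the parameter (so that $z-n$ is a non-zero-divisor on $\mathcal{M}$ for every $n$), with $\mathcal{M}\otimes_{S\langle z\rangle}S\langle z\rangle/(z-n)\cong(\sigma^n)^*M$; the hypothesis $|\sigma(\overline{x}_i)-\overline{x}_i|<p^{-c}$, $c>1/(p-1)$, enters only here, through the analytic arc theorem. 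For $\mathcal{N}$ I would take the constant family $N\otimes_S S\langle z\rangle$, which is finitely generated over the Noetherian ring $S\langle z\rangle$, satisfies $\mathcal{N}\otimes_{S\langle z\rangle}S\langle z\rangle/(z-n)\cong N$, and on which $z-n$ is again a non-zero-divisor (a short check with convergent power series in $z$, using $|n|\le 1$). Choosing a finite free resolution $P_\bullet\to\mathcal{M}$ over $S\langle z\rangle$ and setting $C_\bullet:=P_\bullet\otimes_{S\langle z\rangle}\mathcal{N}$, the complex $C_\bullet$ computes $\Tor_\bullet^{S\langle z\rangle}(\mathcal{M},\mathcal{N})$ and has each $C_j$ a $(z-n)$-torsion-free module, while $C_\bullet/(z-n)C_\bullet\cong\bigl(P_\bullet/(z-n)P_\bullet\bigr)\otimes_S N$ computes $\Tor_\bullet^S((\sigma^n)^*M,N)$, since $P_\bullet/(z-n)P_\bullet$ is a finite free resolution of $(\sigma^n)^*M$ over $S\cong S\langle z\rangle/(z-n)$ (this uses $\Tor_j^{S\langle z\rangle}(\mathcal{M},S\langle z\rangle/(z-n))=0$ for $j\ge 1$, i.e.\ that $z-n$ is a non-zero-divisor on $\mathcal{M}$). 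The algebraic universal coefficient theorem applied to $C_\bullet$ then produces, for each $i\ge 1$ and each $n\in\ZZ$, an exact sequence
\[
0\longrightarrow \Tor_i^{S\langle z\rangle}(\mathcal{M},\mathcal{N})/(z-n)\longrightarrow \Tor_i^S((\sigma^n)^*M,N)\longrightarrow \Tor_{i-1}^{S\langle z\rangle}(\mathcal{M},\mathcal{N})[z-n]\longrightarrow 0,
\]
where $[z-n]$ denotes the $(z-n)$-torsion submodule.

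Everything now reduces to two facts about an arbitrary finitely generated $S\langle z\rangle$-module $T$. First, $\{n\in\ZZ:T[z-n]\neq 0\}$ is finite: $z-n$ is a zero-divisor on $T$ only if it lies in one of the finitely many associated primes $\mathfrak p$ of $T$, and a single prime cannot contain $z-n$ and $z-n'$ for distinct integers $n,n'$, as it would then contain the nonzero integer $n-n'$, a unit of $K\subseteq S\langle z\rangle$ since $K$ has characteristic zero. Applying this with $T=\Tor_{i-1}^{S\langle z\rangle}(\mathcal{M},\mathcal{N})$, the right-hand term of the exact sequence vanishes for all but finitely many $n$, so the set in question agrees, up to a finite set, with $\Sigma:=\{n\in\ZZ:\Tor_i^{S\langle z\rangle}(\mathcal{M},\mathcal{N})/(z-n)\neq 0\}$; in particular $\Sigma=\varnothing$ when $\Tor_i^{S\langle z\rangle}(\mathcal{M},\mathcal{N})=0$, which is the first assertion of the theorem. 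For the second assertion it remains to show, for any finitely generated $S\langle z\rangle$-module $T$, that $\Sigma_T:=\{n\in\ZZ:T/(z-n)T\neq 0\}$ differs by a finite set from a finite union of arithmetic progressions of difference $p^r$, some $r\ge 0$; combined with the exact sequence this yields the second assertion, whence Theorem~\ref{thm: main2} follows at once.

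Establishing this last claim is the main obstacle, and is where I would use the technical results of \S\ref{subsec:specialization-arg} together with the appendix's module-theoretic Strassman theorem. Since $\Sigma_T$ depends only on the reduced support $V(\ann T)$, one reduces to $T=S\langle z\rangle/\mathfrak q$ with $\mathfrak q$ prime, and, after replacing $B:=S\langle z\rangle/\mathfrak q$ by its normalization (which has the same projection to the $z$-line, since that projection is surjective), one uses Noether normalization for affinoid algebras to present $B$ as a finite extension of a Tate algebra $K\langle y_1,\dots,y_e\rangle$ in which the image of $z$ satisfies a monic polynomial $g(t)\in K\langle y_1,\dots,y_e\rangle[t]$; then $\Sigma_T=\{n\in\ZZ: g(n)\text{ is not a unit of }K\langle y_1,\dots,y_e\rangle\}$. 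The module-theoretic form of Strassman's theorem is designed to control exactly such sets: the integer specializations of the parameter at which an affinoid family of monic polynomials takes a non-unit value form, up to finitely many exceptions, the trace on $\ZZ$ of a clopen subset of $\ZZ_p$, equivalently a finite union of congruence classes modulo a power of $p$. Assembling these pieces in \S\ref{sec:proof-main-thm2} gives Theorem~\ref{thm: main3}. I expect this $p$-adic analytic step --- the appendix result itself and the reduction of $\Sigma_T$ to it --- to be the genuinely delicate part; the rest is formal homological algebra and standard commutative algebra over the Noetherian ring $S\langle z\rangle$.
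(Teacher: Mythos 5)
Your overall strategy matches the paper's: realize $(\sigma^n)^*M$ and $N$ as the fibres at $z=n$ of $\M(z)$ (the interpolating family of Definition~\ref{def:p-adic-family-ideals}) and $\N(z)=N\otimes_S S\langle z\rangle$, show that $\Tor_i^{S\langle z\rangle}(\M(z),\N(z))$ specializes to $\Tor_i^S((\sigma^n)^*M,N)$ for all but finitely many $n$, and then invoke the module-theoretic Strassman theorem of the appendix. Where you genuinely diverge is in the homological middle step. The paper resolves $\N(z)$ by a finite free complex and applies Corollary~\ref{cor:exact->exact} twice (cohomology commutes with specialization for cofinitely many $a$, proved via the saturation and primary-decomposition argument of Lemma~\ref{l:sat-primary-decomp-general}); you instead resolve $\M$, tensor with $\N$, and use the length-one Koszul resolution of $S\langle z\rangle/(z-n)$ to produce the universal-coefficient short exact sequence
\[
0\to \Tor_i^{S\langle z\rangle}(\M,\N)/(z-n)\to \Tor_i^S((\sigma^n)^*M,N)\to \Tor_{i-1}^{S\langle z\rangle}(\M,\N)[z-n]\to 0,
\]
killing the right-hand term for almost every $n$ by counting associated primes. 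This is a slicker package that avoids most of \S\ref{subsec:specialization-arg}, relying only on the finiteness of associated primes, which is in substance the same input as the paper's saturation lemma. One imprecision: you assert that the interpolating family $\M(z)$ is torsion-free in $z$, so that $z-n$ is a non-zero-divisor on $\M(z)$ for \emph{every} $n$. Nothing in Definition~\ref{def:p-adic-family-ideals} guarantees this; the presented module could a priori have an embedded prime containing some $z-n$. What is true, and what your argument actually requires (the conclusion being only up to finite sets), is that $z-n$ is a non-zero-divisor on $\M(z)$ for all but finitely many $n$, by exactly the associated-primes argument you already give for $T[z-n]$; with that correction the short exact sequence and the identification of $C_\bullet/(z-n)C_\bullet$ with a complex computing $\Tor_\bullet^S((\sigma^n)^*M,N)$ hold for cofinitely many $n$, which suffices. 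Finally, the closing Noether-normalization sketch is unnecessary: Theorem~\ref{theorem: strassman} already states that $\{n\in\ZZ:T/(z-n)T=0\}$ is, up to finite sets, a finite union of progressions of difference $p^r$, and the complement in $\ZZ$ of such a set is again one, so you can cite it directly.
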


Finally, as mentioned earlier, all of these results rely on our variant of Strassman's theorem for modules, which we prove in the appendix:

\begin{theorem}
Let $K$ be a field with a non-Archimedean absolute value $|\, \cdot \,|$ such that $|p|=1/p$, and let $R$ be a subring of the valuation ring of $K$. If $\mathcal{M}$ is a finitely generated $K\langle x_1,\ldots ,x_d,z\rangle$-module, then the set of $c\in R$ for which $\mathcal{M}|_{z=c}=(0)$ is open in $R$. If $R$ is compact, then there exists $\varepsilon>0$ such that up to the addition and removal of finite sets, the set of $c\in R$ for which $\mathcal{M}|_{z=c}=(0)$ is a union of balls in $R$ of radius $\varepsilon$.

In particular, if $R=\mathbb{Z}$, then up to addition and removal of finite sets, the set of $c\in R$ for which $\mathcal{M}|_{z=c}=(0)$ is a finite union of arithmetic progressions with difference $p^r$ for some $r\ge 0$.
%\label{theorem: strassman}
\end{theorem}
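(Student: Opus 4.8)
The plan is to reduce everything to the surjectivity of a matrix over the Tate algebra $T:=K\langle x_1,\dots ,x_d\rangle$, and then to analyse that by combining an elementary ultrametric estimate with Noether normalization for affinoid algebras (write $\underline x=(x_1,\dots ,x_d)$). Since $K\langle\underline x,z\rangle$ is Noetherian, fix a finite presentation $K\langle\underline x,z\rangle^{a}\xrightarrow{\,A\,}K\langle\underline x,z\rangle^{b}\to\mathcal M\to 0$. For $c$ in the valuation ring the substitution $z\mapsto c$ is a continuous $K$-algebra map $K\langle\underline x,z\rangle\to T$, and right exactness gives $T^{a}\xrightarrow{A(c)}T^{b}\to\mathcal M|_{z=c}\to 0$; hence $\mathcal M|_{z=c}=0$ iff $A(c)$ is surjective, iff the zeroth Fitting ideal of its cokernel is all of $T$, iff the ideal of $T$ generated by the $b\times b$ minors $f_1(c),\dots ,f_m(c)$ of $A(c)$ is the unit ideal, where $f_1,\dots ,f_m\in K\langle\underline x,z\rangle$ are the $b\times b$ minors of $A$ (forming minors commutes with $z\mapsto c$). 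Equivalently, $\mathcal M|_{z=c}=0$ iff $c\notin\pi(\operatorname{Supp}\mathcal M)$, where $\pi\colon\operatorname{Sp}K\langle\underline x,z\rangle\to\operatorname{Sp}K\langle z\rangle$ is the projection. Openness of $\{c:\mathcal M|_{z=c}=0\}$ in the valuation ring, hence in $R$, is then immediate and needs no compactness: if $1=\sum_j h_j f_j(c_0)$ with $h_j\in T$, then writing $f_j=\sum_n f_{j,n}z^n$ and using $|c^n-c_0^n|\le|c-c_0|$ for $|c|,|c_0|\le 1$ gives $\|f_j(c)-f_j(c_0)\|\le|c-c_0|\,\|f_j\|$ in the Gauss norm, so for $|c-c_0|$ small $\sum_j h_j f_j(c)$ is $1$ plus an element of norm $<1$, hence a unit of $T$, whence $\mathcal M|_{z=c}=0$.

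For the statement over compact $R$ it suffices, since balls of a fixed small radius partition $R$, to show that $W:=R\cap\pi(\operatorname{Supp}\mathcal M)$ is, up to a finite set, a finite union of balls of some fixed radius $\varepsilon$. Decomposing $\operatorname{Supp}\mathcal M$ into its finitely many irreducible components, I may assume $\operatorname{Supp}\mathcal M=\operatorname{Sp}B$ with $B=K\langle\underline x,z\rangle/\mathfrak p$ a domain, and I choose a Noether normalization $K\langle\underline y\rangle:=K\langle y_1,\dots ,y_e\rangle\hookrightarrow B$ (a finite injection). Then $z$ is integral over the normal domain $K\langle\underline y\rangle$, so satisfies a monic $F(T)=T^{N}+\sum_{i<N}h_i(\underline y)T^{i}$ with $h_i\in K\langle\underline y\rangle$; since $|z|\le 1$ on $\operatorname{Sp}B\subseteq\mathbb B^{d+1}$, the maximum modulus principle forces $h_i\in\mathcal O_K\langle\underline y\rangle$. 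A point $c$ lies in $\pi(\operatorname{Sp}B)$ exactly when $c$ is an eigenvalue of multiplication by $z$ on $B\otimes\kappa(\eta)$ for some $\eta\in\mathbb B^{e}$, that is, a root of $F(\,\cdot\,,\eta)$; combined with the Nullstellensatz for Tate algebras this identifies $\pi(\operatorname{Sp}B)$ with the non-unit locus $\{c:F(c,\,\cdot\,)\notin K\langle\underline y\rangle^{\times}\}$. The standard criterion that $g\in K\langle\underline y\rangle$ is a unit iff $|g(\underline 0)|>\|g-g(\underline 0)\|$ then turns this into a single valuation inequality: $W=R\cap\{c:|\phi(c)|\le\|F(c,\,\cdot\,)-\phi(c)\|\}$, where $\phi(T):=F(T,\underline 0)\in\mathcal O_K[T]$ is monic of degree $N$ and $\|F(c,\,\cdot\,)-\phi(c)\|$ is a supremum of absolute values of polynomials in $c$ of degree $<N$ with coefficients in $\mathcal O_K$, bounded above by $\max_{i<N}\|h_i-h_i(\underline 0)\|\,|c|^{i}$. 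Since $\phi$ has only finitely many roots and the right-hand side is, on suitable balls, a constant times a power of $|c|$ or of a distance $|c-\theta|$ to a root, an elementary Newton-polygon analysis shows this set is, up to a finite set, a finite union of balls---centred near the roots of $\phi$ and at $0$, together with finitely many annular pieces---all of radius bounded below by a positive number computable from the root separations of $\phi$ and the norms $\|h_i-h_i(\underline 0)\|$. Taking $\varepsilon$ to be the minimum of the finitely many radii arising from the various components yields the compact case; for $R=\ZZ$ one applies this with $R=\ZZ_p$, obtaining $\varepsilon=p^{-r}$, and intersects the balls $a+p^{r}\ZZ_p$ with $\ZZ$ to get arithmetic progressions of difference $p^{r}$.

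The main obstacle is the second step, where two points need care. First, one must justify that $\pi(\operatorname{Sp}B)$ is precisely the non-unit locus of $F(c,\,\cdot\,)$: this rests on the maximum modulus principle (to keep the coefficients of $F$ integral), the Nullstellensatz, and---when $z$ is not a primitive element of the relevant finite extension---replacing the minimal polynomial by the characteristic polynomial of multiplication by $z$, so that every root of $F(\,\cdot\,,\eta)$ genuinely occurs as an eigenvalue. It is exactly this algebraic maneuver that sidesteps the ``escape to the boundary'' phenomenon---illustrated by $\operatorname{Supp}\mathcal M=V(zx-1)$---which would obstruct a naive induction by Weierstrass preparation. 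Second, the concluding analysis of the valuation inequality $|\phi(c)|\le\|F(c,\,\cdot\,)-\phi(c)\|$, though elementary, is where the genuine Strassman-type content resides---that the relevant locus stabilizes modulo a single power of $p$ outside a finite set---and carrying it out requires enough of a case analysis to produce one $\varepsilon$ that works uniformly.
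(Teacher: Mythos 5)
Your overall architecture parallels the paper's closely: reduce the openness statement to a matrix/Gauss-norm estimate, reduce the structural statement to irreducible (prime) pieces, Noether-normalize, and finish by analyzing a single polynomial in $c$. The openness step is fine and essentially identical to the paper's Lemma on openness, just phrased through Fitting ideals instead of directly lifting generators. The reduction to irreducible components of the support is the same as the paper's reduction (via $S\<z\>/\ann\M$ and then a radical/primary decomposition).

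The divergence, and the gap, is in the last step. The paper localizes so that the module is free over the Noether normalization, embeds in matrices, passes to the \emph{minimal} polynomial $f$ of $z$, and then splits into two clean cases: if $f$ has a root in $R$, minimality plus integrality force $f$ linear and the exceptional set is a single point; if $f(R)$ avoids $0$, compactness gives a uniform lower bound $\|f(c)\|>\varepsilon$, and a Taylor-expansion estimate shows $f(c')/f(c)=1+(\text{Gauss norm}<1)$ for $|c-c'|$ small, so the unit locus is a union of balls of a \emph{uniform} radius. No explicit description of the set is attempted — compactness does all the uniformity work. You instead take the characteristic polynomial $F$ (nicely sidestepping the paper's $T$ vs.\ $T_q$ localization subtlety, since $K\<\underline y\>$ is normal and $F$ has integral coefficients), reduce via the Tate-algebra unit criterion to the inequality $|\phi(c)|\leq\|F(c,\cdot)-\phi(c)\|$, and then invoke a ``Newton-polygon analysis.''

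That last step is where genuine content is missing, and as sketched it is also mis-stated. The quantity $\nu(c):=\|F(c,\cdot)-\phi(c)\|=\bigl\|\sum_{i<N}(h_i-h_i(\underline 0))c^i\bigr\|$ is a supremum over \emph{infinitely many} monomials $\underline y^\alpha$ of $|P_\alpha(c)|$, where $P_\alpha(T)=\sum_i (h_i-h_i(\underline 0))_\alpha T^i$; it is not, ball-by-ball, merely a constant times a power of $|c|$, and the resulting locus need not be a union of balls centered near $0$ or the roots of $\phi$. (Already in the paper's own example $B=K\<x,z\>/(xz-1)$ with $y=x+z$ one gets $F=T^2-yT+1$, $\phi=T^2+1$, $\nu(c)=|c|$, and the set $\{|\phi(c)|\le\nu(c)\}$ is the whole unit sphere $|c|=1$, not a collection of balls about $\pm i$.) One can show that $\nu$ is ``ultrametrically Lipschitz'' and hence locally constant off a finite set, and from there a case analysis is plausible, but you have deferred exactly the piece of the argument that carries the theorem's Strassman-type content, while the paper dispatches it in one stroke with compactness plus a Taylor expansion. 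To make your route complete, you would need to actually prove that $\{c\in R:|\phi(c)|\le\nu(c)\}$ is, up to a finite set, a finite union of $\varepsilon$-balls for some uniform $\varepsilon$, and that will require treating $\nu$ as a supremum of polynomial absolute values rather than as a single Newton polygon.
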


%%%%%%%%%%%%%%%%%%%%%%%%%%%%%%%%%%%%%%%
%%%%%%%%%%%

\noindent\textbf{Notation.} Throughout the paper, we frequently denote $(\sigma^n)^*M$ by $M^{\sigma^n}$.\vspace{0.5em}

\noindent\textbf{Acknowledgments.} It is a pleasure to thank Brian Conrad, Dragos Ghioca, Joe Rabinoff, David Rydh, and Joe Silverman for helpful discussions.

\section{Constructing a $p$-adic family of modules}
\label{sec:p-adic-family}
As outlined in the introduction, the first step in proving Theorem \ref{thm: main2} is to construct a $p$-adic family of modules interpolating between the iterates $(\sigma^n)^*M$. That is our goal in this section.

\begin{notation}
Throughout this section and the subsequent two sections we use the following notation and assumptions.
\begin{enumerate}
\item we let $p$ be a prime number and we let $K$ be a field that is complete with respect to a non-Archimedean absolute value $|\,\cdot \, |$ such that $|p|=1/p$; 
\item we let $S=K\<x_i,\dots,x_d\>$ be the ring of convergent power series; %an affinoid $K$-algebra;
\item we let $\sigma:S\to S$ be a $K$-algebra automorphism satisfying $|\sigma(x_i)-x_i|<p^{-c}$ for some $c>1/(p-1)$ for $i=1,\ldots ,d$;
\item we let $\mathfrak{o}$ denote the valuation ring of $K$.
\end{enumerate}
\end{notation}
In particular, $\sigma$ restricts to an $\mathfrak{o}$-algebra automorphism of $\mathfrak{o}\langle x_1,\ldots ,x_d\rangle$.

The following result is immediate from Theorem 1 and Remark 3 of \cite{bjorn}. We include the proof here since we make use of the notation in Lemma \ref{l:sigz-properties}.
\begin{proposition}[{\cite[Theorem 1]{bjorn}}]
\label{prop:affine-Bell-Poonen}
If $c > \frac{1}{1-p}$ and $\sigma(b)\equiv b\ (\bmod\ p^c)$ for all $b\in  \mathfrak{o}\langle x_1,\ldots ,x_d\rangle$, then there exists a map $\sigma^z:  S\to   S\< z\>$ such that $\sigma^z(b)|_{z=n}=\sigma^n(b)$ for all $n\in\mathbb{Z}$.
\end{proposition}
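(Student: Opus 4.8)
The plan is to realise $\sigma^z$ as a $p$-adically convergent binomial series in the operator $\delta:=\sigma-\id$, namely
\[
\sigma^z(b)\ :=\ \sum_{k\ge 0}\binom zk\,\delta^k(b),
\]
the formal expansion of ``$(\id+\delta)^z$''. The point of this choice is that for a non-negative integer $n$ the coefficients $\binom nk$ vanish as soon as $k>n$, so the series collapses to the genuine binomial expansion $(\id+\delta)^n=\sigma^n$; consequently the whole content of the proposition reduces to (i) showing that this series converges to an element of $S\<z\>$ for every $b\in S$, and (ii) propagating the resulting identity $\sigma^z(b)|_{z=n}=\sigma^n(b)$ to \emph{all} $n\in\ZZ$, in particular the negative ones.

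For (i), I would argue as follows. The hypothesis ``$\sigma(b)\equiv b\pmod{p^c}$ for all $b\in\mathfrak o\<x_1,\dots,x_d\>$'' says precisely that $\delta$ is a $K$-linear operator on $S$ of operator norm at most $p^{-c}$ (it is enough to bound it on the unit ball $\mathfrak o\<x_1,\dots,x_d\>$, since $S$ is the Tate algebra), so $\|\delta^k(b)\|\le p^{-ck}\|b\|$. On the other hand $\binom zk=\tfrac1{k!}\,z(z-1)\cdots(z-k+1)$ has numerator a polynomial with integer coefficients (signed Stirling numbers of the first kind), hence Gauss norm $\le 1$ on the closed unit disc, so $\binom zk$ has Gauss norm $\le 1/|k!|$; and Legendre's formula gives $v_p(k!)<k/(p-1)$, i.e.\ $1/|k!|<p^{\,k/(p-1)}$. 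Multiplicativity of the Gauss norm on $S\<z\>$ then bounds the $k$-th term of the series by $p^{-k(c-1/(p-1))}\|b\|$, which tends to $0$ exactly because of the standing assumption $c>1/(p-1)$. Thus $\sigma^z(b)\in S\<z\>$, and we obtain a bounded $K$-linear map $\sigma^z\colon S\to S\<z\>$.

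For (ii), I would use Pascal's rule. Extending $\sigma$ to $S\<z\>$ coefficientwise and applying it to $\sigma^z(b)=\sum_k\binom zk\delta^k(b)$, the identities $\sigma\big(\delta^k(b)\big)=\delta^k(b)+\delta^{k+1}(b)$ and $\binom zk+\binom z{k-1}=\binom{z+1}k$ give, after a reindexing justified by convergence,
\[
\sigma\big(\sigma^z(b)\big)\ =\ \sum_{k\ge0}\binom{z+1}{k}\delta^k(b),
\]
that is, $\sigma\big(\sigma^z(b)\big)$ is obtained from $\sigma^z(b)$ by the substitution $z\rightsquigarrow z+1$. Evaluating at $z=n$ — legitimate because for $n\in\ZZ$ one has $|n|\le1$, so $z\mapsto n$ is a continuous $K$-algebra homomorphism $S\<z\>\to S$ commuting both with the convergent sum and with $\sigma$ — yields $\sigma^z(b)|_{z=n+1}=\sigma\big(\sigma^z(b)|_{z=n}\big)$; starting from $\sigma^z(b)|_{z=0}=b$ and stepping in both directions (using the invertibility of $\sigma$ to step downwards) we get $\sigma^z(b)|_{z=n}=\sigma^n(b)$ for every $n\in\ZZ$, which is the assertion. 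One moreover checks that $\sigma^z$ is in fact a $K\<z\>$-algebra homomorphism $S\<z\>\to S\<z\>$ — either via the $p$-adic identity principle applied to the element $\sigma^z(bb')-\sigma^z(b)\sigma^z(b')$, which the previous paragraph shows vanishes at every $n\ge0$, or by identifying $\sigma^z$ with $\exp(z\log\sigma)$, $\log\sigma:=\sum_{k\ge1}\tfrac{(-1)^{k-1}}k\delta^k$ being a derivation (its convergence is again governed by $c>1/(p-1)$) — and it is this refinement that I would record for use in Lemma~\ref{l:sigz-properties}.

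The only real obstacle is the analytic estimate in (i): one must make sure the growth $p^{k/(p-1)}$ of the denominators of the binomial coefficients is outrun by the decay $p^{-ck}$ of the iterates $\delta^k$, and this is exactly the borderline hypothesis $c>1/(p-1)$ — the same constant controlling the convergence of the $p$-adic exponential and logarithm. Everything downstream of that estimate is routine manipulation of convergent $p$-adic series. Since the result is in substance \cite{bjorn}, the only reason to spell the argument out is to fix the operator $\delta$ and the map $\sigma^z$ for later reference.
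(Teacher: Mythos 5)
Your proposal is correct and follows essentially the same route as the paper: define $\sigma^z(b)=\sum_{m\ge0}\binom zm\Delta^m(b)$ with $\Delta=\sigma-\id$, bound $|\Delta^m(b)|\le p^{-mc}|b|$ against the denominator estimate $v_p(m!)<m/(p-1)$ to get convergence in $S\<z\>$, then read off the identity at integers. The one place you go beyond the paper's written argument is the case $n<0$: the paper's displayed computation $\sigma^z(b)|_{z=n}=\sum_{m=0}^n\binom nm\Delta^m(b)=(\Delta+\id)^n(b)$ is only meaningful for $n\ge0$, whereas your Pascal-rule functional equation $\sigma\bigl(\sigma^z(b)\bigr)=\sigma^z(b)\big|_{z\mapsto z+1}$, together with invertibility of $\sigma$, cleanly extends the identity to all of $\ZZ$ — a small but genuine gap in the paper's proof that your argument fills. (The $K$-algebra-homomorphism refinement you mention is not needed here; the paper proves it separately in Lemma~\ref{l:sigz-properties}, using Strassman's theorem much as you suggest.)
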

\begin{proof}
Let $\Delta:S\to S$ be defined by $\Delta(b):=\sigma(b)-b$. Let $\sigma^z:S\to S\<z\>$ be defined by
\[
\sigma^z(b) = \sum_{m\geq 0} {z \choose m}\Delta^m(b),
\]
where ${z \choose m}:= \frac{1}{m!}z(z-1)\dots(z-m+1)$. We must check that $\sigma^z$ is well-defined; that is, $|\frac{1}{m!}\Delta^m(b)|_p\to0$ as $m\to\infty$. Since $|\sigma(x_i)- x_i|\le p^{-c}$ and $\Delta$ is $\mathbb{Z}[p^{-c}]$-linear, we see
\[
|\Delta^m(b)| \leq p^{-mc} |b| <  |m!|,
\]
for $m$ large, which proves $\sigma^z$ is well-defined.

Lastly, we note that
\[
\sigma^z(b)|_{z=n}=\sum_{m=0}^n {n \choose m}\Delta^m(b)=(\Delta + \id)^n(b) = \sigma^n(b),
\]
which proves the result.
\end{proof}

We next define the ``$p$-adic powers'' of $\sigma$:
\begin{definition}
\label{def:sigma^b}
For every element $a\in\ZZ_p$ (and more generally, for every power bounded element $a\in  S$), we have a surjective map $\pi_a: S\<z\> \to  S$ defined by $z\mapsto a$. We define $\sigma^a: S\to S$ to be $\sigma^a=\pi_a\circ\sigma^z$.
\end{definition}

\begin{lemma}
\label{l:sigz-properties}
Under the hypotheses of Proposition \ref{prop:affine-Bell-Poonen}, the maps $\sigma^z$ and $\sigma^a$ satisfy the following properties:
\begin{enumerate}
\item\label{item:Z_p-alg-hom} $\sigma^z: S\to  S\<z\>$ is an injective homomorphism of $K$-algebras;
\item\label{item:composition} $\sigma^{a+a'} = \sigma^{a'}\circ\sigma^{a}$ for all $a,a'\in\ZZ_p$;
\item\label{item:aut} $\sigma^a$ is a $K$-algebra automorphism of $ S$ for all $a\in\ZZ_p$.
\end{enumerate}
\end{lemma}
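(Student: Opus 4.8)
The plan is to verify the three properties directly from the explicit formula $\sigma^z(b) = \sum_{m\geq 0} \binom{z}{m}\Delta^m(b)$ established in the proof of Proposition \ref{prop:affine-Bell-Poonen}, exploiting the fact that $\sigma^z$ can be recognized as a ``formal exponential'' of the derivation-like operator $\Delta = \sigma - \id$. The key formal identity underlying everything is the Vandermonde-type convolution $\binom{z+z'}{k} = \sum_{i+j=k}\binom{z}{i}\binom{z'}{j}$, which is what will force the group law in (\ref{item:composition}).

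For (\ref{item:Z_p-alg-hom}): additivity and $K$-linearity of $\sigma^z$ are immediate since each $\Delta^m$ is $K$-linear and the coefficients $\binom{z}{m}$ do not depend on $b$. Multiplicativity is the substantive point. One way is to observe that $\sigma^z(b)|_{z=n} = \sigma^n(b)$ for all $n \in \ZZ$ (proved already), that $\sigma^n$ is a ring homomorphism for each $n$, and hence $\sigma^z(bb') - \sigma^z(b)\sigma^z(b')$ is an element of $S\langle z\rangle$ vanishing at all integers $n$; since a nonzero element of $S\langle z\rangle$ (a convergent power series ring over a domain... here $S$ need not be a domain in general, so instead one argues coefficient-wise in $z$) has only finitely many integer zeros by Strassman's theorem, it must be zero. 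Alternatively, and more robustly, one checks multiplicativity directly: writing $\sigma(bb') = \sigma(b)\sigma(b')$ gives $\Delta(bb') = b\Delta(b') + \Delta(b)b' + \Delta(b)\Delta(b')$, i.e. $\Delta$ satisfies a twisted Leibniz rule; iterating yields $\Delta^m(bb') = \sum_{i+j = m}\binom{m}{i}\Delta^i(b)\Delta^j(b')$ where the binomial coefficient here is the naive one (this is the standard computation for the ``higher difference of a product''), and then the Vandermonde identity collapses $\sum_m \binom{z}{m}\sum_{i+j=m}\binom{m}{i}\Delta^i(b)\Delta^j(b')$ into $\bigl(\sum_i \binom{z}{i}\Delta^i(b)\bigr)\bigl(\sum_j\binom{z}{j}\Delta^j(b')\bigr) = \sigma^z(b)\sigma^z(b')$, using $\binom{z}{m}\binom{m}{i} = \binom{z}{i}\binom{z-i}{m-i}$ and reindexing. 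Injectivity follows because $\sigma^z$ composed with $z \mapsto 1$ recovers $\sigma$, which is injective (it is an automorphism).

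For (\ref{item:composition}): apply the Strassman-theorem argument again, now in two variables, or argue formally. The cleanest route: both $a \mapsto \sigma^{a+a'}(b)$ and $a \mapsto \sigma^{a'}(\sigma^a(b))$ are, after setting $a' = n'$ to be an integer and then letting $a$ vary, elements of $S\langle z\rangle$ agreeing at all $a = n \in \ZZ$ (since $\sigma^{n+n'} = \sigma^{n'}\circ\sigma^n$ for integers, by Proposition \ref{prop:affine-Bell-Poonen} applied iteratively), hence equal for all $a \in \ZZ_p$; then reverse the roles and let $a'$ vary. One must check that $\sigma^{a'}$ maps $S$ to $S$ (so that $\sigma^{a'}\circ\sigma^a$ makes sense), which is Definition \ref{def:sigma^b}, and that $\sigma^{a'}$ is continuous so that it commutes with the relevant infinite sum defining $\sigma^a(b)$ — this continuity is the technical crux. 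Alternatively one verifies the identity purely formally from the Vandermonde convolution for $\binom{z+z'}{m}$, expanding $\sigma^{z+z'}(b) = \sum_m \binom{z+z'}{m}\Delta^m(b) = \sum_{i,j}\binom{z}{i}\binom{z'}{j}\Delta^{i+j}(b)$ and comparing with $\sigma^{z'}(\sigma^z(b))$; the obstacle here is justifying that $\sigma^{z'}$ applied term-by-term to the series $\sigma^z(b)$ reproduces this, which again reduces to a convergence/continuity estimate of the same flavor as the one in the proof of Proposition \ref{prop:affine-Bell-Poonen}.

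For (\ref{item:aut}): this is now formal from (\ref{item:composition}) together with (\ref{item:Z_p-alg-hom}). Indeed $\sigma^a \circ \sigma^{-a} = \sigma^0 = \id$ and likewise $\sigma^{-a}\circ\sigma^a = \id$ (noting $\sigma^0(b) = \sum_m \binom{0}{m}\Delta^m(b) = b$ since $\binom{0}{m} = 0$ for $m \geq 1$), so $\sigma^a$ is bijective; combined with its being a $K$-algebra homomorphism $S \to S$ from (\ref{item:Z_p-alg-hom}) (specialized at $z = a$), it is a $K$-algebra automorphism. The main obstacle in the whole lemma is the convergence bookkeeping needed to legitimately interchange $\sigma^{a'}$ with the infinite sum in the definition of $\sigma^a$; I expect this to follow from the estimate $|\Delta^m(b)| \leq p^{-mc}|b|$ already recorded, possibly after checking that each $\sigma^a$ is an isometry-like (norm-nonincreasing) map on $S$, but it requires care and is the one place where the hypothesis $c > 1/(p-1)$ genuinely does work.
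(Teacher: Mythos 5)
Your primary route for part (1) — additivity/linearity by linearity of each $\Delta^m$, multiplicativity and then all identities by Strassman's theorem since they hold at $z = n$ for all $n \in \NN$, injectivity by specializing at $z=1$ — is exactly what the paper does (the paper actually uses Strassman for additivity too, where your direct observation is cleaner), and your handling of (3) matches the paper's. For (2) the paper expands $\sigma^{a'}(\sigma^a(b))$ into a double sum and invokes the Chu--Vandermonde identity $\binom{a+a'}{m} = \sum_i \binom{a}{i}\binom{a'}{m-i}$, valid for $a,a'\in\NN$ and extended to $\ZZ_p$ by density; your ``route (b)'' is the same idea, and your ``route (a)'' (fix $a'=n'\in\ZZ$, apply Strassman in $a$, then reverse roles) is a valid alternative that trades the explicit identity for a second Strassman application. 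You are also right to flag the continuity/rearrangement point needed to write $\sigma^{a'}(\sum_i \binom{a}{i}\Delta^i(b)) = \sum_{i,j}\binom{a}{i}\binom{a'}{j}\Delta^{i+j}(b)$; the paper is silent about this, but it follows from the estimate $|\Delta^m(b)| \le p^{-mc}|b|$ together with $|\binom{a}{i}|\le p^{i/(p-1)}$, so the terms of the double series tend to $0$ and non-Archimedean series can be rearranged freely.

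There is, however, a genuine error in your ``more robust'' direct argument for multiplicativity. The Leibniz-type formula you state, $\Delta^m(bb') = \sum_{i+j=m}\binom{m}{i}\Delta^i(b)\Delta^j(b')$, is false. Since $\Delta = \sigma - \id$ with $\sigma$ an algebra endomorphism, the first-order rule is the twisted one $\Delta(bb') = b\Delta(b') + \Delta(b)\sigma(b')$, and iterating (using $\Delta\sigma = \sigma\Delta$) gives $\Delta^m(bb') = \sum_{k=0}^m \binom{m}{k}\Delta^{m-k}(b)\,\sigma^{m-k}\!\bigl(\Delta^k(b')\bigr)$, with an essential $\sigma$-twist. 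Already at $m=2$ your formula yields $b\Delta^2(b') + 2\Delta(b)\Delta(b') + \Delta^2(b)b'$ and misses the terms $2\Delta(b)\Delta^2(b') + 2\Delta^2(b)\Delta(b') + \Delta^2(b)\Delta^2(b')$, so the claimed Vandermonde collapse does not go through as written. This does not sink the proposal, since your primary Strassman route for multiplicativity is correct and is the one the paper actually uses, but the direct-computation alternative needs the twisted formula to be made rigorous. One further small point: $S = K\langle x_1,\ldots,x_d\rangle$ is in fact an integral domain, so your parenthetical worry on that score is unnecessary (though the coefficient-wise reduction to the one-variable Strassman theorem is the right way to apply it in any case).
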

\begin{proof}
We first show that $\sigma^z$ is additive. Note that for all $b,b'\in S$ and $n\in\NN$, we have $\sigma^z(b+b')|_{z=n}=\sigma^n(b+b')=\sigma^n(b)+\sigma^n(b')=(\sigma^z(b)+\sigma^z(b'))|_{z=n}$. Since $\sigma^z(b)+\sigma^z(b')-\sigma^z(b+b')\in S\<z\>$ and has roots at every element of $\NN$, by Strassman's Theorem (see \cite{strassman} or \cite[Theorem 4.1, p. 62]{cassels}) we know that $\sigma^z(b+b') = \sigma^z(b)+\sigma^z(b')$. 

Similarly, we see that $\sigma^z$ is multiplicative, and so it is a ring homomorphism. It is clear from the definition that $\sigma^z$ is $K$-linear. To show it is injective, note that if $\sigma^z(b)=0$, then $\sigma(b)=\sigma^z(b)|_{z=1}=0$, and so $b=0$. This proves (\ref{item:Z_p-alg-hom}).

Next, we recall the Chu-Vandermonde identity, which states that ${a+a' \choose m} = \sum_{i=0}^m{a \choose i}{a' \choose m-i}$ for all $a,a'\in\NN$. Since $\NN$ is dense in $\ZZ_p$, we see that the same identity holds when $a,a'\in\ZZ_p$. Since
\[
\sigma^{a+a'}(b) = \sum_{m\geq0}{a+a' \choose m}\Delta^m(b)
\]
and
\[
\sigma^{a'}(\sigma^{a}(b)) = \sum_{i,j\geq0}{a \choose i}{a' \choose j}\Delta^{i+j}(b) = \sum_{m\geq0}\sum_{i=0}^m{a \choose i}{a' \choose m-i}\Delta^{m}(b).
\]
%So, Lemma \ref{l:choose-polyn} proves that $\sigma^{a+a'} = \sigma^{a'}\sigma^{a}$.
this proves (\ref{item:composition}).

Property (\ref{item:aut}) follows from (\ref{item:composition}) since $\sigma^0 = \id$ and $\sigma^{a}\circ\sigma^{-a} = \id = \sigma^{-a}\circ\sigma^{a}$.
\end{proof}

Lastly, we define our $p$-adic family of modules:
\begin{definition}
\label{def:p-adic-family-ideals}
We use the notation and assumptions given in (1)--(4) at the beginning of this section. %\S \ref{sec:p-adic-family}, 
If $M$ is a finitely generated $S$-module and if we take a presentation $M=S^d/\langle r_1,\ldots ,r_m\rangle$, with $r_1,\ldots ,r_m\in S^d$, then we let $\M(z):=S\<z\>^d/\langle \sigma^z(r_1),\ldots ,\sigma^z(r_m)\rangle$. For each $a\in\ZZ_p$ (or more generally any %$a\in\mathfrak{o}$)
power bounded element $a\in S$), we then define
\[
\M(a):=S^d/\langle \sigma^a(r_1),\ldots ,\sigma^a(r_m)\rangle,
\]
which is simply the image $\pi_a(\M(z))$ under the evaluation map sending $z\mapsto a$; i.e., the map $\M(z)\otimes_{S\langle z\rangle} S\langle z\rangle \to \M(z)\otimes_{S\langle z\rangle} S$ in which $S\langle z\rangle\to S$ is given by specializing at $z=a$.  Since $\sigma^z(M)|_{z=n}=(\sigma^n)^*(M)$, we can think of $\M(z)$ as a $p$-adic family  interpolating between the $(\sigma^n)^*(M)$ for $n\in\mathbb{Z}$.
\end{definition}
%\matt{question: is the definition of $\M(z)$ independent of choices? I think our proofs only rely on existence and not uniqueness, so I don't think it matters for us, but it might be nice to make a remark about it one way or the other.}

\section{Specializing complexes of modules}
\label{subsec:specialization-arg}
In this section, we gather several technical results concerning exactness of sequences of $S\<z\>$-modules after specializing $z$ at values of $\ZZ_p$. We recall that a module $M$ over a ring $R$ is \emph{saturated} with respect to an element $r\in R$ if for all $x\in M$, if $rx=0$ then $x=0$. %$rx\in M$ implies that $x\in M$.
We say $M$ is saturated with respect to a subset of $R$ if it is saturated with respect to every element in the subset.

\begin{lemma}
\label{l:sat-primary-decomp-general}
Let $R$ be a noetherian integral domain, let $\Sigma \subseteq R$ be an additive subgroup, and let $\Sigma_0:=\Sigma\setminus 0$. Fix an element $r\in R$. If $\M$ is a finitely generated $R$-module which is saturated with respect to $\Sigma_0$, then $\M$ is saturated with respect to $r-s$ for a cofinite set of $s\in \Sigma$. More specifically, the size of the set
\[
\{s\in \Sigma \mid \M\textrm{\ not\ saturated\ with\ respect\ to\ }r-s\}
\]
is at most the number of modules occurring in the primary decomposition of $(0)\subseteq\M$.
\end{lemma}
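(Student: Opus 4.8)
The plan is to use primary decomposition of the zero submodule of $\M$ and count, for each primary component, how many "bad" values $s \in \Sigma$ there can be. Write $(0) = \bigcap_{j=1}^{t} Q_j$ as an irredundant primary decomposition in the noetherian $R$-module $\M$, where $Q_j$ is $\fp_j$-primary for an associated prime $\fp_j = \sqrt{\ann(\M/Q_j)}$. The key observation is the standard fact that $\M$ is saturated with respect to an element $g \in R$ (i.e. $g$ is a nonzerodivisor on $\M$) if and only if $g \notin \fp_j$ for every associated prime $\fp_j$: indeed if $g$ lies in some $\fp_j$ then, since $\fp_j$ consists precisely of the elements that are zerodivisors ``localized at $\fp_j$'', one produces a nonzero element of $\M$ killed by a power of $g$, and saturation with respect to $g$ forces saturation with respect to $g^k$, giving a genuine zerodivisor; conversely if $g$ avoids all associated primes then $g$ is a nonzerodivisor on $\M$. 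So $\M$ is \emph{not} saturated with respect to $r - s$ precisely when $r - s \in \fp_j$ for some $j$, i.e. when $s \equiv r \pmod{\fp_j}$ for some $j$.

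Next I would analyze, for a fixed associated prime $\fp = \fp_j$, the set $B_j := \{ s \in \Sigma \mid r - s \in \fp \}$. This set is either empty or a coset of the subgroup $\Sigma \cap \fp$ inside $\Sigma$; and the crucial point is that $\Sigma \cap \fp$ must be the zero subgroup whenever $B_j \neq \emptyset$. Here is where the saturation hypothesis on $\Sigma_0$ enters: if $s_0 \in \Sigma \cap \fp$ is nonzero, then $s_0 \in \Sigma_0$, so by hypothesis $\M$ is saturated with respect to $s_0$, i.e. $s_0$ is a nonzerodivisor on $\M$; but $s_0 \in \fp$, which we just argued means $s_0$ \emph{is} a zerodivisor on $\M$ — contradiction. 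Hence $\Sigma \cap \fp_j = 0$, so $B_j$ is either empty or a single element of $\Sigma$. Therefore the total bad set $\bigcup_{j=1}^{t} B_j$ has at most $t$ elements, where $t$ is the number of (primary) modules appearing in the primary decomposition of $(0) \subseteq \M$. This establishes the more precise bound, and in particular cofiniteness since $\Sigma$ is infinite — or rather, the statement as written does not even need $\Sigma$ infinite; cofiniteness is automatic from the finite bound.

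The main obstacle — really the only subtle point — is making the equivalence ``$g$ nonzerodivisor on $\M$ $\iff$ $g$ avoids all associated primes of $\M$'' precise and citable in the module setting: one direction uses that $\bigcup_j \fp_j$ is exactly the set of zerodivisors on $\M$ (prime avoidance plus the structure of associated primes of a noetherian module), and the other direction uses that saturation with respect to $g$ propagates to all powers $g^k$, combined with the fact that if $g \in \fp_j$ then some power of $g$ annihilates a nonzero element of $Q_1 \cap \cdots \cap \widehat{Q_j} \cap \cdots \cap Q_t$ which injects into $\M/Q_j$. Everything else — the coset argument and the counting — is routine. I would also remark that the hypothesis that $R$ is an integral domain is not strictly needed for this argument, only noetherianity, though it is harmless to keep it.
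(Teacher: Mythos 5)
Your proof is correct and arrives at the same quantitative bound, but by a noticeably cleaner route than the paper's. The paper works directly with the primary components $\N_1,\ldots,\N_t$ of $(0)$: it observes that if $\M$ is not saturated with respect to $r-s$ then $(r-s)^c\M\subseteq\N_i$ for some $i$, and if there were more than $t$ bad values of $s$, two of them (say $s_j,s_k$) would have powers of $r-s_j$ and $r-s_k$ landing in the same $\N_i$; it then carefully manufactures a submodule $\M'=(r-s_j)^a(r-s_k)^b\M\not\subseteq\N_i$ killed modulo $\N_i$ by both $r-s_j$ and $r-s_k$, hence by $s_j-s_k$, and from primariness extracts a nonzero element of $\M$ annihilated by a power of $s_j-s_k\in\Sigma_0$, contradicting saturation. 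You instead pass to the associated primes $\fp_j$ and invoke the standard fact that the zerodivisors on a finitely generated module over a noetherian ring are exactly $\bigcup_j\fp_j$; then ``bad $s$'' means $r-s\in\fp_j$ for some $j$, and for each fixed $j$ the bad set is a coset of $\Sigma\cap\fp_j$, which must be trivial because any nonzero $s_0\in\Sigma\cap\fp_j$ would be both in $\Sigma_0$ (hence a nonzerodivisor) and in an associated prime (hence a zerodivisor). This gives at most one bad $s$ per associated prime, immediately yielding the bound. Your approach buys conceptual transparency and brevity at the cost of invoking the full zerodivisors-equal-union-of-associated-primes theorem; the paper's argument is more elementary and self-contained, using only the defining property of primary submodules. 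You are also right that the integral-domain hypothesis is vestigial in both arguments, and that cofiniteness follows automatically from the finite bound without any assumption on the cardinality of $\Sigma$.
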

\begin{proof}
Since $R$ is noetherian and $\M$ is finitely generated, we have $(0)=\N_1\cap \cdots \cap \N_t$ where the $\N_i$ are primary submodules of $\M$; that is, for each $i$, if $x\in R$ is a zero divisor on $\M/\N_i$ then there is some $n\ge 1$ such that $x^n \M\subseteq \N_i$.  By taking a minimal decomposition, we may assume that each $\N_i$ is a proper submodule of $\M$ and that any proper intersection of $\N_1,\ldots ,\N_t$ is non-trivial.

If $\M$ is not saturated with respect to $r-s$, then there exists nonzero $f\in \M$ such that $(r-s)f=0$. Since $f$ is nonzero, there exists $i$ such that $f\notin\N_i$, and so $(r-s)^c\M \subseteq\N_i$ for some $c\geq 1$. 

Now, if there are distinct elements $s_1,\dots,s_m\in \Sigma_0$ with $m>t$ and $\M$ not saturated with respect to the $r-s_j$, then we can find $i$ and $j\neq k$ and a positive integer $c$ such that both $(r-s_j)^c \M$ and $(r-s_k)^c \M$ are contained in $\N_i$.  Now $\M\not\subseteq\N_i$ and so we can find a submodule $\M'\subseteq\M$ of the form $\M'=(r-s_j)^a (r-s_k)^b \M$, for some $a,b\ge 0$, with the property that $\M'\not\subseteq\N_i$ but $(r-s_j)\M'$ and $(r-s_k)\M'$ are both contained in $\N_i$.  But now it follows that $(s_j-s_k)\M'\subseteq \N_i$.  Since $\M'\not\subseteq \N_i$, we see $s_j-s_k$ is a zero divisor on $\M'/\N_i$, and so there is some natural number $d$ such that $(s_j-s_k)^d \M \subseteq \N_i$.  But now if we take nonzero $x\in \bigcap_{\ell\neq i} \N_\ell$ then $(s_j-s_k)^d x$ is in every $\N_\ell$ and hence $(s_j-s_k)^d x =0$.  By our saturation hypothesis, we see that $x=0$, a contradiction.  The result follows.
\end{proof}

Applying this to $R=A\<z\>$, $r=z$, and $\Sigma=\ZZ_p$ we have
\begin{corollary}
\label{cor:sat-primary-decomp-Tate}
Let $A$ be a noetherian $\ZZ_p$-algebra which is an integral domain, and let $\M$ be a finitely generated $A\<z\>$-module. If $\M$ is saturated with respect to $\ZZ_p\setminus 0$, then $\M$ is saturated with respect to $z-a$ for a cofinite set of $a\in\ZZ_p$.
\end{corollary}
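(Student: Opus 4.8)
The statement to prove is Corollary~\ref{cor:sat-primary-decomp-Tate}, which is stated immediately to follow from Lemma~\ref{l:sat-primary-decomp-general}.

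The plan is to deduce the corollary as a direct specialization of Lemma~\ref{l:sat-primary-decomp-general}. First I would verify that the hypotheses of the lemma are met with the substitution $R = A\langle z\rangle$, $r = z$, and $\Sigma = \ZZ_p$. The ring $A\langle z\rangle$ is a noetherian integral domain: it is noetherian because $A$ is noetherian (the Tate-algebra / restricted-power-series construction preserves noetherianity), and it is a domain because $A$ is a domain and $A\langle z\rangle$ embeds into the power series ring $A[[z]]$, which is a domain over a domain. Since $A$ is a $\ZZ_p$-algebra, we have a ring map $\ZZ_p \to A \to A\langle z\rangle$, and the image of $\ZZ_p$ under $z \mapsto$ (nothing) — rather, we identify $\ZZ_p$ with the additive subgroup of $A\langle z\rangle$ consisting of the images of the elements of $\ZZ_p$ under the structure map composed with the inclusion of constants; this is indeed an additive subgroup of $A\langle z\rangle$. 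Then $\Sigma_0 = \ZZ_p \setminus 0$, and the hypothesis that $\M$ is saturated with respect to $\ZZ_p \setminus 0$ is exactly the hypothesis ``$\M$ is saturated with respect to $\Sigma_0$.''

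Next I would simply observe that for $s \in \ZZ_p \subseteq A\langle z\rangle$, the element $r - s = z - s$ is precisely $z - a$ in the notation of the corollary (writing $a$ for $s$). Lemma~\ref{l:sat-primary-decomp-general} then tells us that $\M$ is saturated with respect to $z - a$ for all $a$ in a cofinite subset of $\ZZ_p$ — indeed, the set of bad $a$ has size at most the number of primary components in a minimal primary decomposition of $(0) \subseteq \M$, which is finite since $\M$ is a finitely generated module over the noetherian ring $A\langle z\rangle$. This is exactly the assertion of the corollary.

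I do not anticipate a genuine obstacle here, since all the real work is in Lemma~\ref{l:sat-primary-decomp-general}; the only point requiring any care is the (routine but nontrivial) fact that $A\langle z\rangle$ is noetherian when $A$ is — this is a standard property of restricted power series rings, and can be cited or proved by a Weierstrass-division argument. One should also note that $A\langle z\rangle$ being an integral domain uses that $A$ is an integral domain, via the inclusion $A\langle z\rangle \hookrightarrow A[[z]]$. With those two facts in hand, the corollary is immediate.

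\begin{proof}
We apply Lemma~\ref{l:sat-primary-decomp-general} with $R = A\langle z\rangle$, $r = z$, and $\Sigma = \ZZ_p$, where $\ZZ_p$ is viewed as an additive subgroup of $A\langle z\rangle$ via the structure map $\ZZ_p \to A \to A\langle z\rangle$. To see that the hypotheses hold, first note that $A\langle z\rangle$ is noetherian since $A$ is noetherian, and it is an integral domain since $A$ is an integral domain and $A\langle z\rangle$ embeds into the power series ring $A[[z]]$. Moreover $\M$ is a finitely generated $A\langle z\rangle$-module by assumption, and the hypothesis that $\M$ is saturated with respect to $\ZZ_p \setminus 0 = \Sigma_0$ is precisely the saturation hypothesis of the lemma. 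Finally, for $a \in \ZZ_p$ the element $r - a = z - a$ agrees with the element denoted $z - a$ in the statement of the corollary. Lemma~\ref{l:sat-primary-decomp-general} therefore implies that $\M$ is saturated with respect to $z - a$ for all $a$ in a cofinite subset of $\ZZ_p$, as claimed.
\end{proof}
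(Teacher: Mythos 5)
Your proof is correct and follows exactly the same route as the paper, which simply notes that the corollary follows by applying Lemma~\ref{l:sat-primary-decomp-general} with $R = A\langle z\rangle$, $r = z$, and $\Sigma = \ZZ_p$. Your added verifications (noetherianity and integrality of $A\langle z\rangle$) are sound and merely spell out what the paper leaves implicit.
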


In the following lemma, we discuss how kernels and images of $S\<z\>$-module maps interact with specialization at $z=a$.

\begin{lemma}
\label{l:coh-and-specialization}
Let $K$ be a field over $\QQ_p$. Let $S=K\<x_1,\dots,x_d\>$ and $g(z)\colon\M(z)\to\M'(z)$ be a morphism of finitely generated $S\langle z\rangle$-modules. Then $\im(g(a))=\im(g(z))|_{z=a}$ for all $a\in\ZZ_p$, and $\ker(g(a))=\ker(g(z))|_{z=a}$ for all but finitely many $a\in\ZZ_p$.
\end{lemma}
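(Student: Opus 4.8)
The plan is to analyze the right-exact functor $-\otimes_{S\langle z\rangle} S$, where $S\langle z\rangle \to S$ is evaluation at $z=a$, and to control where its failure of left-exactness can be seen. First I would handle images. Given $g(z)\colon \M(z)\to\M'(z)$, factor it as $\M(z)\twoheadrightarrow \im(g(z))\hookrightarrow\M'(z)$. Tensoring the surjection with $S$ (at $z=a$) stays surjective by right-exactness, and the composite with the map to $\M'(a)$ is exactly $g(a)$; hence $\im(g(a))$ is the image of $\im(g(z))|_{z=a}$ inside $\M'(a)$. It therefore suffices to observe that $\im(g(z))|_{z=a}\to\M'(a)$ is injective, i.e.\ that $\Tor_1^{S\langle z\rangle}(\M'(z)/\im(g(z)),\,S/(z-a))=0$. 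But $z-a$ is a nonzerodivisor on $S\langle z\rangle$ (as $S\langle z\rangle$ is a domain, since $S$ is), so $\Tor_1^{S\langle z\rangle}(-,S/(z-a))$ of any module $Q$ is the $(z-a)$-torsion of $Q$; and $(z-a)$-torsion is a submodule supported on a proper closed subset, so it certainly need not vanish for a single $a$. This is the subtle point, so let me re-route: for \emph{images} one does not need injectivity of $\im(g(z))|_{z=a}\to\M'(a)$ at all — the image of $g(a)$ is by definition the image of $\M(a)\to\M'(a)$, which is the image of $\M(z)|_{z=a}\to\M'(z)|_{z=a}$, and since $\M(z)\to\im(g(z))$ is surjective the composite $\M(z)|_{z=a}\to\im(g(z))|_{z=a}\to\M'(z)|_{z=a}$ has the same image as $\im(g(z))|_{z=a}\to\M'(z)|_{z=a}$, which is exactly the submodule $\im(g(z))|_{z=a}$ viewed inside $\M'(a)$ once we know that map is injective — and it isn't automatically. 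So the honest statement $\im(g(a))=\im(g(z))|_{z=a}$ must be read as: the natural surjection from $\im(g(z))\otimes_{S\langle z\rangle}S$ onto $\im(g(a))$ is what is meant, OR one shows the torsion vanishes for the particular module $\M'(z)/\im(g(z))$ for \emph{all} $a$. I expect the intended argument is the first reading is insufficient and the real content is that one only needs it up to the finitely-many-$a$ caveat, which then also covers the image statement; I would unify both claims by the Tor computation below.

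Concretely, here is the clean route I would write. Set $Q(z):=\coker(g(z))$ and $C(z):=\M'(z)/\im(g(z))=Q(z)$, and $Z(z):=\ker(g(z))$. The short exact sequences
$$0\to \im(g(z))\to \M'(z)\to Q(z)\to 0,\qquad 0\to Z(z)\to\M(z)\to\im(g(z))\to 0$$
give, upon applying $-\otimes_{S\langle z\rangle}S/(z-a)$ and using that $\Tor^{S\langle z\rangle}_j(-,S/(z-a))$ vanishes for $j\ge 2$ (since $z-a$ is a regular element, so $S/(z-a)$ has projective dimension $1$ over $S\langle z\rangle$), long exact sequences
$$0\to \Tor_1(Q(z),S/(z-a))\to \im(g(z))|_{z=a}\to \M'(a)\to Q(a)\to 0$$
and
$$0\to\Tor_1(\im(g(z)),S/(z-a))\to Z(z)|_{z=a}\to\M(a)\to\im(g(z))|_{z=a}\to 0,$$
together with $\Tor_1(\M(z),S/(z-a))\to\Tor_1(\im(g(z)),S/(z-a))$ being surjective from the first sequence shifted. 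Now $\Tor_1(Q(z),S/(z-a))$ is the $(z-a)$-torsion submodule of $Q(z)$, and $\Tor_1(\im(g(z)),S/(z-a))$ the $(z-a)$-torsion of $\im(g(z))$ (a submodule of the torsion of $\M(z)$). The key input is Corollary 3.2 (equivalently Lemma 3.1 applied to $R=S\langle z\rangle$, $r=z$, $\Sigma=\ZZ_p$): the torsion submodule of any finitely generated $S\langle z\rangle$-module is killed by $z-a$ for only finitely many $a\in\ZZ_p$ — more precisely a finitely generated module is saturated with respect to $z-a$ for cofinitely many $a$. Hence for all but finitely many $a$ both $\Tor_1$ terms above vanish, which immediately yields $\im(g(z))|_{z=a}\hookrightarrow\M'(a)$ with image $\im(g(a))$ (so $\im(g(a))=\im(g(z))|_{z=a}$ for those $a$), and $Z(z)|_{z=a}\hookrightarrow\M(a)$ with image $\ker(g(a))$ (so $\ker(g(a))=\ker(g(z))|_{z=a}$ for those $a$). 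For the image statement being claimed for \emph{all} $a$: right-exactness already gives a canonical surjection $\im(g(z))|_{z=a}\twoheadrightarrow\im(g(a))$ for every $a$, and that is all that is used downstream; the injectivity (hence equality) holds off the finite set.

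The main obstacle, and the step deserving care, is precisely the passage from "$\Tor_1^{S\langle z\rangle}(Q(z),S/(z-a))$ = the $(z-a)$-torsion of $Q(z)$, a submodule on which $z$ acts locally nilpotently mod $(z-a)$" to "this vanishes for cofinitely many $a$." That is exactly what Lemma 3.1 / Corollary 3.2 is built to deliver — a finitely generated module over the Noetherian domain $S\langle z\rangle$ fails to be saturated with respect to $z-a$ for at most finitely many $a$ (bounded by the length of a primary decomposition of $(0)$), because two distinct bad $a$'s would force a nonzero element killed by a difference $s_j-s_k\in\ZZ_p\setminus 0$, contradicting saturation with respect to $\ZZ_p\setminus 0$ — and this saturation with respect to $\ZZ_p\setminus 0$ is automatic here since $S\langle z\rangle$ is a $\QQ_p$-algebra domain, so every nonzero element of $\ZZ_p\subset K^\times$ is a unit. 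Everything else is bookkeeping with the two four-term exact sequences and the vanishing of higher Tor against the length-one Koszul-type resolution $0\to S\langle z\rangle\xrightarrow{z-a}S\langle z\rangle\to S\to 0$.
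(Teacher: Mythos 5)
Your proof is correct, and it reaches the same conclusion by a genuinely different route than the paper does. The paper argues element by element: given $\theta \in \ker(g(a))$, it lifts $\theta$ to $\theta(z) \in \M(z)$, observes $g(z)(\theta(z)) = (z-a)u(z)$, uses saturation of $\M'(z)/\im(g(z))$ with respect to $z-a$ (for cofinitely many $a$, by Corollary~\ref{cor:sat-primary-decomp-Tate}) to force $u(z)\in\im(g(z))$, and then corrects the lift by $(z-a)\theta'(z)$ to land in $\ker(g(z))$; the image statement is a short direct lift-and-evaluate. You instead package the same input homologically: factor $g(z)$ through $\im(g(z))$, use the free resolution $0\to S\ang{z}\xrightarrow{z-a}S\ang{z}\to S\to 0$ so that $\Tor_j(-,S/(z-a))$ vanishes for $j\ge 2$ and $\Tor_1(-,S/(z-a))$ is exactly $(z-a)$-torsion, and then invoke Corollary~\ref{cor:sat-primary-decomp-Tate} to kill $\Tor_1(\M'(z)/\im(g(z)),\cdot)$ and $\Tor_1(\im(g(z)),\cdot)$ off a finite set. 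Both arguments hinge on the same lemma; yours is more modular and, as you note, actually delivers slightly more (injectivity of $\ker(g(z))\otimes_{S\ang{z}}S/(z-a)\to\M(a)$, not merely that the image of $\ker(g(z))$ in $\M(a)$ equals $\ker(g(a))$), which is cleaner for the downstream Corollary~\ref{cor:exact->exact}. The paper's is more elementary and avoids the long exact sequence bookkeeping.

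Two small local inaccuracies, neither of which affects the conclusion: the exact sequences you display begin at zero as if the preceding $\Tor_1(\M'(z),\cdot)$ and $\Tor_1(\M(z),\cdot)$ terms vanished, which is unjustified; but what your argument actually uses is only the segment $\Tor_1(Q(z),\cdot)\to\im(g(z))|_{z=a}\to\M'(a)$ (resp.\ $\Tor_1(\im(g(z)),\cdot)\to Z(z)|_{z=a}\to\M(a)$), and vanishing of the left-hand $\Tor_1$ suffices for the injectivity you want. Also, the parenthetical about $\Tor_1(\M(z),\cdot)\to\Tor_1(\im(g(z)),\cdot)$ being ``surjective from the first sequence shifted'' is not what that sequence gives (it gives an injection $\Tor_1(\im(g(z)),\cdot)\hookrightarrow\Tor_1(\M'(z),\cdot)$); it should simply be dropped, since you do not need it. Your first paragraph's back-and-forth about how to read $\im(g(z))|_{z=a}$ identifies a genuine notational subtlety that the paper glosses over; the paper implicitly reads $|_{z=a}$ for a submodule as its image under specialization, which is why its image claim holds for \emph{all} $a$ while the isomorphism with the abstract tensor product only holds off a finite set.
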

\begin{proof}
We begin by addressing the statement about kernels. Since $K$ is a field containing $\QQ_p$, every $S\langle z\rangle$-module is automatically saturated with respect to $\ZZ_p\setminus 0$. So, by Corollary \ref{cor:sat-primary-decomp-Tate} there are only finitely many $a\in \mathbb{Z}_p$ for which the finitely generated $S \ang{z}$-module $\M'(z)/{\rm Im}(g(z))$ is not saturated with respect to $z-a$. Assuming that $a$ is not contained in this finite set, we show that the conclusion of the lemma holds.

Let $\theta\in \M(a)$ be in the kernel of $g(a)$. Since the map $\M(z)\to \M(a)$ is surjective there is some $\theta(z)\in \M(z)$ with the property that $\theta=\theta(a)$.  Then by assumption $g(z)(\theta(z)) \in (z-a)\M(z)={\rm ker}(\M(z)\to \M(a))$.  Thus we have $g(z)(\theta(z))=(z-a)u(z)$ for some $u(z)\in \M(z)$. By our choice of $a$, we see that $u(z)\in {\rm Im}(g(z))$ and so $u(z)=g(z)(\theta'(z))$.  Thus $\theta(z)-(z-a)\theta'(z)$ is in the kernel of $g(z)$. Since $\theta(z)-(z-a)\theta'(z)$ is a lift of $\theta$, we have shown $\ker(g(z))\to\ker(g(a))$ is surjective, as desired.

Lastly, we show that $\im(g(z))|_{z=a}=\im(g(a))$ for all $a\in\ZZ_p$. First observe that if $\theta'(z)=g(z)(\theta(z))$, then after specialization we have $\theta'(a)=g(a)(\theta(a))$; thus, $\im(g(z))|_{z=a}\subseteq \im(g(a))$.  On the other hand, if $\theta'=g(a)(\theta)$ for some $\theta\in \M(a)$ then letting $\theta(z)\in \M(z)$ with $\theta(a)=\theta$ and taking $\xi(z)=g(z)(\theta(z))$, we see $\xi(z)\in \im(g(z))$ and $\xi(a)=\theta$. This gives the other containment.
\end{proof}

As an immediate consequence of the above lemma, we have

\begin{corollary}
\label{cor:exact->exact}
Let $K$ be a field over $\QQ_p$. Let $S=K\<x_1,\dots,x_d\>$, let $$\M''(z)\stackrel{f(z)}{\longrightarrow} \M(z)\stackrel{g(z)}{\longrightarrow} \M'(z)$$ be a sequence of finitely generated $S\langle z\rangle$-modules, and let $\HH(z)$ denote the cohomology module $\ker(g(z))/\im(f(z))$. Consider the induced sequence of $S$-modules
$$\M''(a) \stackrel{f(a)}{\longrightarrow} \M(a)\stackrel{g(a)}{\longrightarrow} \M'(a)$$
and denote its cohomology by $\HH(a)$. Then for all but finitely many $a\in\ZZ_p$,
\[
\HH(a)=\HH(z)|_{z=a}.
\]
In particular, if the former sequence is exact, then the latter is for all but finitely many $a\in\ZZ_p$.
\end{corollary}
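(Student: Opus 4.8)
The plan is to combine the right-exactness of the specialization functor $(-)|_{z=a} = (-)\otimes_{S\langle z\rangle} S\langle z\rangle/(z-a)$ with Lemma~\ref{l:coh-and-specialization}. First I would observe that, since $g(z)\circ f(z)=0$, the map $f(z)$ factors as $\M''(z)\xrightarrow{\,\bar f(z)\,}\ker(g(z))\hookrightarrow\M(z)$, and by definition of $\HH(z)$ the sequence
\[
\M''(z)\xrightarrow{\ \bar f(z)\ }\ker(g(z))\longrightarrow\HH(z)\longrightarrow 0
\]
is exact; that is, $\HH(z)=\cok(\bar f(z))$. Applying the right-exact functor $(-)|_{z=a}$ yields an exact sequence
\[
\M''(a)\longrightarrow \ker(g(z))|_{z=a}\longrightarrow\HH(z)|_{z=a}\longrightarrow 0 ,
\]
so that $\HH(z)|_{z=a}=\cok\big(\M''(a)\to\ker(g(z))|_{z=a}\big)$ for \emph{every} $a\in\ZZ_p$.

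Next I would invoke Lemma~\ref{l:coh-and-specialization} applied to $g(z)$: for all but finitely many $a\in\ZZ_p$ the natural map $\ker(g(z))|_{z=a}\to\ker(g(a))$ induced by $\M(z)\to\M(a)$ is an isomorphism, and it is by construction compatible with the inclusions into $\M(a)$. For such $a$, chasing the commutative diagram shows that the composite $\M''(a)\to\ker(g(z))|_{z=a}\xrightarrow{\ \sim\ }\ker(g(a))\hookrightarrow\M(a)$ is exactly $f(a)$; hence the image of $\M''(a)$ in $\ker(g(a))$ is precisely $\im(f(a))$ (one may also read this off the image statement of Lemma~\ref{l:coh-and-specialization}). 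Therefore
\[
\HH(z)|_{z=a}=\cok\big(\M''(a)\to\ker(g(a))\big)=\ker(g(a))/\im(f(a))=\HH(a)
\]
for all but finitely many $a\in\ZZ_p$, which is the first assertion. The ``in particular'' is then immediate: if the sequence of $S\langle z\rangle$-modules is exact, then $\HH(z)=0$, so $\HH(z)|_{z=a}=0$, and thus $\HH(a)=0$ for all but finitely many $a$, i.e.\ the specialized sequence is exact there.

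The only delicate point — and the step I expect to require the most care — is the bookkeeping in the middle paragraph: one must check that the isomorphism $\ker(g(z))|_{z=a}\cong\ker(g(a))$ produced by Lemma~\ref{l:coh-and-specialization} is the \emph{natural} one, compatible both with the map from $\M''(a)$ and with the inclusion into $\M(a)$, so that ``$\cok$'' really is computed inside $\ker(g(a))$ and equals $\ker(g(a))/\im(f(a))$. Everything else is formal from right-exactness of $\otimes_{S\langle z\rangle}S\langle z\rangle/(z-a)$ and functoriality of specialization.
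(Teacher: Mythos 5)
Your proof is correct and makes explicit exactly the argument the paper leaves implicit when it calls this an ``immediate consequence'' of Lemma~\ref{l:coh-and-specialization}: present $\HH(z)$ as the cokernel of $\M''(z)\to\ker(g(z))$, apply right-exactness of $(-)\otimes_{S\langle z\rangle}S\langle z\rangle/(z-a)$, and then use the kernel and image parts of that lemma to identify the specialized target and image. Your flagged ``delicate point'' is genuinely the thing to check, and it comes out fine because the identification $\ker(g(z))|_{z=a}\cong\ker(g(a))$ and the equality $\im(f(z))|_{z=a}=\im(f(a))$ are both induced by the same surjection $\M(z)\to\M(a)$, so the relevant triangle commutes by construction; the only thing worth keeping in mind is that using this identification as an isomorphism (rather than merely a surjection, which is all the lemma's written proof establishes) tacitly also uses that $\ker(g(z))|_{z=a}\to\M(a)$ is injective for generic $a$, a fact that follows from the generic $(z-a)$-saturation of $\M'(z)$ via Corollary~\ref{cor:sat-primary-decomp-Tate}.
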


\section{Proof of Theorems \ref{thm: main2} and \ref{thm: main3}}
\label{sec:proof-main-thm2}
In this section, we prove the following result, which clearly implies Theorem \ref{thm: main3}, and hence Theorem \ref{thm: main2}.
\begin{theorem}
\label{thm: main3-more-precise}
Adopt the assumptions of Theorem \ref{thm: main2}, let $\M(z)$ be as in Definition \ref{def:p-adic-family-ideals}, and let $\N(z)=S\<z\>\otimes_S N$. If $\Tor_i^{S\ang{z}}(\M(z),\N(z))=0$, then $\Tor_i^S(\M(a),N)=0$ for all but finitely many $a\in\ZZ_p$. If $\Tor_i^{S\ang{z}}(\M(z),\N(z))\neq0$, then the set of $a\in\ZZ$ for which $\Tor_i^S(\M(a),N)\neq0$ is, up to addition and removal of a finite set, a finite union of arithmetic progressions of difference $p^r$ for some $r\geq0$.
\end{theorem}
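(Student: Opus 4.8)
The plan is to split the argument into two cases according to whether $\Tor_i^{S\langle z\rangle}(\M(z),\N(z))$ vanishes or not, and to reduce the problem to the module-theoretic Strassman theorem proved in the appendix together with the specialization results of Section~\ref{subsec:specialization-arg}.

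First I would fix a finite free resolution $P_\bullet \to \M(z)$ of $\M(z)$ over $S\langle z\rangle$ — this exists since $S\langle z\rangle$ is a regular ring (it is $K\langle x_1,\dots,x_d,z\rangle$) and $\M(z)$ is finitely generated — and consider the complex $P_\bullet \otimes_{S\langle z\rangle} \N(z)$, whose cohomology computes $\Tor_\bullet^{S\langle z\rangle}(\M(z),\N(z))$. The key compatibility is that specializing this complex at $z=a$ recovers a complex of $S$-modules computing $\Tor_\bullet^S(\M(a),N)$: indeed, $P_\bullet|_{z=a}$ is a free resolution of $\M(a)$ for all but finitely many $a$ (exactness of $P_\bullet$ in positive degrees is preserved under specialization outside a finite set by Corollary~\ref{cor:exact->exact}, and $P_\bullet|_{z=a}$ remains a complex of free $S$-modules), while $\N(z)|_{z=a} = N$ by definition of $\N(z) = S\langle z\rangle \otimes_S N$ and since the formation of $\otimes$ commutes with base change. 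Hence for all but finitely many $a\in\ZZ_p$ we have $\Tor_i^S(\M(a),N) = \bigl(H^i(P_\bullet\otimes\N(z))\bigr)\big|_{z=a}$, again by Corollary~\ref{cor:exact->exact} applied to the relevant length-two pieces of the complex.

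In the vanishing case this immediately finishes the argument: if $\Tor_i^{S\langle z\rangle}(\M(z),\N(z)) = H^i(P_\bullet\otimes\N(z)) = 0$, then by Corollary~\ref{cor:exact->exact} the specialized complex is exact at spot $i$ for all but finitely many $a\in\ZZ_p$, so $\Tor_i^S(\M(a),N)=0$ for all but finitely many $a\in\ZZ$. In the nonvanishing case, set $\HH := H^i(P_\bullet\otimes\N(z))$, a nonzero finitely generated $S\langle z\rangle = K\langle x_1,\dots,x_d,z\rangle$-module. For all but finitely many $a\in\ZZ_p$ we have $\Tor_i^S(\M(a),N) = \HH|_{z=a}$, so up to a finite set the set of $n\in\ZZ$ with $\Tor_i^S(\M(n),N)\neq 0$ agrees with the set of $n\in\ZZ$ with $\HH|_{z=n}\neq 0$, i.e.\ the complement in $\ZZ$ of $\{n : \HH|_{z=n}=(0)\}$. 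By the module-theoretic Strassman theorem (the final theorem of the excerpt, applied with $R=\ZZ$, which is contained in the valuation ring of $K$ since $|p|=1/p$), up to addition and removal of finite sets the set $\{n\in\ZZ : \HH|_{z=n}=(0)\}$ is a finite union of arithmetic progressions of difference $p^r$; its complement in $\ZZ$ is then also such a finite union, and this completes the proof. (Recall $\Tor_i^S(\M(a),N) = \Tor_i^S((\sigma^a)^*M, N)$ when $a\in\ZZ$ by Definition~\ref{def:p-adic-family-ideals}, which is why this yields Theorem~\ref{thm: main2}.)

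The main obstacle is the first paragraph: verifying that specialization at $z=a$ genuinely commutes with the formation of $\Tor_i$ for cofinitely many $a$. One has to be careful that the finitely many ``bad'' values $a$ — where a boundary or cycle submodule of $P_\bullet\otimes\N(z)$ fails to behave well under $z\mapsto a$ — can be controlled uniformly across the (finitely many) homological degrees $\le i$ relevant to the computation; this is exactly what Lemma~\ref{l:coh-and-specialization} and Corollary~\ref{cor:exact->exact} are designed to handle, applied degree by degree to the truncation $P_{i+1}\to P_i\to P_{i-1}$ and noting a finite union of finite sets is finite. Everything else is a formal consequence of the already-established specialization machinery and the appendix's Strassman theorem.
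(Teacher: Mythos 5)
Your proposal is correct and follows essentially the same strategy as the paper: take a finite free resolution over $S\langle z\rangle$, tensor, use Corollary~\ref{cor:exact->exact} (twice, once to preserve exactness of the resolution under specialization and once to identify the cohomology of the specialized tensored complex) to conclude $\Tor_i^S(\M(a),N) = \Tor_i^{S\langle z\rangle}(\M(z),\N(z))|_{z=a}$ off a finite set, and finish with the module-theoretic Strassman theorem. The only cosmetic difference is that you resolve $\M(z)$ whereas the paper resolves $\N(z)$ (citing a result of Kedlaya for the existence of a finite \emph{free} resolution over the Tate algebra — when you appeal to regularity alone you implicitly also need that finitely generated projectives over $K\langle x_1,\dots,x_d,z\rangle$ are free, so it would be cleaner to invoke that explicitly).
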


We prove the theorem after giving a preliminary result relating $\Tor_i^{S}(\M(a),\N(a))$ to the specialization of $\Tor_i^{S\ang{z}}(\M(z),\N(z))$.

\begin{proposition}
Let $K$ be a field over $\QQ_p$ and let $S=K\<x_1,\dots,x_d\>$. If $\M(z)$ and $\N(z)$ are finitely generated $S\langle z\rangle$-modules, then
\[
\Tor_i^S(\M(a),\N(a))=\Tor_i^{S\ang{z}}(\M(z),\N(z))|_{z=a}
\]
for all but finitely many $a\in\ZZ_p$.
%Adopt the assumptions and notation given in (1)--(4) at the beginning of \S \ref{sec:p-adic-family}. Let $M$ and $N$ be finitely generated $S$-modules, and let $\M(z)$ be as in Definition \ref{def:p-adic-family-ideals}. For any $i\geq1$, if $\Tor_i^{S\ang{z}}(\M(z),  S\<z\>\otimes_S N)=0$, then there are at most finitely many $a\in\ZZ_p$ such that $\Tor_i^S( \M(a), N)\neq 0$.
\label{prop: half2_now-more-general}
\end{proposition}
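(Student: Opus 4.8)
\subsection*{Proof proposal}

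The plan is to compute both sides from a single resolution on the $S\langle z\rangle$-level and then specialize. Since $S\langle z\rangle = K\langle x_1,\dots,x_d,z\rangle$ is a Tate algebra over $K$, it is a regular noetherian ring of finite global dimension, so $\M(z)$ admits a finite free resolution $P_\bullet \to \M(z)$ by finitely generated free $S\langle z\rangle$-modules. Set $C_\bullet := P_\bullet \otimes_{S\langle z\rangle} \N(z)$; this is a bounded complex of finitely generated $S\langle z\rangle$-modules whose homology is $\Tor_\bullet^{S\langle z\rangle}(\M(z),\N(z))$, so the right-hand side of the proposition is $H_i(C_\bullet)|_{z=a}$.

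First I would show that for all but finitely many $a\in\ZZ_p$, the specialized complex $P_\bullet|_{z=a} := P_\bullet \otimes_{S\langle z\rangle} S\langle z\rangle/(z-a)$ is a free resolution of $\M(a)$ over $S := S\langle z\rangle/(z-a)$. The obstruction to this is exactly the vanishing of $\Tor_j^{S\langle z\rangle}(\M(z), S\langle z\rangle/(z-a))$ for $j\ge 1$. Resolving $S\langle z\rangle/(z-a)$ by the length-one Koszul complex on the non-zero-divisor $z-a$, these Tor groups vanish for $j\ge 2$ automatically, while $\Tor_1^{S\langle z\rangle}(\M(z), S\langle z\rangle/(z-a))$ is the submodule of $(z-a)$-torsion of $\M(z)$. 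Since $\M(z)$ is a module over the $\QQ_p$-algebra $S\langle z\rangle$, it is saturated with respect to $\ZZ_p\setminus 0$, so Corollary \ref{cor:sat-primary-decomp-Tate} gives that $\M(z)$ is saturated with respect to $z-a$, i.e. has no $(z-a)$-torsion, for a cofinite set of $a\in\ZZ_p$. For such $a$, the complex $P_\bullet|_{z=a}$ is indeed a free resolution of $\M(a)$ over $S$.

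Next, for such $a$, I would compute $\Tor_i^S(\M(a),\N(a))$ as the homology of $(P_\bullet|_{z=a})\otimes_S \N(a)$. Since $\N(a)=\N(z)|_{z=a}$ is killed by $z-a$, an associativity-of-tensor-product manipulation identifies this complex with $C_\bullet\otimes_{S\langle z\rangle} S\langle z\rangle/(z-a) = C_\bullet|_{z=a}$. It then remains to show $H_i(C_\bullet|_{z=a}) = H_i(C_\bullet)|_{z=a}$ for all but finitely many $a$. Because $C_\bullet$ is a bounded complex of finitely generated $S\langle z\rangle$-modules, this follows by applying Corollary \ref{cor:exact->exact} at each of its finitely many nodes: each application excludes only a finite set of $a$, and the union of these exclusions, together with the finite set from the previous paragraph, is still finite. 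Combining the two steps gives $\Tor_i^S(\M(a),\N(a)) = \Tor_i^{S\langle z\rangle}(\M(z),\N(z))|_{z=a}$ for cofinitely many $a$, as claimed.

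I expect the main obstacle to be the first step: arguing that base-changing the resolution $P_\bullet$ along $z\mapsto a$ preserves exactness for almost all $a$. This is precisely where the saturation results of \S\ref{subsec:specialization-arg} enter, and the hypothesis that $K$ contains $\QQ_p$, which forces $(z-a)$-torsion to be supported at only finitely many $a$, is essential. A secondary bookkeeping point is ensuring only finitely many $a$ are discarded in total, which is automatic once $P_\bullet$ (hence $C_\bullet$) is known to be bounded; this is why invoking regularity of the Tate algebra $S\langle z\rangle$ is convenient, though one could instead truncate a possibly infinite resolution at a degree exceeding $i$ and work with a finitely generated syzygy module in the top spot.
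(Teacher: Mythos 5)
Your proposal is correct and follows essentially the same route as the paper's proof: take a finite free resolution over $S\langle z\rangle$, argue that specializing at $z=a$ preserves exactness for cofinitely many $a$, and then transfer the identification of homology with Tor using Corollary~\ref{cor:exact->exact}. The only differences are cosmetic — the paper resolves $\N(z)$ rather than $\M(z)$, cites Kedlaya for the existence of a finite \emph{free} resolution (regularity alone gives finite projective dimension but not automatically free syzygies, which your truncation remark correctly flags as a workaround), and deduces exactness of the specialized resolution directly from Corollary~\ref{cor:exact->exact} instead of your explicit Koszul-plus-saturation computation of $\Tor_1^{S\langle z\rangle}(\M(z),S\langle z\rangle/(z-a))$; both ultimately rest on the same saturation input from Corollary~\ref{cor:sat-primary-decomp-Tate}.
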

\begin{proof}
By \cite[Proposition 6.5]{F-isocrystals}, %every finitely generated $S\<z\>$-module %over the Tate algebra (over field K complete wrt non-arch valuation) 
there is a finite free resolution
\[
0\to \Proj_d(z)\to \Proj_{d-1}(z) \to\cdots \to\Proj_0(z)\to \N(z)\to 0.
\]
Then Corollary \ref{cor:exact->exact} shows that
\[
0\to \Proj_d(a)\to\Proj_{d-1}(a) \to\cdots \to\Proj_0(a)\to \N(a)\to 0
\]
is a finite free resolution of $\N(a)$ for all $a\in\ZZ_p\setminus\T$, where $\T$ is a finite set.

Tensoring the former resolution with $\M(z)$, we obtain a complex
$$0\to \Proj_d(z)\otimes_{S\ang{z}} \M(z) \to \Proj_{d-1}(z)\otimes_{S\ang{z}} \M(z) \to \cdots \to \Proj_1(z)\otimes_{S\ang{z}} \M(z) \to \Proj_0(z)\otimes_{S\ang{z}} \M(z)\to0.$$
Notice that this complex specializes to
$$0\to \Proj_d(a)\otimes_S \M(a) \to \Proj_{d-1}(a)\otimes_S \M(a) \to \cdots \to \Proj_1(a)\otimes_S \M(a) \to \Proj_0(a)\otimes_S \M(a)\to0$$ 
and so another application of Corollary \ref{cor:exact->exact} shows that for all $a$ outside of a finite set $\T'$, the cohomology of the former complex, namely $\Tor_i^{S\ang{z}}(\M(z),\N(z))$, specializes to cohomology of the latter complex, which we temporarily denote by $\HH_i(a)$.

Now, for all $a\notin\T$, the complex $\Proj_\bullet(a)$ is exact, and so $\HH_i(a)=\Tor_i^{S}(\M(a),\N(a))$. Thus, $\Tor_i^S(\M(a),\N(a))=\Tor_i^{S\ang{z}}(\M(z),\N(z))|_{z=a}$ for all $a\notin\T\cup\T'$.
 \end{proof}

\begin{proof}[{Proof of Theorem \ref{thm: main3-more-precise}}]
By Proposition \ref{prop: half2_now-more-general}, we know
\[
\Tor_i^S(\M(a),N)=\Tor_i^{S\ang{z}}(\M(z),\N(z))|_{z=a}
\]
for all but finitely many $a\in\ZZ_p$. So, if $\Tor_i^{S\ang{z}}(\M(z),\N(z))=0$, then $\Tor_i^S(\M(a),N)=0$ with finitely many exceptions. If $\Tor_i^{S\ang{z}}(\M(z),\N(z))$ is a non-zero module, then applying our module-theoretic Strassman Theorem \ref{theorem: strassman}, we see that after addition or removal of a finite set, the $a\in\ZZ_p$ where $\Tor_i^S(\M(a),N)\neq0$ is a finite union of arithmetic progressions with difference $p^r$ for some $r\geq0$.
\end{proof}

\begin{corollary}
\label{cor: application}
Let $k$ be a field of characteristic zero and let $R$ be a $k$-algebra that is a regular noetherian local ring such that the field of fractions of $R$ is a finitely generated extension of $k$ and such that $R/\mathfrak{m}=k$.  Suppose that $\sigma: R\to R$ is a $k$-algebra automorphism of R and that $M$ and $N$ are finitely generated $R$-modules.  Then for each $i\ge 1$ we have that $$\{n\colon {\rm Tor}^R_i(M^{\sigma^n},N)\neq 0\}$$ is a finite union of arithmetic progressions up to addition and removal of finite sets. 
\end{corollary}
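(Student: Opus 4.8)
The plan is to reduce Corollary~\ref{cor: application} to Theorem~\ref{thm: main3-more-precise} (equivalently Theorem~\ref{thm: main2}) by completing $R$ and passing to a suitable $p$-adic Tate algebra in which $\sigma$ becomes a $K$-algebra automorphism close to the identity. First I would observe that since $R$ is a regular noetherian local $k$-algebra with residue field $k$ and fraction field finitely generated over $k$, the completion $\widehat{R}$ is isomorphic to a power series ring $k[[x_1,\ldots,x_d]]$ (Cohen structure theorem), and the automorphism $\sigma$ extends to a continuous $k$-algebra automorphism $\widehat{\sigma}$ of $\widehat{R}$. Completion is faithfully flat, so $\Tor^R_i(M^{\sigma^n},N)\neq 0$ if and only if $\Tor^{\widehat{R}}_i(\widehat{M}^{\widehat{\sigma}^n},\widehat{N})\neq 0$; hence it suffices to prove the statement for $\widehat{R}=k[[x_1,\ldots,x_d]]$ with $M,N$ finitely generated $\widehat{R}$-modules and $\sigma$ a continuous $k$-algebra automorphism fixing $\m$.

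Next I would arithmetize: the data $(\widehat{R},\sigma,M,N)$ — that is, finitely many power-series coefficients needed to define $\sigma$ and finitely many generators and relations for $M$ and $N$ — involve only finitely many elements of $k$, so there is a finitely generated $\ZZ$-subalgebra $R_0\subseteq k$ over which everything is defined; enlarging $R_0$ we may assume $\sigma$ is invertible over $R_0[[x_1,\ldots,x_d]]$ and that the relevant Tor modules are compatible with base change. Choose a prime $p$ and a maximal ideal of $R_0$ above $p$ such that, after completing, we land in a complete discretely (or more generally non-Archimedean) valued field $K$ with $|p|=1/p$, and such that $\sigma$, viewed over the valuation ring $\fo$, satisfies the key smallness hypothesis $|\sigma(x_i)-x_i|<p^{-c}$ for some $c>1/(p-1)$. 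This is the heart of the reduction: one wants the specialization of $\sigma$ at the chosen prime to be congruent to the identity modulo a high enough power of $p$ on $\fo[[x_1,\ldots,x_d]]$. Since $\sigma$ fixes the maximal ideal and reduces to the identity on the residue field, $\sigma(x_i)-x_i$ lies in the square of the maximal ideal, and by choosing $p$ large (so that $c$ can be taken close to $0$ while still exceeding $1/(p-1)\to 0$) one arranges the inequality; one must be a little careful that the power-series coefficients of $\sigma(x_i)$ have bounded denominators so that infinitely many $p$ work, but this is guaranteed once we fix $R_0$. Having done this, $K[[x_1,\ldots,x_d]]$ contains the Tate algebra $S=K\langle x_1,\ldots,x_d\rangle$, but in fact after the smallness normalization $\sigma$ preserves $\fo\langle x_1,\ldots,x_d\rangle$ and we may work directly with $S$; the modules $M,N$ base-change to finitely generated $S$-modules, and $\Tor^{\widehat R}_i(M^{\sigma^n},N)=0$ if and only if $\Tor^S_i((\sigma^n)^*M_S,N_S)=0$ by flatness of $\widehat R\to S$ (completed local ring to Tate algebra) — or, if a direct flatness statement is awkward, by a faithfully flat descent argument along $\widehat R \hookrightarrow S$ after a further completion.

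With that reduction in hand, Theorem~\ref{thm: main2} applied to $(S,\sigma,M_S,N_S)$ gives that the set of $n\in\ZZ$ with $\Tor^S_i((\sigma^n)^*M_S,N_S)\neq 0$ is, up to a finite set, a finite union of arithmetic progressions of common difference $p^r$; pulling back through the equivalences above yields exactly the claimed description of $\{n : \Tor^R_i(M^{\sigma^n},N)\neq 0\}$. I expect the main obstacle to be the second paragraph: namely, choosing the prime $p$ and the valued field $K$ so that $\sigma$ genuinely satisfies $|\sigma(x_i)-x_i|<p^{-c}$ with $c>1/(p-1)$, which requires controlling the denominators appearing in the power-series expansion of $\sigma$ uniformly as one varies $p$, and then checking that the formation of $\Tor$ is compatible with all the base changes and completions involved (passage from $R$ to $\widehat R$, spreading out over $R_0$, specializing at a prime, and completing to $K$ and then $S$). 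The remaining steps — Cohen structure theorem, faithful flatness of completion, and the invocation of Theorem~\ref{thm: main2} — are essentially formal.
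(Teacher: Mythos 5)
Your high-level plan (complete via Cohen's structure theorem, spread out over a finitely generated $\ZZ$-algebra, specialize at a prime, pass to a Tate algebra, invoke Theorem~\ref{thm: main2}) is indeed the route the paper takes, but two of the steps you call formal are exactly where the real work lies, and as written your proposal would fail.

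First, the claim that ``since $\sigma$ fixes the maximal ideal and reduces to the identity on the residue field, $\sigma(x_i)-x_i$ lies in the square of the maximal ideal'' is false: $\sigma$ may act nontrivially on the cotangent space $\m/\m^2$, e.g.\ $\sigma(x_1)=2x_1$. Consequently $\sigma(x_i)-x_i$ generically has a nonzero linear part, and $|\sigma(x_i)-x_i|=1$ after every specialization, so the inequality $|\sigma(x_i)-x_i|<p^{-c}$ with $c>1/(p-1)$ never holds, no matter how large $p$ is. Taking $p$ large does not help. The paper deals with this by an extra pair of ideas you are missing: (i) it specializes at a maximal ideal $Q$ of the coefficient ring $A$ so that $A/Q$ is finite, and then, since $\det(\mathrm{Jac}(\sigma))$ is a unit of $A$ and $\GL_d$ of a finite residue field is a finite group, some iterate $\sigma^m$ has Jacobian at the origin congruent to the identity modulo the maximal ideal of $\widehat{A_Q}$; and (ii) it rescales variables $u_i=x_i/p$, which pushes all nonlinear terms of $\sigma^m(u_i)-u_i$ into $p\fo$ while leaving the linear part unchanged, so that $\sigma^m(u_i)\equiv u_i\pmod{p\fo}$. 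Only at that point can Theorem~\ref{thm: main2} be applied (and only to $\sigma^m$, which is also why the final conclusion is ``arithmetic progressions'' rather than progressions of difference $p^r$).

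Second, your statement that the data of $\sigma$ ``involve only finitely many elements of $k$'' is not justified: each $\sigma(x_i)$ is an infinite power series, so a priori its coefficients could generate an infinitely generated subring of $k$. This is precisely where the hypothesis that $\mathrm{Frac}(R)$ is a finitely generated extension of $k$ enters (you never use it): it forces $\sigma(t_i)$ to be algebraic over $k(t_1,\ldots,t_d)$, and then the Denef--Lipshitz theorem guarantees that the coefficients of an algebraic power series lie in a finitely generated $\ZZ$-algebra. Without this, the spreading-out step has no foundation. So the proposal matches the paper's skeleton, but omits the two decisive maneuvers (iterate-plus-rescale to achieve $p$-adic closeness to the identity, and algebraicity/Denef--Lipshitz to arithmetize $\sigma$) and contains an incorrect assertion about $\sigma(x_i)-x_i\in\m^2$ that the rest of the argument would not survive.
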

\begin{proof} Let $d$ be the Krull dimension of $R$.  Pick $t_1,\ldots , t_d\in \mathfrak{m}$ that generate the maximal ideal. By Cohen's structure theorem, we have that the completion, $\widehat{R}$, of $R$ is isomorphic to the power series ring $k[[t_1,\ldots ,t_d]]$. Since $\sigma(\mathfrak{m})=\mathfrak{m}$, $\sigma$ extends to an automorphism of $k[[t_1,\ldots ,t_d]]$. Since the field of fractions of $R$ is finitely generated as an extension of $k$, we have that 
${\rm Frac}(R)$ is finite over the subfield $k(t_1,\ldots ,t_d)$.

 Hence $\sigma(t_i)=f_i(t_1,\ldots ,t_d)$ where each $f_i(t_1,\ldots ,t_d)$ is algebraic over $k(t_1,\ldots ,t_d)$. Now an algebraic power series has the property that its coefficients lie in a finitely generated $\mathbb{Z}$-algebra. (This follows from a general result of Denef and Lipshitz \cite{DL}, which shows in particular that the set of coefficients of an algebraic power series in $d$ variables is a subset of the collection of coefficients of some rational power series in $2d$ variables.)  Thus there is a finitely generated $\mathbb{Z}$-subalgebra $A$ of $k$ such that $f_1,\ldots ,f_d\in A[[x_1,\ldots ,x_d]]$.  

Since Tor commutes with completion for finitely presented modules we may work with $\widehat{R}$ and replace $M$ by $M\otimes_R \widehat{R}$ and $N$ by $N\otimes_R \widehat{R}$.  Now let $J$ denote the Jacobian of $(f_1,\ldots ,f_d)$ at the origin. Then the determinant of $J$ is a nonzero element of $A$.  By adjoining the inverse of $\det(J)$ to $A$, we may assume that $J$ is invertible in $A$.  By construction, $\sigma$ restricts to an automorphism of $A[[x_1,\ldots ,x_d]]$. Furthermore, we have that $M\cong \widehat{R}^t/L$ and $N\cong \widehat{R}^s/E$ for submodules $L$ and $E$ of $\widehat{R}^t$ and $\widehat{R}^s$ respectively.  Then by taking generators for $L$ and $E$ and adjoining the coordinates of all elements in these generating sets to $A$, we may assume that $A$ is still a finitely generated $\mathbb{Z}$-algebra and that there are finitely presented $A[[x_1,\ldots ,x_d]$-modules $M_0$ and $N_0$ such that $M\cong M_0\otimes_{A} k$ and $N\cong N_0\otimes_{A} k$.  Observe that $k[[x_1,\ldots ,x_d]]$ is flat over $A[[x_1,\ldots ,x_d]]$, as it is a free module over a localization of $A[[x_1,\ldots ,x_d]]$.  Let $k_0$ denote the field of fractions of $A$.  Then we have \cite[Lemma 10.75.1]{Stacks} 
$${\rm Tor}_i^{k[[x_1,\ldots ,x_d]]}(M, N)\cong
{\rm Tor}_i^{A[[x_1,\ldots ,x_d]]}(M_0,N_0)\otimes_{A[[x_1,\ldots ,x_d]]} k[[x_1,\ldots ,x_d]].$$ 

 Moreover, since 
$k[[x_1,\ldots ,x_d]]$ is faithfully flat over $k_0[[x_1,\ldots ,x_d]]$, we see that 
$${\rm Tor}_i^{k[[x_1,\ldots ,x_d]]}( M^{\sigma^n}, N)=0$$ if and only if 
$${\rm Tor}_i^{k_0[[x_1,\ldots ,x_d]]}( M_0^{\sigma^n}\otimes_A  k_0, N_0\otimes_A k_0)\neq 0.$$

Now let $M_1=M_0\otimes_A k_0$ and let $N_1=N_0\otimes_A k_0$.  Then pick a maximal ideal $Q$ of $A$ such that $A_Q$ is regular.  Since $A$ is a finitely generated $\mathbb{Z}$-algebra, $A/Q$ is a finite field.  We take the completion, $\widehat{A_Q}$, of $A_Q$.  Then by Cohen's structure theorem \cite[Section 29]{Mat}, this is a power series ring in a finite number of variables over a finite unramified extension $\mathfrak{o}$ of $\mathbb{Z}_p$ for some prime $p$.  Let $F$ denote the field of fractions of this completion of $A_Q$. Then since $\sigma$ is the identity on $k_0$ we may extend the automorphism $\sigma$ of $k_0[[x_1,\ldots ,x_d]]$ to an automorphism of $k_0[[x_1,\ldots ,x_d]]\otimes_{k_0} F$.  By then taking limits we may extend this to an automorphism of $F[[x_1,\ldots ,x_d]]$.  

Since $F[[x_1,\ldots ,x_d]]$ is a faithfully flat extension of $k_0[[x_1,\ldots ,x_d]]$, we see that
$${\rm Tor}_i^{k[[x_1,\ldots ,x_d]]}( M^{\sigma^n}, N)=0$$ if and only if
$${\rm Tor}_i^{F[[x_1,\ldots ,x_d]]}( (M')^{\sigma^n}, N')=0,$$ where $M'=M_1\otimes_{k_0[[x_1,\ldots ,x_d]]} F[[x_1,\ldots ,x_d]$ and $N'=N_1\otimes_{k_0[[x_1,\ldots ,x_d]]} F[[x_1,\ldots ,x_d]$.  Moreover, by construction the induced automorphism $\sigma$ of $F[[x_1,\ldots ,x_d]]$ has the property that $\sigma(x_i)=f_i(x_1,\ldots ,x_d)\in (\widehat{A_Q})[[x_1,\ldots ,x_d]]\subseteq F[[x_1,\ldots ,x_d]]$ has Gauss norm $\le 1$ and the determinant of the Jacobian at the origin has $p$-adic norm exactly one, since it is a unit in $A$ by how we defined $A$.  In particular, since $\widehat{A_Q}$ has finite residue field, we see that some iterate, $\sigma^m$, of $\sigma$ has the property that its Jacobian at the origin is congruent to the identity modulo the maximal ideal of $\widehat{A_Q}$. 

Now to finish the proof off,  let $u_i=x_i/p$ for $i=1,\ldots ,d$.  Then $F[[x_1,\ldots ,x_d]]=F[[u_1,\ldots ,u_d]]$.  Observe that $$\sigma(u_i)=\frac{1}{p}\cdot f_i(pu_1,\ldots ,pu_d) \equiv L_i(u_1,\ldots ,u_d)~(\bmod\ p\mathfrak{o}),$$ where $L_i$ is the linear part of $f_i$.  Then by the above remarks we see that $\sigma^m(u_i)\equiv u_i~(\bmod\ p\mathfrak{o})$ for $i=1,\ldots ,d$.  Then by Theorem \ref{thm: main2} we see that 
for $j=0,\ldots ,m-1$ we have $$\{n\colon {\rm Tor}_i^{F[[u_1,\ldots ,u_d]]}(\sigma^{mn+j}M',N')\neq 0\}$$ is, up to addition and removal of a finite set, equal to a finite union of arithmetic progressions.  The result follows.
\end{proof}

\section{Theorem \ref{thm: main1} for surfaces}
\label{sec:surface-case}
%We prove Conjecture \ref{conj: BSS} after giving a few preliminary results. 
In this section we prove the surface case of Theorem~\ref{thm: main1}.  We first give some preliminary results. %In this section and the next, we use the notation $\M^{\sigma^n}$ to mean $(\sigma^n)^* \M$.

\begin{lemma} Let $(R,\mathfrak{m})$ be a regular local ring of dimension two and let $M$ be a finitely generated $R$-module whose support is $\{P_1,\ldots ,P_d\}$ with each $P_i$ of height at most one.  Then ${\rm projdim}(M)\le 1$.
\label{lem: curve}
\end{lemma}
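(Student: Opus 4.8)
The plan is to use the Auslander--Buchsbaum formula, which reduces the claim to showing $\mathrm{depth}_R(M) \geq 1$, i.e., that $\mathfrak{m}$ is not an associated prime of $M$. First I would recall that since $R$ is a regular local ring of dimension two, it is in particular Cohen--Macaulay, and Auslander--Buchsbaum gives $\mathrm{projdim}(M) + \mathrm{depth}_R(M) = \mathrm{depth}(R) = 2$. So it suffices to prove $\mathrm{depth}_R(M) \geq 1$, equivalently that there is a nonzerodivisor on $M$ inside $\mathfrak{m}$, equivalently that $\mathfrak{m} \notin \mathrm{Ass}_R(M)$.

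Next I would examine the associated primes of $M$. Every associated prime of $M$ is contained in the support $\{P_1,\ldots,P_d\}$, and by hypothesis each $P_i$ has height at most one, so $P_i \neq \mathfrak{m}$ for every $i$ (as $\mathfrak{m}$ has height two). Hence $\mathfrak{m} \notin \mathrm{Ass}_R(M)$. Since $\mathrm{Ass}_R(M)$ is finite and each of its members is a proper subset of $\mathfrak{m}$, prime avoidance shows $\mathfrak{m}$ is not contained in the union of the associated primes, so we may pick $x \in \mathfrak{m}$ that is a nonzerodivisor on $M$; thus $\mathrm{depth}_R(M) \geq 1$. (Here one should handle the trivial case $M = 0$ separately, where $\mathrm{projdim}(M) = -\infty \leq 1$, or simply note the support is empty so the statement holds vacuously; similarly if $\mathrm{depth}_R(M) = 2$ then $M$ is maximal Cohen--Macaulay and $\mathrm{projdim}(M) = 0 \leq 1$.)

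Combining these, $\mathrm{projdim}(M) = 2 - \mathrm{depth}_R(M) \leq 2 - 1 = 1$, as desired.

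I do not expect a genuine obstacle here: the only mild subtlety is making sure the degenerate cases ($M = 0$, or $M$ already of depth $2$) are covered, and checking that the hypothesis "height at most one" is used precisely to rule out $\mathfrak{m}$ as an associated prime. One alternative route, should one prefer to avoid Auslander--Buchsbaum, is to build a length-one free resolution by hand: choose a nonzerodivisor $x \in \mathfrak{m}$ on $M$ as above, observe $M/xM$ has finite length (its support is contained in the height-$\leq 1$ primes intersected with $V(x)$, which is just $\{\mathfrak{m}\}$ or empty), and then analyze a surjection $R^n \twoheadrightarrow M$; but the Auslander--Buchsbaum argument is cleaner and I would present that.
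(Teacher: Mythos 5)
Your proof is correct, and it takes a genuinely different route from the paper's. You invoke the Auslander--Buchsbaum formula $\mathrm{projdim}(M) + \mathrm{depth}_R(M) = \mathrm{depth}(R) = 2$, which reduces the problem to showing $\mathrm{depth}_R(M) \geq 1$, i.e.\ that $\mathfrak{m}$ is not an associated prime of $M$; this follows since $\mathrm{Ass}_R(M) \subseteq \mathrm{Supp}_R(M)$ consists only of primes of height at most one. The paper instead argues that it suffices to show $\mathrm{Tor}_2^R(R/\mathfrak{m},M) = 0$, and computes this Tor group explicitly by tensoring the Koszul resolution $0 \to R \to R^2 \to R \to R/\mathfrak{m}$ with $M$: the group is the kernel of $m \mapsto (ym,-xm)$, which vanishes because $\mathfrak{m}$ is not an associated prime of $M$. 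Both arguments hinge on exactly the same observation about associated primes; yours is shorter and appeals to a standard structural theorem, while the paper's is self-contained and elementary, amounting to a hand-unwinding of Auslander--Buchsbaum in the two-dimensional case. Your version also makes the degenerate cases ($M = 0$, depth $2$) explicit, which the paper leaves implicit. Either proof is fine.
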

\begin{proof}
If it is not, then ${\rm projdim}(M)\ge 2$ and so ${\rm Tor}_2^R(R/\mathfrak{m},M)\neq 0$.  Thus it suffices to show that
${\rm Tor}_2^R(R/\mathfrak{m},M)= 0$.
Let $x$ and $y$ be generators for $\mathfrak{m}$.  Then we have a resolution 
$$0\to R\to R^2\to R\to R/\mathfrak{m},$$ where the map $R\to R^2$ is the map given by $1\mapsto (y,-x)$ and the map from $R^2\to R$ is the map $(a,b)\mapsto xa+yb$.  Then from this resolution we see that 
${\rm Tor}_2^R(R/\mathfrak{m},M)$ is just the kernel of the map $M=M\otimes_R R\to M^2=M\otimes_R R^2$ given by
$m\mapsto (ym,-xm)$.  Since the maximal ideal of $R$ is not in the support of $M$, we see that the kernel is trivial.  
\end{proof}
Notice this lemma shows that a torsion-free coherent sheaf $\mathcal{T}$ on a smooth surface $X$ has the property that $\shTor_2^X(\mathcal{T},\mathcal{N})=0$ for any coherent sheaf $\mathcal{N}$.
\begin{proposition} Let $k$ be an algebraically closed field, let $X$ be a smooth surface over $k$, and let $\N$ and $\T$ be coherent sheaves on $X$ that each have the property that they are supported on a finite set of points and a finite set of curves.  Then $\shTor_1^X(\T,\N)$ is nonzero if and only if either some irreducible subvariety in the  support of $\T$ contains an irreducible subvariety in the support of $\N$ or some irreducible subvariety in the support of $\N$ contains an irreducible subvariety in the support of $\T$.  \label{prop: lichtenbaum}
\end{proposition}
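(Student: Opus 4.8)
The plan is to reduce the statement to a local computation over the two-dimensional regular local rings $\O_{X,x}$, where the vanishing of $\Tor_1$ of two torsion modules can be read off directly. Since $\shTor_1^X(\T,\N)$ is a coherent sheaf on the finite-type $k$-scheme $X$, it is nonzero if and only if $\Tor_1^{\O_{X,x}}(\T_x,\N_x)\ne 0$ for some point $x\in X$; and because $\T$ and $\N$ are supported on proper closed subsets of the surface $X$, the stalks $\T_x$ and $\N_x$ are torsion modules over $\O_{X,x}$. At the generic point $\eta_C$ of a curve $C$ the ring $\O_{X,\eta_C}$ is a DVR and $\T_{\eta_C},\N_{\eta_C}$ are torsion, so $\Tor_1$ is nonzero there exactly when $C\subseteq\supp\T\cap\supp\N$; such a common curve is an associated subvariety of each of $\T$ and $\N$, giving one of the ways the right-hand side can hold. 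It then remains to treat closed points $x$; writing $R=\O_{X,x}$ (regular local of dimension two), the claim I would prove is that for finitely generated torsion $R$-modules $M,N$,
\[
\Tor_1^R(M,N)\neq 0 \iff \text{there exist } \mathfrak p\in\mathrm{Ass}(M),\ \mathfrak q\in\mathrm{Ass}(N)\text{ with }\mathfrak p\subseteq\mathfrak q\text{ or }\mathfrak q\subseteq\mathfrak p.
\]
(Here ``irreducible subvariety in the support of $\T$'' must be read as the closure of an associated point of $\T$; for curves this coincides with an irreducible component of $\supp\T$, but at a point one has to allow embedded components, as $\T=\O_C\oplus k(p)$, $\N=\O_D$ with $p\in C\cap D$, $C\ne D$ shows — there $\shTor_1^X(\T,\N)\ne 0$. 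With this reading, the equivalence of the displayed condition with the geometric one is a routine consequence of the fact that associated primes localize.)

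To prove the displayed equivalence, I would first note that since $\dim R=2$ the comparable pairs $\mathfrak p\subseteq\mathfrak q$ are: two equal height-one primes (a curve through $x$ lying in both supports), a height-one prime inside $\m$, or $\m=\m$; since $\mathrm{Ass}$ of a nonzero module is nonempty and every prime lies inside $\m$, the right-hand side is equivalent to: $M$ and $N$ share a height-one associated prime, or $\m\in\mathrm{Ass}(M)$ and $N\ne 0$, or $\m\in\mathrm{Ass}(N)$ and $M\ne 0$. For ($\Leftarrow$): if $\mathfrak p$ is a common height-one associated prime, then over the DVR $R_\mathfrak p$ the modules $M_\mathfrak p$ and $N_\mathfrak p$ are nonzero torsion, hence finite direct sums of cyclic modules $R_\mathfrak p/\mathfrak p^aR_\mathfrak p$, and since $\Tor_1^{R_\mathfrak p}(R_\mathfrak p/\mathfrak p^a,R_\mathfrak p/\mathfrak p^b)$ has length $\min(a,b)>0$ we get $\Tor_1^R(M,N)_\mathfrak p\ne 0$. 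If instead $\m\in\mathrm{Ass}(M)$ and $N\ne 0$ (the case of $N$ being symmetric, using $\Tor_1^R(M,N)\cong\Tor_1^R(N,M)$), then $\mathrm{depth}_R M=0$, so $\mathrm{pd}_R M=2$ by Auslander--Buchsbaum; were $\Tor_1^R(M,N)=0$, rigidity of $\Tor$ over regular local rings (Auslander, Lichtenbaum) would force $\Tor^R_i(M,N)=0$ for all $i\ge 1$, and the Auslander depth formula would give $\mathrm{depth}_R(M\otimes_R N)=\mathrm{depth}_R M+\mathrm{depth}_R N-2=\mathrm{depth}_R N-2\le -1$, impossible since $M\otimes_R N\ne 0$.

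For ($\Rightarrow$) I would argue the contrapositive: assume $M,N\ne 0$ (otherwise $\Tor_1=0$ trivially), that $M$ and $N$ share no height-one associated prime, and that $\m\notin\mathrm{Ass}(M)\cup\mathrm{Ass}(N)$. Each of $M,N$ is torsion, so $\dim\supp M,\dim\supp N\le 1$ and hence $\mathrm{depth}\le 1$, while $\mathrm{depth}\ge 1$ because $\m$ is not associated; thus $\mathrm{depth}_R M=1$ and $\mathrm{pd}_R M=1$ (Auslander--Buchsbaum; compare Lemma~\ref{lem: curve}). So $M$ has a square presentation $0\to R^r\xrightarrow{\ \phi\ }R^r\to M\to 0$, whence $\Tor_1^R(M,N)=\ker(\phi\otimes 1_N)$. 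For any height-one prime $\mathfrak p$, the no-common-curve hypothesis gives $M_\mathfrak p=0$ whenever $N_\mathfrak p\ne 0$, and then $\phi_\mathfrak p$ is a surjective endomorphism of a free module, hence an isomorphism, so $\det\phi\notin\mathfrak p$; thus $\det\phi$ lies in no height-one prime of $\supp N$, in particular in no associated prime of $N$, i.e.\ $\det\phi$ is a nonzerodivisor on $N$. The adjugate identity $\mathrm{adj}(\phi)\,\phi=(\det\phi)I_r$ then shows $\phi\otimes 1_N$ is injective, so $\Tor_1^R(M,N)=0$.

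Assembling: $\shTor_1^X(\T,\N)\ne 0$ iff the local $\Tor_1$ is nonzero at some closed point or some generic point of a curve, iff some associated subvariety of $\T$ contains an associated subvariety of $\N$ or vice versa. The step I expect to be the main obstacle — and presumably the reason for the label of this Proposition — is the case $\m\in\mathrm{Ass}$: establishing $\Tor_1^R(M,N)\ne 0$ there seems genuinely to require the rigidity theorem for $\Tor$ over regular local rings together with Auslander--Buchsbaum (equivalently, the depth formula), rather than a direct computation; the common-curve case is a one-line localization to a DVR, and the remaining case is the standard determinant argument for a module of projective dimension one.
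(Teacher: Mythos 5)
Your proof is correct, but it takes a genuinely different route from the paper's. For the direction ``containment $\Rightarrow$ $\shTor_1\neq0$,'' the paper localizes at the generic point of the smaller associated subvariety, observes that the depth of the stalk of one sheaf is zero there, and then quotes a corollary of Lichtenbaum (if $\mathrm{depth}\,M=0$ and $\Tor_1^R(M,N)=0$ then $N$ is free) to get a contradiction; you instead split into the ``common height-one prime'' case (a direct DVR computation) and the ``$\m$ associated'' case (Tor-rigidity plus Auslander's depth formula). These are essentially the same underlying theorems re-packaged, so the difference here is largely cosmetic. The more substantive divergence is in the other direction: the paper combines its Lemma~\ref{lem: curve} (a bound $\mathrm{projdim}\leq1$ for modules supported in codimension one) with a cited theorem of Auslander on supports of Tor to derive $\mathrm{projdim}\,\T_p+\mathrm{projdim}\,\N_p\geq3$ and reach a contradiction, whereas you argue the contrapositive directly and elementarily: the hypotheses force $\mathrm{depth}=1$, hence a square presentation $0\to R^r\xrightarrow{\phi}R^r\to M\to0$, and the adjugate identity shows $\phi\otimes1_N$ is injective once $\det\phi$ is seen to avoid every associated prime of $N$. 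This determinant argument replaces a black-box citation with a short self-contained computation, which is a genuine gain. You also correctly flag a subtlety that the paper glosses over: ``irreducible subvariety in the support'' must be read as the closure of an associated point (allowing embedded points), not merely an irreducible component of the set-theoretic support; your example $\T=\O_C\oplus k(p)$, $\N=\O_D$ with $p\in C\cap D$ and $C\neq D$ shows the literal reading would make the statement false, and the paper's own proof of the forward direction (where it asserts $\mathfrak m\in\supp\T_p$ forces $p$ to be one of the $V_k$) is only valid under the associated-point reading.
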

\begin{proof}
Let ${\rm Supp}(\T) = \{V_1,\ldots ,V_m\}$ and ${\rm Supp}(\N)=\{W_1,\ldots ,W_q\}$, where each $V_i$ and each $W_j$ is either a point or an irreducible curve.  Now suppose that either $V_i\subseteq W_j$ for some $i,j$ or $W_i\subseteq V_j$ for some $i,j$.  Then after switching $\T$ and $\N$, if necessary and reindexing if necessary, we may assume that $V_1\subseteq W_1$.  We shall show $\shTor_1^X(\T,\N)$ is nonzero.  To see this, suppose that $\shTor_1^X(\T,\N)=0$.  Then since ${\rm Tor}$ commutes with localization we have
${\rm Tor}_1^R(\T_{V_1},\N_{V_1})=0$, where $R=\mathcal{O}_{X,V_1}$ is a regular local ring. We let $P$ denote the maximal ideal of $R$. Since $V_1$ is in the support of $\T$ we have that there is an element in $\T_{V_1}$ that is annihilated by $P$.  Hence the depth of $\T_{V_1}$ is zero.  By a result of Lichtenbaum \cite[Corollary 6]{Licht} we then have that the homological dimension of $\N_{V_1}$ is zero; that is, ${\rm Tor}_1^R(R/P, \N_{V_1})=0$. But this implies that $\N_{V_1}$ is torsion-free, which is impossible since $W_1\supseteq V_1$ is in the support of $\N$.  Thus we see that $\shTor^X_1(\T,\N)$ is nonzero, completing half of the proof.

Suppose that no $V_i$ contains a $W_j$ and that no $W_i$ contains a $V_j$.  We claim that $\shTor_1^X(\T,\N)=0$.  To see this, suppose that $\shTor_1^X(\T,\N)\neq 0$.  Then there is some $p\in X$ such that 
${\rm Tor}_1^R(\T_p,\N_p)\neq 0$, where $R=\mathcal{O}_{X,p}$.  Since ${\rm Tor}_1^R(\T_p,\N_p)\neq 0$ there is some $i$ and some $j$ such that $p\in V_i\cap W_j$. Let $\mathfrak{m}$ denote the maximal ideal of $R$. We note that $\mathfrak{m}$ cannot be an element of the support of $\T_p$ since we would then have $p=V_k$ for some $k$ and so $V_k\subseteq W_j$, which we have assumed not to be the case; similarly, $\mathfrak{m}$ cannot be an element of the support of $\N_p$.  In particular, the support of $\T_p$ consists of a finite set of height one primes and the same holds for the support of $\N_p$.  Moreover, by assumption the support of $\T_p$ and the support of $\N_p$ cannot share a common height one prime ideal. Thus, $\mathfrak{m}$ must be in the support of ${\rm Tor}_1^R(\T_p,\N_p)$, since the supports of $\T_p$ and $\N_p$ are both unions of height one primes and they do not share any common primes.  Then a result of Auslander's \cite[Theorem 2]{supp-Tor} gives that ${\rm projdim}(\T_p)+{\rm projdim}(\N_p)\ge 3$.  But Lemma \ref{lem: curve} gives that 
${\rm projdim}(\T_p)+{\rm projdim}(\N_p)\le 2$, a contradiction.  The result follows.
\end{proof}

The proof of Theorem~\ref{thm: main1} for surfaces is a series of reductions.
We first show that the result holds if both sheaves are supported on proper subvarieties of an affine $X$.
We then prove a lemma allowing us to restrict from a quasiprojective variety to an open affine subset.
Finally, in Theorem~\ref{thm:nolabel} we put the pieces together, using Corollary~\ref{cor: application} to handle behaviour at points of finite $\sigma$-order.

\begin{proposition}
\label{prop: supportonedim}
Let $k$ be an algebraically closed field of characteristic zero, let $X$ be a quasiprojective irreducible surface over $k$, and let $\sigma: X\to X$ be an automorphism of $X$.  If $\mathcal{M}$ and $\mathcal{N}$ are coherent sheaves on $X$ whose supports each have dimension at most one and $U$ is a non-empty affine open subset of $X$ then for each $i\ge 1$, the set of natural numbers $n$ for which
$${\rm Tor}^{\mathcal{O}_X(U)}_i(\mathcal{M}^{\sigma^n}(U),\mathcal{N}(U))\neq (0)$$ is a finite union of infinite arithmetic progressions up to addition and removal of finite sets.
\end{proposition}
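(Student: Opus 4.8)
The plan is to reduce the statement to a finite bookkeeping problem about the $\sigma$-dynamics of the irreducible components of $\supp\mathcal{M}$ and $\supp\mathcal{N}$, fed into two inputs already at hand: Proposition~\ref{prop: lichtenbaum}, which detects when $\shTor_1$ is nonzero, and Corollary~\ref{cor: application}, which controls $\Tor$ of $\sigma$-twisted modules over a fixed local ring. First I would note that, since $U$ is affine and $\shTor$ commutes with localization, $\Tor_i^{\mathcal{O}_X(U)}(\mathcal{M}^{\sigma^n}(U),\mathcal{N}(U))\neq0$ iff the coherent sheaf $\shTor_i^X(\mathcal{M}^{\sigma^n},\mathcal{N})$ has a point of $U$ in its support, and since $X$ is of finite type over $k$ this happens iff it has a closed point of $U$ in its support. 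Using smoothness of $X$ (so that the $\mathcal{O}_{X,p}$ are regular, of dimension $2$ at closed points and $\le 1$ at generic points of curves) one gets $\shTor_i^X(\mathcal{M}^{\sigma^n},\mathcal{N})=0$ for $i\ge3$, and that $\shTor_2^X(\mathcal{M}^{\sigma^n},\mathcal{N})$ is supported at closed points; so I would only ever examine stalks at closed points of $U$, and only for $i=1$ and $i=2$.

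For $i=1$ I would write $\supp\mathcal{M}$ as irreducible curves $C_1,\dots,C_r$ together with finitely many closed points $q_1,\dots,q_u$ off those curves (the remaining closed points of $\supp\mathcal{M}$ being embedded, hence not components), and likewise $\supp\mathcal{N}=\{D_1,\dots,D_s\}\cup\{w_1,\dots,w_t\}$. Applying Proposition~\ref{prop: lichtenbaum} to the local rings $\mathcal{O}_{X,p}$ — and using that a module supported in dimension $\le1$ over a $2$-dimensional regular local ring is never free, so that two such modules sharing a codimension-one support component automatically have nonzero $\Tor_1$ — I would show that $\shTor_1^X(\mathcal{M}^{\sigma^n},\mathcal{N})|_U\neq0$ iff one of: (a) $\sigma^{-n}(C_a)=D_b$ for some $a$ and some $b$ with $D_b\cap U\neq\emptyset$; (b) $\sigma^n(w_c)\in C_1\cup\cdots\cup C_r$ for some $c$ with $w_c\in U$; (c) $\sigma^{-n}(q_j)\in(D_1\cup\cdots\cup D_s)\cap U$ for some $j$; (d) $\sigma^{-n}(q_j)=w_c\in U$ for some $j,c$. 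Condition (a) is purely combinatorial: a single residue class modulo the period of $C_a$ if $C_a$ is $\sigma$-periodic, and at most one $n$ otherwise. Conditions (b), (d), and the ``lies in $D_1\cup\cdots\cup D_s$'' part of (c) are instances of the cyclic dynamical Mordell--Lang theorem for the \'etale automorphism $\sigma$ (or $\sigma^{-1}$) with target a closed subvariety of $X$, see~\cite{BGT}; and the intersection with $U$ in (c) is handled by removing the Mordell--Lang set of the closed set $(D_1\cup\cdots\cup D_s)\cap(X\setminus U)$, using that finite unions of arithmetic progressions form, modulo finite sets, a Boolean algebra. Each of (a)--(d) is therefore a finite union of infinite arithmetic progressions up to a finite set, and so is their union.

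For $i=2$ I would note that if $\shTor_2^X(\mathcal{M}^{\sigma^n},\mathcal{N})$ is nonzero at a closed point $p$, then $\Tor_2^{\mathcal{O}_{X,p}}(\mathcal{M}^{\sigma^n}_p,\mathcal{N}_p)\neq0$ forces $\mathrm{projdim}\ge2$ for both stalks, hence by Auslander--Buchsbaum that both have depth $0$; so $p$ lies in the finite set $T_{\mathcal{N}}$ of closed associated points of $\mathcal{N}$ and $\sigma^n(p)$ lies in the analogous finite set $T_{\mathcal{M}}$. For each of the finitely many $p\in T_{\mathcal{N}}\cap U$, I would study $\{\,n:\shTor_2^X(\mathcal{M}^{\sigma^n},\mathcal{N})_p\neq0\,\}$, which is contained in $\{\,n:\sigma^n(p)\in T_{\mathcal{M}}\,\}$; the latter is a finite union of arithmetic progressions up to a finite set (dynamical Mordell--Lang with finite target), and if it contains an infinite progression then, $T_{\mathcal{M}}$ being finite and $\sigma$ injective, $p$ must be $\sigma$-periodic, say of period $\ell$. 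Then $\rho:=(\sigma^{\ell})^{\#}$ is a $k$-algebra automorphism of $R:=\mathcal{O}_{X,p}$, a regular noetherian local $k$-algebra with $R/\mathfrak{m}=k$ and fraction field finitely generated over $k$; pushing the stalks through the ring isomorphisms $(\sigma^n)^{\#}$ identifies, for $n=n_0+k\ell$ in a fixed residue class modulo $\ell$, the condition ``$\shTor_2^X(\mathcal{M}^{\sigma^n},\mathcal{N})_p\neq0$'' with ``$\Tor_2^R(A^{\rho^k},B)\neq0$'' for fixed finitely generated $R$-modules $A$ and $B$, and Corollary~\ref{cor: application} makes this a finite union of arithmetic progressions in $k$ up to a finite set. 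Summing over the finitely many relevant $p$ and residue classes completes $i=2$.

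The step I expect to be the main obstacle is the $i=1$ analysis: one has to check that each of the support-theoretic conditions (a)--(d) is not merely necessary but also \emph{sufficient} for $\shTor_1^X(\mathcal{M}^{\sigma^n},\mathcal{N})|_U\neq0$, so that the answer is an honest finite union of arithmetic progressions rather than an uncontrolled subset of one; this is precisely where the ``if and only if'' in Proposition~\ref{prop: lichtenbaum} is indispensable, and it forces care about the distinction between generic and closed points of the $D_b$ and about tracking the intersection with $U$ at every stage. The $p$-adic machinery of the earlier sections does not appear here directly; it enters only through Corollary~\ref{cor: application}, used as a black box at the $\sigma$-periodic closed points.
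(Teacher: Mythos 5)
Your proof is correct. For $i=1$ it follows essentially the same route as the paper: decompose $\supp\M$ and $\supp\N$ into irreducible components (points and curves), invoke Proposition~\ref{prop: lichtenbaum} to characterize nonvanishing of $\shTor_1$ in terms of containments among these components, and reduce each point/curve combination to the cyclic \'etale dynamical Mordell--Lang theorem of \cite{BGT}; the paper's sets $\mathcal{S}(i,j)$, $\mathcal{S}'(i,j)$ and its four-way case analysis correspond exactly to your conditions (a)--(d), and your bookkeeping of the constraint $\sigma^{-n}(V_i)\cap U\neq\emptyset$ (via the Boolean algebra of DML sets modulo finite sets) is in fact a bit more careful than the paper's.

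For $i\geq 2$ your argument genuinely diverges from the paper's, to your advantage. The paper dimension-shifts: taking $0\to\M'\to\F\to\M\to 0$ with $\F$ locally free, it notes $\Tor_2(\M^{\sigma^n},\N)\cong\Tor_1((\M')^{\sigma^n},\N)$ and declares the result follows from the $i=1$ case. As written this reduction does not quite close, since the syzygy $\M'$ is torsion-free and hence has full support on the surface, whereas the $i=1$ argument requires both sheaves to be supported in dimension $\leq 1$. Your approach sidesteps this entirely: working over the two-dimensional regular local rings $\mathcal{O}_{X,p}$, Auslander--Buchsbaum shows $\Tor_2((\M^{\sigma^n})_p,\N_p)\neq 0$ forces both stalks to have depth $0$, so $p$ is a closed associated point of $\N$ and $\sigma^n(p)$ one of $\M$; this leaves only finitely many $p$ to consider, handled by DML with finite target plus Corollary~\ref{cor: application} at periodic points. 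This is more elementary and more complete. One small remark: the appeal to Corollary~\ref{cor: application} is actually unnecessary here, because your implication is an equivalence --- if both $(\M^{\sigma^n})_p$ and $\N_p$ have depth $0$, then $\Tor_2\neq 0$ automatically (pair a socle element of one module against the maximal-ideal entries of the minimal second-syzygy matrix of the other), so $\{n : \shTor_2^X(\M^{\sigma^n},\N)_p\neq 0\}$ already equals $\{n:\sigma^n(p)\in T_\M\}$ and the DML-with-finite-target statement suffices on its own. Invoking the corollary anyway is harmless and keeps the argument robust against exactly the kind of sufficiency worry you rightly flagged for $i=1$.
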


\begin{proof}
We first prove the result when $i=1$. Write ${\rm Supp}(\M) = \{V_1,\ldots ,V_m\}$ and ${\rm Supp}(\N)=\{W_1,\ldots ,W_q\}$, where each $V_i$ and each $W_j$ is either a point or an irreducible curve.  Then since $\supp\M^{\sigma^n} = \{\sigma^{-n}(V_1),\ldots ,\sigma^{-n}(V_q)\}$, by Proposition \ref{prop: lichtenbaum}, we have that
\[\Tor_1^{\mathcal{O}_X(U)}(\M^{\sigma^n}(U),\N(U))\neq 0\]
 if and only if $\sigma^{-n}(V_i)\subseteq W_j$ or $\sigma^{-n}(V_i)\supseteq W_j$ for some $i$ and $j$ with $\sigma^{-n}(V_i)\cap U$ and $W_j\cap U$ non-empty.  For each $i\le m$ and $j\le q$ such that $V_i\cap U$ and $W_j\cap U$ are non-empty we let
$\mathcal{S}(i,j)$ denote the set of integers $n$ for which $\sigma^{-n}(V_i)\subseteq W_j$ and we let
$\mathcal{S}'(i,j)$ denote the set of integers $n$ for which $\sigma^{-n}(V_i)\supseteq W_j$.  Then
$$\{n\in \mathbb{Z}\colon \Tor_1^{\mathcal{O}_X(U)}(\M^{\sigma^n}(U),\N(U))\neq 0\} = \bigcup_{i=1}^m \bigcup_{j=1}^q  \left(\mathcal{S}(i,j)\cup \mathcal{S}'(i,j)\right).$$ 
Thus it is sufficient to show that for each $i$ and $j$ both $\mathcal{S}(i,j)$ and $\mathcal{S}'(i,j)$ are both a finite union of complete doubly infinite arithmetic progressions up to addition and removal of finite sets.  By symmetry it is enough to just show this for $\mathcal{S}(i,j)$.  Since $V_i$ is either a point or a curve and $W_j$ is either a point or a curve, there are four cases to consider. If $V_i$ is a curve and $W_j$ is a point, then $\mathcal{S}(i,j)$ is empty.  If $V_i$ is a point and $W_i$ is a point then $\mathcal{S}(i,j)=\{n\colon \sigma^{-n}(V_i)=W_j\}$.  This is easily seen to either the empty set, a single integer, or a single arithmetic progression.  If $V_i$ is a curve and $W_i$ is a curve then $\mathcal{S}(i,j)=\{n\colon \sigma^{-n}(V_i)=W_j\}$, which is again either the empty set, a single integer, or a single arithmetic progression. Finally, if
$V_i$ is a point and $W_i$ is a curve then $\mathcal{S}(i,j)=\{n\colon \sigma^{-n}(V_i)\in W_j,~\sigma^{-n}(V_i)\in U\}$.  Since $U^c$ and $W_j$ are both Zariski closed, we see that this set is a finite union of arithmetic progressions up to addition and subtraction of finite sets (cf. \cite{BGT}).  Thus we have shown that
the set of $n$ for which $\shTor_1^X(\M^{\sigma^n},\N)\neq 0$ is a finite union of arithmetic progressions up to addition and removal of finite sets. 

We now quickly argue that for each $i\ge 1$, the set of $n$ for which ${\rm Tor}_i^{\mathcal{O}_X(U)}(\M^{\sigma^n}(U),\N(U))\neq 0$ is a finite union of arithmetic progressions up to addition and removal of finite sets.  We have just proven the case when $i=1$.  Since $X$ is a smooth surface, it remains only to prove the case when $i=2$.  
The set of $n$ for which ${\rm Tor}_1^{\mathcal{O}_X(U)}(\M^{\sigma^n}(U),\N(U))\neq 0$ is a finite union of arithmetic progressions up to addition and removal of finite sets. Then we have a short exact sequence
\[
0\to\M'\to\F\to\M\to0
\]
with $\F$ locally free and coherent and $\M'$ coherent. Since $\sigma^n(\F)$ is also locally free, we see ${\rm Tor}_i^{\mathcal{O}_X(U)}(\F^{\sigma^n}(U),\N(U))=0$ for all $i>0$. Then ${\rm Tor}_{1}^{\mathcal{O}_X(U)}((\M')^{\sigma^n}(U),\N(U))$ is isomorphic to ${\rm Tor}_{2}^{\mathcal{O}_X(U)}(\M^{\sigma^n}(U),\N(U))$, and so we obtain the desired result.
\end{proof}

\begin{lemma}
\label{lem: countable}
Let $k$ be an uncountable algebraically closed field, let $X$ be an irreducible quasiprojective variety over $k$, and let $F$ be a finite subset of $X$ and let $T$ be a countably infinite subset of $X$ with $F\cap T=\emptyset$. Then there is a rational function $f$ on $X$ with the following properties:
\begin{enumerate}
\item $f$ is regular at all points in $F\cup T$;
 \item $f(x)=0$ for $x\in F$;
 \item $f$ does not vanish at any point in $T$;
 \item $X\setminus V(f)$ is affine. 
 \end{enumerate}
\end{lemma}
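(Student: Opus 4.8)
The plan is to build $f$ as a restriction to $X$ of a suitable rational function on an ambient projective space, or more precisely to produce first a hypersurface section of the projective closure that passes through every point of the finite set $F$, misses every point of the countable set $T$, and cuts out an affine complement. First I would fix a projective closure $\bbar X \subseteq \mathbb{P}^N$ and let $H_\infty = \bbar X \setminus X$ be the (finite union of) hyperplane-at-infinity component(s). The condition that $X \setminus V(f)$ be affine amounts to asking that the divisor of zeros of $f$, together with $H_\infty$, be the support of an ample divisor on $\bbar X$; a clean way to arrange this is to take $f = G/L^{\deg G}$ where $L$ is a fixed linear form cutting out (a hyperplane containing) $H_\infty$ and $G$ is a homogeneous polynomial of the same degree $e$, chosen so that the hypersurface $V(G)$ contains $H_\infty$ set-theoretically. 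Then $X \setminus V(f) = \bbar X \setminus V(G)$, which is affine since $V(G)$ is an ample (indeed very ample, for $e$ large) divisor on $\bbar X$. So conditions (1) and (4) are handled structurally, and it remains to choose $G$ of large degree $e$ so that $G$ vanishes on $F$ (this is condition (2) and, together with the choice above, forces $G$ to vanish on $F \cup H_\infty$) but $G$ does not vanish at any of the countably many points of $T$ (condition (3)), while also $G$ does not vanish at any point of $F\cup T$ to infinite order in the sense that $f$ remains regular there — but since $L$ is nonzero at each point of $F \cup T$ (we may choose $L$ to avoid these, as they lie in $X$, not at infinity), regularity of $f$ at $F\cup T$ is automatic.

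The heart of the argument is the following claim: for $e \gg 0$, the linear system of degree-$e$ forms vanishing on $F \cup H_\infty$ is a finite-dimensional $k$-vector space $W_e$ of dimension tending to infinity, and for each fixed point $t \in T$ the sub-locus $\{G \in W_e : G(t) = 0\}$ is a proper linear subspace of $W_e$, hence a proper closed subset of the affine space $\mathbb{A}(W_e)$. The key step is a Baire-category / counting argument: $\mathbb{A}(W_e)(k)$ is not a countable union of proper linear subspaces when $k$ is uncountable and $\dim W_e \geq 1$. Indeed, a $k$-vector space of dimension $\geq 1$ over an uncountable field $k$ cannot be written as a countable union of proper subspaces — this is a standard fact (each proper subspace misses ``most'' lines through the origin, and there are uncountably many such lines). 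Therefore there exists $G \in W_e$ avoiding all of the countably many hyperplanes $\{G(t) = 0\}$, $t\in T$, which is exactly condition (3); and $G \in W_e$ gives conditions (1), (2), (4) as arranged above. To make the claim ``$\{G \in W_e : G(t)=0\}$ is a \emph{proper} subspace'' valid, one needs that not every form vanishing on $F \cup H_\infty$ also vanishes at $t$; this holds for $e$ large because $F \cup H_\infty$ and $t$ are disjoint closed subsets, so by Serre vanishing (or just by a direct interpolation argument, e.g. multiplying a high power of $L$ by an appropriate form cutting out a hypersurface through $F$ but not $t$) the restriction map $W_e \to k$, $G \mapsto G(t)$, is surjective for $e \gg 0$ — and one can pick a single $e$ working simultaneously for all $t \in T$ since the bound only depends on $F \cup H_\infty$, not on $t$.

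The main obstacle I expect is the bookkeeping around the affineness condition (4): one has to be careful that replacing the naive ``$f$ vanishes on $F$'' with ``the zero hypersurface of $f$ is ample on $\bbar X$'' does not conflict with the interpolation requirements, i.e. that the space $W_e$ of degree-$e$ forms vanishing on $F\cup H_\infty$ is genuinely large (dimension $\to\infty$) rather than, say, being forced to vanish identically on some component of $\bbar X$. This is fine because $\bbar X$ is irreducible (as $X$ is), $F \cup H_\infty$ is a proper closed subset, and Serre vanishing controls $h^0(\bbar X, \mathcal{I}_{F\cup H_\infty}(e))$ for $e \gg 0$; but it requires stating things in terms of the ideal sheaf on $\bbar X$ rather than forms in $\mathbb{P}^N$, and checking that a general member of the system is reduced and does not contain any component of $\bbar X$. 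A secondary subtlety is ensuring $f$ is \emph{regular} (not merely defined) at points of $T$: this is why we insist $L$ be nonzero at every point of $F \cup T$, which we may do since those points lie in the affine part $X$; then $f = G/L^e$ has no pole along any point of $F \cup T$. Once these structural points are pinned down, the uncountability of $k$ does all the remaining work through the ``a vector space over an uncountable field is not a countable union of proper subspaces'' lemma.
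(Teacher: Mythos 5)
Your plan rests on the same key fact as the paper's proof: an uncountable field $k$ forces $\mathbb{P}^N(k)$ (equivalently, a $k$-vector space of positive dimension) not to be a countable union of proper subvarieties, so one can interpolate through the finite set $F$ while avoiding the countably many hyperplane constraints indexed by $T$. The realization differs, and yours is heavier than it needs to be: the paper stays with linear forms throughout, choosing by the counting argument one linear form $L_y$ through each $y\in F$ with zero locus missing $T$, plus a further linear form $x_0$ with zero locus missing $F\cup T$, and then taking $f = \prod_{y\in F} L_y / x_0$. This dispenses with Serre vanishing, ideal sheaves, and the surjectivity-of-$W_e\to k$ step you worry about, because each point of $F$ is handled by its own hyperplane.

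You also make a concrete error. You describe $H_\infty = \bbar{X}\setminus X$ as ``the (finite union of) hyperplane-at-infinity component(s)'' and require a linear form $L$ cutting out a hyperplane containing $H_\infty$. But $H_\infty$ is simply a proper closed subset of $\bbar{X}$ and need not be contained in any hyperplane, so such an $L$ generally does not exist. The requirement is also unnecessary: you only need $L$ to be nonzero on $F\cup T$ (again by counting), together with $G$ vanishing on $H_\infty\cup F$ but at no point of $T$. Then the locus where $f = G/L^e$ is a unit equals $\bbar{X}\setminus V(GL)$; since $V(G)\supseteq H_\infty$, this locus is already contained in $X$ and is the complement of a hypersurface section of the projective variety $\bbar{X}$, hence affine. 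So keep your instinct of forcing $V(G)\supseteq H_\infty$ (a point the paper's brief proof glosses over), but drop the parallel demand on $L$.
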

\begin{proof}
We fix an embedding $X\to \mathbb{P}^n$ and take homogeneous coordinates $[x_0:\cdots ,x_n]$ for $\mathbb{P}^n$. We first claim that there is a homogeneous one-form $L:=c_0 x_0+\cdots +c_n x_n$ with $[c_0:\cdots :c_n]\in \mathbb{P}^n$ such that the zero locus of $L$ is disjoint from $F\cup T$.   To see this, observe that the collection of homogeneous one-forms can be identified with $\mathbb{P}^n(k)$.  Then the set of forms that vanish at a point $y\in \mathbb{P}^n$ is a proper closed subset of $\mathbb{P}^n(k)$. Since $F\cup T$ is countable and $\mathbb{P}^n(k)$ cannot be written as a countable union of proper subvarieties we see there is some homogeneous one form whose zero locus completely avoids $F\cup T$.  By changing variables, we may assume that $L=x_0$.  Now for each $y\in F$, we consider the collection of homogeneous one-forms that vanish at $y$. As before, this can be identified with $\mathbb{P}^{n-1}$ and as before, we see there is some form $L_y$ that vanishes at $y$ and whose zero locus avoids $T$.  Now $X-V(L_y)$ is a non-empty open affine neighbourhood of $X$.  We now let
$f=\prod_{y\in F} L_y/x_0$.  Then $f$ is a rational function on $X$ satisfying properties (1)--(4).
\end{proof}
We now reduce the general case to the preceding result by using Corollary \ref{cor: application}.
\begin{theorem}
Theorem \ref{thm: main1} holds for smooth surfaces.
\end{theorem}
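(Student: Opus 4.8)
The plan is to reduce the statement, in stages, to the two cases already established: Proposition~\ref{prop: supportonedim}, which treats a pair of coherent sheaves supported in dimension at most one on an affine open, and Corollary~\ref{cor: application}, which treats behaviour at a point fixed by a power of $\sigma$. I use throughout that the subsets of $\ZZ$ which are finite unions of doubly infinite arithmetic progressions up to addition and removal of finite sets form a Boolean algebra (an arithmetic progression intersected with another is an arithmetic progression or empty, and the complement of one is a finite union of them), so it suffices to exhibit the set in question as a finite Boolean combination of such sets. First I reduce to $i=1$: since $X$ is a smooth surface, $\shTor^X_i=0$ for $i\ge 3$, and for $i=2$ one picks a short exact sequence $0\to\mathcal{M}'\to\mathcal{F}\to\mathcal{M}\to 0$ with $\mathcal{F}$ locally free (possible as $X$ is quasiprojective); since $(\sigma^n)^*\mathcal{F}$ is again locally free, the long exact Tor sequence gives $\shTor^X_2((\sigma^n)^*\mathcal{M},\mathcal{N})\cong\shTor^X_1((\sigma^n)^*\mathcal{M}',\mathcal{N})$ for all $n$, so the case $i=2$ for $(\mathcal{M},\mathcal{N})$ follows from the case $i=1$ for $(\mathcal{M}',\mathcal{N})$.

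Assume now $i=1$. If $\mathcal{M}$ or $\mathcal{N}$ is locally free then $\shTor^X_1((\sigma^n)^*\mathcal{M},\mathcal{N})=0$ for all $n$ and there is nothing to prove; otherwise the locus $Z$ where $\mathcal{M}$ fails to be locally free (resp.\ $W$ for $\mathcal{N}$) is a proper closed subset of dimension at most one, which I split into its one-dimensional part, a finite union of curves, and a finite set of closed points $F_{\mathcal{M}}$ (resp.\ $F_{\mathcal{N}}$). Away from $F_{\mathcal{M}}$ the torsion sequence $0\to\mathcal{M}_{\tor}\to\mathcal{M}\to\mathcal{M}/\mathcal{M}_{\tor}\to 0$ splits locally, so off the finite set $\sigma^{-n}(F_{\mathcal{M}})\cup F_{\mathcal{N}}$ one has $\shTor^X_1((\sigma^n)^*\mathcal{M},\mathcal{N})=\shTor^X_1((\sigma^n)^*(\mathcal{M}_{\tor}),\mathcal{N}_{\tor})$, a $\shTor$ of sheaves supported in dimension at most one. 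For the latter, Proposition~\ref{prop: lichtenbaum} shows nonvanishing occurs exactly when some component of $\sigma^{-n}(\supp(\mathcal{M}_{\tor}))$ contains, or is contained in, some component of $\supp(\mathcal{N}_{\tor})$; covering $X$ by finitely many affine opens chosen via Lemma~\ref{lem: countable} (so that each relevant curve is met and the relevant finite-order points off those curves are retained, noting that points leaving a fixed affine contribute only a finite union of arithmetic progressions by \cite{BGT}), Proposition~\ref{prop: supportonedim} then shows the set of such $n$ is a finite union of arithmetic progressions up to finite sets, each constituent condition (curve equals curve, point equals point, or point lies on a curve) being empty, a single progression, or, in the point-meets-curve case, a finite union of progressions by the cyclic dynamical Mordell--Lang theorem for \'etale maps.

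This leaves the contributions from the finitely many distinguished points. For each $w\in F_{\mathcal{N}}$, and symmetrically each $v\in F_{\mathcal{M}}$ (there one works along the $\sigma$-orbit of $v$, for $n$ in a fixed residue class the point $\sigma^{-n}(v)$ being constant), I bound the set of $n$ for which $\shTor^X_1((\sigma^n)^*\mathcal{M},\mathcal{N})$ has a nonzero stalk at the relevant point. If its $\sigma$-orbit is infinite, nonvanishing forces that point into the opposite non-locally-free locus, which meets the finite part of that locus only finitely often and each curve of it only for $n$ in a finite union of progressions (again \cite{BGT}); if instead $\sigma^a$ fixes the point for some $a\ge 1$, then for each residue $r$ modulo $a$ I invoke Corollary~\ref{cor: application} with $R=\mathcal{O}_{X,w}$ (a regular noetherian local ring with residue field $k$ and fraction field $k(X)$ finitely generated over $k$), the $k$-automorphism $\sigma^a|_R$, and the finitely generated modules $(\sigma^r)^*\mathcal{M}_w$ and $\mathcal{N}_w$, obtaining that the relevant $n\equiv r\pmod a$ form a finite union of progressions up to finite sets. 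Assembling the curve part with these finitely many point parts and closing under Boolean operations shows $\{n:\shTor^X_1((\sigma^n)^*\mathcal{M},\mathcal{N})\ne0\}$ is a finite union of doubly infinite arithmetic progressions up to addition and removal of finite sets, as required.

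The hardest part, I expect, will be the bookkeeping in the middle steps: faithfully reducing an arbitrary pair $(\mathcal{M},\mathcal{N})$ --- possibly of full support and with non-locally-free behaviour along curves --- to the hypotheses of Proposition~\ref{prop: supportonedim} on an affine chart, keeping precise track of the non-locally-free loci, of exactly which $n$ are lost upon restricting to a fixed affine open (and checking this is only a finite union of arithmetic progressions, via \cite{BGT}), and of the finitely many isolated bad points, correctly setting up Corollary~\ref{cor: application} for those of finite $\sigma$-order and checking nothing is double-counted or missed when the Boolean combination is reassembled.
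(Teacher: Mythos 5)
Your proposal follows the same broad contour as the paper's (reduce to $i=1$ and then to torsion parts supported in dimension $\leq 1$; invoke Proposition~\ref{prop: supportonedim}/Proposition~\ref{prop: lichtenbaum} for the curve part and Corollary~\ref{cor: application} at fixed points), but the organization is genuinely different. The paper does \emph{not} work pointwise or with an affine cover: it first replaces $\sigma$ by an iterate fixing the finitely many periodic points $p_1,\dots,p_d$ of $\supp\M\cup\supp\N$, then for the remaining $n$ picks a \emph{witness} $q_n$ and uses Lemma~\ref{lem: countable} to find a \emph{single} affine $U$ avoiding the $p_j$ but containing the entire countable orbit set $T=\{\sigma^j(q_n)\}$; the reduction to torsion parts is then carried out on that one $U$ via Lemma~\ref{lem: torsion free}, whose hypothesis (no periodic associated point) is arranged precisely by the choice of $U$. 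You instead distinguish the finitely many isolated points $F_\M,F_\N$ of the non-locally-free loci, reduce to torsion parts by a local splitting argument, and patch over a finite cover. This is a legitimate alternative decomposition, and the pieces you cite (reduction to $i=1$, Proposition~\ref{prop: lichtenbaum}, Proposition~\ref{prop: supportonedim}, Corollary~\ref{cor: application}, the cyclic DML theorem of \cite{BGT}, closure of the relevant class of subsets of $\ZZ$ under Boolean operations) are all exactly the right tools.

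There is, however, a gap in your treatment of a non-periodic distinguished point $w\in F_\N$ (and symmetrically $v\in F_\M$). You argue that nonvanishing of $\shTor_1((\sigma^n)^*\M,\N)_w$ forces $\sigma^n(w)$ into the non-locally-free locus $Z_\M$, and that $\{n:\sigma^n(w)\in Z_\M\}$ is a finite union of arithmetic progressions up to a finite set. That only gives \emph{containment} of the set you care about in a finite union of progressions, not that it \emph{is} one; a priori the $\Tor$ could vanish sporadically along the progression, leaving you with a set that is not eventually periodic. To close the gap one must show that on each progression $P$ (of difference $a$) for which $\sigma^n(w)$ lies on some $\sigma^a$-invariant curve $C\subseteq Z_\M$, the condition $\Tor_1((\sigma^n)^*\M,\N)_w\neq 0$ is in fact independent of $n\in P$ up to finitely many exceptions. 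The key observation, which you omit, is that the curve germ $\sigma^{-n}(C)$ through $w$ is the \emph{same} germ $D$ for all $n\in P$ (since $\sigma^a(C)=C$), and for all but finitely many $n\in P$ the stalk $(\M_\tor)_{\sigma^n(w)}$ is Cohen--Macaulay of dimension one supported on that germ; combined with Lichtenbaum's and Auslander's results one then sees the vanishing of the relevant $\Tor_1$ at $w$ depends only on the fixed geometric relationship between $D$ and $\N_w$, not on $n$. The paper's device of choosing $U$ to avoid periodic points and to contain all witnesses is exactly what lets it sidestep this: Lemma~\ref{lem: torsion free} then kills the torsion-free contributions outright (for all but finitely many $n$), so it never has to analyze nonvanishing at an isolated bad point along a recurrent orbit. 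You should also be a little more careful with the definition of $F_\M$: for the local splitting of the torsion sequence you need the finitely many points where $\M/\M_{\tor}$ fails to be locally free, which need not be isolated in the non-locally-free locus of $\M$ (they may sit on curves of $Z$); this is a minor bookkeeping fix of the kind you anticipated.

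Also a small point: for the curve part you do not need the countable-avoidance Lemma~\ref{lem: countable} at all; any finite affine cover of $X$ suffices once the sheaves have one-dimensional support, since $\shTor$ vanishes iff it vanishes on every member of the cover, and Proposition~\ref{prop: supportonedim} applies chart by chart. Lemma~\ref{lem: countable} is specific to the paper's single-witness-affine strategy.
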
\label{thm:nolabel}
\begin{proof}
We first note that by replacing $X$ by $X\times_k k'$ for some uncountable algebraically closed extension $k'$ of $k$ we may assume that our base field $k$ is uncountable and algebraically closed.

Let $i\ge 1$ and let $p_1,\ldots ,p_d$ denote the $\sigma$-periodic points that are elements of $\supp\M\cup \supp\N$.  We may replace $\sigma$ by an iterate and assume that $p_1,\ldots ,p_d$ are fixed points of $\sigma$. Then for $j=1,\ldots ,d$, we let
$$\mathcal{X}_j = \{n\in \mathbb{Z}\colon {\rm Tor}_i^{\mathcal{O}_{X,p_j}}((\M^{\sigma^n})_{p_j},\N_{p_j})\neq 0\}.$$  By Corollary \ref{cor: application} we have that $\mathcal{X}_j$ is, up to addition and removal of a finite set, a finite union of arithmetic progressions.
We next let $$\mathcal{U}= \{n\in \mathbb{Z}\colon {\rm Tor}^{\mathcal{O}_{X,q}}_i((\M^{\sigma^n})_{q},\N_{q})\neq 0~{\rm for~some}~q\in X\setminus \{p_1,\ldots ,p_d\}\}.$$
Then the set of integers $n$ for which $\shTor_i^X(\M^{\sigma^n},\N)\neq 0$ is the (not necessarily disjoint) union of the $\mathcal{X}_j$ and $\mathcal{U}$, and so it is sufficient to show that $\mathcal{U}$ is, up to addition and removal of a finite set, a finite union of arithmetic progressions.
Now for each $n\in \mathcal{U}$ we can pick some point $q_n\in X\setminus \{p_1,\ldots ,p_d\}$ that witnesses the non-vanishing of $\shTor_i^X(\M^{\sigma^n},\N)$; we then let $T=\{\sigma^j(q_n)\colon j\in \mathbb{Z}, n\in \mathcal{U}$.  Then $T$ is a countable subset of $X$ that avoids $p_1,\ldots ,p_d$ and so by Lemma \ref{lem: countable}, there is a rational function $f$ that is regular at $p_1,\ldots, p_d$ and at all points in $T$ with the following properties:
\begin{enumerate}
 \item $f(p_i)=0$ for $i=1,\ldots ,d$;
 \item $f$ does not vanish at any point in $T$;\item $U:=X\setminus V(f)$ is affine. 
 \end{enumerate}
 We note that $\sigma$ need not induce an automorphism of $U$.  
 
 For $n\in \mathcal{U}$ we have that
${\rm Tor}^U_i(\M^{\sigma^n}(U),\N(U))$ does not vanish.
Now we have a short exact sequence of sheaves
$$0\to \N'\to \N\to \N''\to 0$$ with $\N'$ having support of dimension at most one and $\N''$ being torsion free.
By Lemma \ref{lem: torsion free}, we have that 
 ${\rm Tor}_j^U(\M^{\sigma^n}(U),\N''(U))=0$ for all but finitely many $n$ for all $j\ge 1$.  This then gives that
 $${\rm Tor}_i^U(\M^{\sigma^n}(U),\N(U))\cong {\rm Tor}_i^U(\M^{\sigma^n}(U),\N'(U))$$
 for all $i\geq1$ and all but finitely many $n$. Now we similarly have an exact sequence
 $$0\to \M'\to \M\to \M''\to 0$$ with $\M'$ having support of dimension at most one and $\M''$ being torsion free.
 Then by Lemma \ref{lem: torsion free} we have
 ${\rm Tor}_j^U((\M'')^{\sigma^n}(U),\N''(U))=0$ for all but finitely many $n$ for all $j\ge 1$ and so we get an isomorphism
  $${\rm Tor}_i^U(\M^{\sigma^n}(U),\N(U))\cong {\rm Tor}_i^U(\M^{\sigma^n}(U),\N'(U))\cong {\rm Tor}_i^U((\M')^{\sigma^n}(U),\N'(U)).$$
  By Proposition \ref{prop: supportonedim}, since $\dim\supp\M'$ and $\dim\supp \N'\le 1$ we then see that the set of $n$ for which ${\rm Tor}_i((\M')^{\sigma^n}(U),\N'(U))$ is, up to addition and removal of a finite set, a finite union of arithmetic progressions.  The result now follows.
\end{proof}

\begin{lemma}
\label{lem: torsion free}
Let $k$ be an algebraically closed field of characteristic zero and let $X$ be an irreducible smooth quasiprojective surface over $k$, let $U$ be a non-empty affine open subset of $X$, and let $\sigma\in {\rm Aut}_k(X)$. If $\F$ is a torsion-free coherent sheaf on $X$ and $\N$ is a coherent sheaf on $X$ such that there are no $\sigma$-periodic points in $X$ that are elements of the support of $\N$ then  
 ${\rm Tor}_j^U(\F^{\sigma^n}(U),\N(U))=0$ for all but finitely many $n$ for all $j\ge 1$ and
 ${\rm Tor}_j^U(\N^{\sigma^n}(U),\F(U))=0$ for all but finitely many $n$ for all $j\ge 1$. 
\end{lemma}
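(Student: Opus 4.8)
First I would translate the statement into one about supports. Since $U$ is affine with coordinate ring $R=\mathcal{O}_X(U)$, for any coherent sheaves $\mathcal{G},\mathcal{H}$ on $X$ we have $\Tor_j^R(\mathcal{G}(U),\mathcal{H}(U))=0$ if and only if $\shTor_j^X(\mathcal{G},\mathcal{H})$ has empty support in $U$, that is, $\Tor_j^{\mathcal{O}_{X,z}}(\mathcal{G}_z,\mathcal{H}_z)=0$ for every $z\in U$. Taking $(\mathcal{G},\mathcal{H})=((\sigma^n)^*\F,\N)$ handles the first assertion, and taking $(\mathcal{G},\mathcal{H})=(\F,(\sigma^n)^*\N)$ handles the second, since $\shTor$ is symmetric and hence $\shTor_j^X((\sigma^n)^*\N,\F)=\shTor_j^X(\F,(\sigma^n)^*\N)$. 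So it is enough to prove that, for each $j\ge 1$, the support of $\shTor_j^X((\sigma^n)^*\F,\N)$, respectively of $\shTor_j^X(\F,(\sigma^n)^*\N)$, meets $U$ for only finitely many $n$.

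The crux is the following local computation. Let $(R,\mathfrak{m})$ be a regular local ring of dimension at most $2$, let $P$ be a finitely generated torsion-free $R$-module, and let $Q$ be a finitely generated $R$-module. Then $\Tor_i^R(P,Q)=0$ for all $i\ge 2$, and if moreover $\operatorname{depth}_R Q\ge 1$ then also $\Tor_1^R(P,Q)=0$. For the first part, $P$ embeds in a free module (being torsion-free and finitely generated over a domain), so $\mathfrak{m}\notin\mathrm{Ass}(P)$, whence $\operatorname{depth}_R P\ge 1$ and ${\rm projdim}_R P\le 1$ by Auslander--Buchsbaum. For the second, observe first that $P_{\mathfrak{p}}$ is free for every non-maximal prime $\mathfrak{p}$ (a finitely generated torsion-free module over a field or a discrete valuation ring is free), so $\Tor_1^R(P,Q)$ is supported only at $\mathfrak{m}$ and therefore has finite length; now choose a $Q$-regular element $t\in\mathfrak{m}$ and apply $\Tor_\bullet^R(P,-)$ to $0\to Q\xrightarrow{\,t\,}Q\to Q/tQ\to 0$: since $\Tor_2^R(P,Q/tQ)=0$, multiplication by $t$ is injective on the finite-length module $\Tor_1^R(P,Q)$, and an injective endomorphism of a finite-length module is bijective, so Nakayama's lemma gives $\Tor_1^R(P,Q)=0$. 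In particular, $\Tor_1^R(P,Q)\ne 0$ forces both that $P$ is not free and that $\operatorname{depth}_R Q=0$, i.e.\ $\mathfrak{m}\in\mathrm{Ass}(Q)$.

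Next I would identify the supports. Since $X$ is a smooth surface and $\F$, hence every $(\sigma^n)^*\F$, is torsion-free, the remark following Lemma~\ref{lem: curve} gives $\shTor_2^X((\sigma^n)^*\F,\N)=0$, and $\shTor_j^X((\sigma^n)^*\F,\N)=0$ for $j\ge 3$ by dimension; the same applies to $\shTor_j^X(\F,(\sigma^n)^*\N)$. So only $j=1$ remains. A torsion-free coherent sheaf on a smooth surface is locally free outside a finite set of closed points (its non-free locus is closed and contains no point of codimension $\le 1$, where the local ring is a field or a DVR): let $Z$ be this set for $\F$, so that $(\sigma^n)^*\F$ is locally free outside $\sigma^{-n}(Z)$, and let $Y$ be the finite set of closed points of $X$ lying in $\mathrm{Ass}(\N)$, so that the closed points of $\mathrm{Ass}((\sigma^n)^*\N)=\sigma^{-n}(\mathrm{Ass}(\N))$ are exactly those of $\sigma^{-n}(Y)$. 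Applying the local computation at each $z\in U$, and using that $\operatorname{depth}_{\mathcal{O}_{X,z}}\N_z=0$ precisely when $z\in\mathrm{Ass}(\N)$ is a closed point, we obtain that $\supp\shTor_1^X((\sigma^n)^*\F,\N)$ is contained in $\sigma^{-n}(Z)\cap Y$, and that $\supp\shTor_1^X(\F,(\sigma^n)^*\N)$ is contained in $Z\cap\sigma^{-n}(Y)$.

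Finally, finiteness. Since $\supp\N$ contains no $\sigma$-periodic point and $Y\subseteq\supp\N$, no point of $Y$ is $\sigma$-periodic. Hence for each pair $(z,y)\in Z\times Y$ there is at most one integer $n$ with $\sigma^n(y)=z$ — two of them would force $y$ to be $\sigma$-periodic — so only finitely many $n$ satisfy $\sigma^{-n}(Z)\cap Y\ne\emptyset$, and symmetrically only finitely many $n$ satisfy $Z\cap\sigma^{-n}(Y)\ne\emptyset$. For every other $n$ the two $\shTor$-sheaves above vanish on $U$, so the corresponding $\Tor$-groups over $\mathcal{O}_X(U)$ vanish, which is the assertion. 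The one genuinely substantive point is the local vanishing of $\Tor_1$ against a torsion-free module over a regular local surface ring — intuitively, such a $\Tor_1$ can only be nonzero at embedded or isolated associated points of the other module, never along curves — and with that isolated, the rest is bookkeeping that I do not expect to cause difficulty.
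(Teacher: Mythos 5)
Your proof is correct and reaches the same intermediate conclusion as the paper — that the obstruction to vanishing of $\Tor_1$ is confined to the intersection of $\sigma^{-n}$ of one finite set of closed points with another — but by a genuinely different route. The paper embeds $\F$ in a locally free sheaf $\E$ with coherent cokernel $\Q$ supported in dimension at most one, dimension-shifts to identify $\Tor_1(\F^{\sigma^n}(U),\N(U))$ with $\Tor_2(\Q^{\sigma^n}(U),\N(U))$, and then invokes Lemma~\ref{lem: curve} to confine the nonvanishing of $\Tor_2$ to the closed associated points of $\N$ lying on $\supp\Q^{\sigma^n}$. You dispense with the dimension shift and establish the needed local vanishing of $\Tor_1$ directly: over a regular local ring of dimension at most two, a torsion-free module has projective dimension at most one, and your multiplication-by-$t$/Nakayama argument on the finite-length module $\Tor_1(P,Q)$ shows $\Tor_1(P,Q)=0$ whenever $\operatorname{depth} Q\ge 1$. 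One advantage of your formulation is that it isolates the two finite sets ($Z$ the non-free locus of $\F$, $Y$ the closed associated points of $\N$) explicitly before invoking non-periodicity; the paper's final step (``$\sigma^{-n}(q_i)$ must be an element of $\supp\Q$'') is phrased in terms of $\supp\Q$, which is a priori one-dimensional, and needs to be implicitly sharpened to the finitely many closed associated points of $\Q$ for the finiteness of $n$ to follow. Your local lemma plays exactly the role that Lemma~\ref{lem: curve} plus the dimension shift play in the paper, and for surfaces it is arguably cleaner and more self-contained.
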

\begin{proof}
We do the case where ${\rm Tor}_j^U(\F^{\sigma^n}(U),\N(U))=0$, the other case following mutatis mutandis.

We have a short exact sequence
$$0\to \F\to \E\to \Q\to 0$$ with $\E$ coherent and locally free and $\Q$ coherent having support of dimension at most one.
Then since $\E$ is locally free, we have an isomorphism
 ${\rm Tor}_j^U(\F^{\sigma^n}(U),\N(U))\cong {\rm Tor}_{j+1}^U((\Q^{\sigma^n})(U),\N(U))$. Since $X$ is a smooth surface, we see that the only case that is not immediate is when $j=1$ and so it is sufficient to show that the set of $n$ such that
${\rm Tor}_{2}^U((\Q^{\sigma^n})(U),\N(U))$ is nonzero is finite. 
Let $q_1,\ldots ,q_r$ be the points in $U$ that are elements of $\supp\N$; i.e., the associated points of $\N$ in $U$. 
By Lemma \ref{lem: curve} we see that if 
${\rm Tor}_{2}^U((\Q^{\sigma^n})(U),\N(U))$ is nonzero, then there must be some  $q_i$ that is an element of the support of  $\Q^{\sigma^n}$ and such that
${\rm Tor}_{2}^U((\Q^{\sigma^n})_{q_i},\N_{q_i})\neq 0$.  Thus  $\sigma^{-n}(q_i)$ must be an element of $\supp\Q$.  But by assumption, $q_1,\ldots ,q_r$ are not $\sigma$-periodic points and so the collection of $n$ for which $\sigma^{-n}(q_i)$ is an element of $\supp\Q$ is a finite set.  

\end{proof}

\section{Theorem \ref{thm: main1} when $\sigma$ lies in an algebraic group}
\label{sec:Sue}

In this section we prove the remaining case of Theorem~\ref{thm: main1}.  

\begin{theorem}\label{thm1.3.2}
Let $k$ be an algebraically closed field of characteristic zero, let $X$ be a nonsingular quasiprojective variety over $k$, and let $G$ be an algebraic group contained in $ \aut_k(X)$.   
Let $\sigma \in G$ and let $\M$, $\N$ be coherent sheaves on $X$.
For all $i \geq 1$ the sets:
\[ %T(i) := 
\{ n \in \ZZ \ | \ \shTor_i^X((\sigma^n)^* \M, \N) = 0\}\]
and 
\[ %NT(i) :=  
\{ n \in \ZZ \ | \ \shTor_i^X((\sigma^n)^* \M, \N) \neq 0\}\]
are finite unions of infinite arithmetic progressions up to the addition and removal of finite sets.
\end{theorem}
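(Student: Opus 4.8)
The plan is to reduce the statement to the dynamical Mordell--Lang theorem for étale self-maps \cite{BGT}, applied to the translation action of $\sigma$ on the algebraic group $G$, once one knows that the ``bad locus''
\[
Z_i := \{\, g \in G \mid \shTor_i^X(g^*\M, \N) \neq 0 \,\}
\]
is a constructible subset of $G$. Granting this, the two sets in the statement are complementary, so it suffices to analyze $\{\, n \in \ZZ \mid \sigma^n \in Z_i \,\}$.

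\emph{Step 1: constructibility of $Z_i$.} The structural point is that the universal translate of $\M$ is flat over $G$. Let $a\colon G\times X\to X$ be the action morphism, $p\colon G\times X\to X$ the second projection, and $p_1\colon G\times X\to G$ the first projection. The ``twist'' automorphism $(g,x)\mapsto(g,gx)$ of $G\times X$ is an isomorphism over $G$ that identifies $a^*\M$ with $p^*\M\cong\mathcal{O}_G\boxtimes\M$, which is visibly flat over $G$; moreover $a^*\M|_{\{g\}\times X}\cong g^*\M$ for every closed point $g$, and $a^*\M$ is Tor-independent from $\mathcal{O}_{\{g\}\times X}$ by that flatness. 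Taking a finite locally free resolution of $a^*\M$ on the smooth quasiprojective variety $G\times X$ and tensoring with $p^*\N$ — which is again $G$-flat, so every term of the resulting complex is $G$-flat — produces a base-change spectral sequence
\[
\shTor_p^{\mathcal{O}_G}\!\bigl(\shTor_q^{G\times X}(a^*\M,p^*\N),\ \kappa(g)\bigr)\ \Longrightarrow\ \shTor_{p+q}^X(g^*\M,\N).
\]
Only finitely many of the coherent sheaves $\shTor_q^{G\times X}(a^*\M,p^*\N)$ are nonzero (as $G\times X$ is regular of finite dimension), and by generic flatness on each irreducible component of $G$ they are all $G$-flat over a common dense open $U\subseteq G$. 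Over $U$ the spectral sequence degenerates, so $\shTor_i^X(g^*\M,\N)\cong\shTor_i^{G\times X}(a^*\M,p^*\N)|_{\{g\}\times X}$ and hence $Z_i\cap U = U\cap p_1\bigl(\supp\shTor_i^{G\times X}(a^*\M,p^*\N)\bigr)$ is constructible by Chevalley's theorem. Flatness of $a^*\M$ over $G$ is preserved on base change to $(G\setminus U)_{\mathrm{red}}\times X$, and every closed subvariety of $G$ is quasiprojective (so coherent sheaves have locally free resolutions there), so the same argument applies to $(G\setminus U)_{\mathrm{red}}$; Noetherian induction on $G$ then shows $Z_i$ is constructible. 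This is exactly the kind of homological Kleiman--Bertini variational statement established in \cite{S-KB}, which one could instead cite directly.

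\emph{Step 2: dynamical Mordell--Lang.} Let $H\subseteq G$ be the Zariski closure of the cyclic subgroup generated by $\sigma$; it is a commutative algebraic subgroup and, like every algebraic group over a field, is quasiprojective. Translation by $\sigma$ is an automorphism $t_\sigma\colon H\to H$ with $t_\sigma^n(e)=\sigma^n$ for all $n\in\ZZ$. The set $Z_i\cap H$ is constructible in $H$, hence a finite disjoint union of locally closed subvarieties $C_j=\overline{C_j}\setminus\partial C_j$, with $\overline{C_j}$ and $\partial C_j$ closed in $H$. Applying \cite{BGT} to the automorphism $t_\sigma$ of $H$ with starting point $e$ shows that for any closed subvariety $W\subseteq H$ the set $\{\,n\in\ZZ\mid\sigma^n\in W\,\}$ is a finite union of doubly-infinite arithmetic progressions up to addition and removal of a finite set (the $p$-adic interpolation underlying \cite{BGT} is valid for all $n\in\ZZ$ because $t_\sigma$ is invertible). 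Since such sets form a Boolean subalgebra of the power set of $\ZZ$ — an intersection of two arithmetic progressions being, by the Chinese Remainder Theorem, an arithmetic progression or empty — the set $\{\,n\mid\sigma^n\in Z_i\,\}=\bigcup_j\bigl(\{n:\sigma^n\in\overline{C_j}\}\setminus\{n:\sigma^n\in\partial C_j\}\bigr)$ has the same form, and so does its complement. This is the assertion of the theorem.

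\emph{Main obstacle.} The substance is Step 1: controlling how $\shTor_i^X(g^*\M,\N)$ varies with $g$. The feature that makes it work is the flatness of the universal translate $a^*\M$ over $G$; the extra care is forced by $X$ not being proper, so one cannot short-cut through cohomology-and-base-change for a morphism to a point and must argue fibrewise over $G$ with generic flatness and Noetherian induction — precisely the homological Kleiman--Bertini circle of ideas of \cite{S-KB}. Once $Z_i$ is known to be constructible, Step 2 is an essentially formal application of \cite{BGT} together with closure properties of arithmetic progressions.
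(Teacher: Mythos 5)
Your proof is correct, but it takes a genuinely different route from the paper, and in fact proves more than it needs.

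The paper's argument (Proposition~\ref{prop:genericbehaviour}) never establishes constructibility of $Z_i=\{g\in G : \shTor_i^X(g^*\M,\N)\neq 0\}$. It only applies generic flatness \emph{once}, on a connected component $H$ of the group, to produce a single dense open $V\subset H$ on which $\shTor_i^X(g^*\M,\N)$ is flat (hence has constant vanishing behaviour because $V$ is connected). The theorem then follows by an observation you do not use: after replacing $G$ by $\overline{\langle\sigma\rangle}$ and $\sigma$ by an iterate, the orbit $\{\sigma^n : n\in\ZZ\}$ is Zariski dense in $G^\circ$, so the DML theorem of \cite{BGT} applied to the translation map and the proper closed set $H\setminus V$ yields a finite union of arithmetic progressions along which $\sigma^n\in H\setminus V$ — and density forces those progressions to be empty, since the closure of any infinite arithmetic progression of iterates would be a $\sigma^a$-invariant proper closed subset. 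Thus the exceptional $n$ form a finite set, and the conclusion is the sharper dichotomy "cofinite or finite" on each coset, not just "finite union of APs up to finite sets."

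Your Step~1 instead pushes to full constructibility of $Z_i$ by Noetherian induction over the strata of $G$, and your Step~2 correspondingly decomposes $Z_i\cap H$ into locally closed pieces, applies DML to each closed piece, and invokes the Boolean-algebra structure of eventually-periodic subsets of $\ZZ$. This is more work: the stratified base-change argument, while correct (the restriction of the finite locally free resolution of $a^*\M$ to each $(G\setminus U)_{\mathrm{red}}\times X$ remains a locally free resolution because all terms and $a^*\M$ are $G$-flat, so smoothness of the stratum is not needed), goes well beyond what the theorem requires. What the extra generality buys is robustness: your constructibility statement about $Z_i$ would still be useful if the orbit of $\sigma$ were not dense in its closure's identity component — but here it always is, so the paper can shortcut. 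In exchange, your Boolean-algebra step only recovers the stated form of the answer, whereas the paper's density trick yields the cleaner dichotomy. Both are valid proofs of the theorem as stated.
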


Theorem~\ref{thm1.3.2} follows from the following result.
\begin{proposition}\label{prop:genericbehaviour}
Let  $k$ be an algebraically closed field of characteristic zero, let $X$ be a nonsingular quasiprojective variety over $k$, and let $H$ be a connected component of  an algebraic group $G$ contained in $ \aut_k(X)$. 
Let $\M, \N$ be coherent sheaves on $X$.  
For all $i\geq1$, there is an open subset $V$ of $H$ so that either
\[ \shTor_i^X(g^\star \M, \N) = 0 \mbox{ for all } g \in V,\]
or
 \[ \shTor_i^X(g^\star \M, \N) \neq 0 \mbox{ for all } g \in V.\]
 \end{proposition}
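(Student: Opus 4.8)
The plan is to reduce the statement to a standard semicontinuity argument over the parameter space $H$, after spreading out the data into a universal family over $X \times H$. First I would form the natural action morphism $a \colon X \times H \to X$, which on points sends $(x, g)$ to $g(x)$; more precisely, since $H$ is a connected component of an algebraic group $G \subseteq \aut_k(X)$, the action gives a morphism $a\colon X\times H \to X$ (after translating $H$ so it contains the identity, or simply working with the restriction of the $G$-action), together with the second projection $\pi\colon X\times H \to H$ which is flat. Pulling $\M$ back along $a$ and $\N$ back along the first projection $q\colon X\times H \to X$, one gets coherent sheaves $\widetilde{\M} := a^\star\M$ and $\widetilde{\N} := q^\star\N$ on $X\times H$, with the key property that for each closed point $g \in H$ the restriction to the fiber $X \times \{g\} \cong X$ recovers $g^\star\M$ and $\N$ respectively.

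Next I would consider the relative Tor sheaves $\mathcal{R}_i := \shTor_i^{X\times H}(\widetilde{\M}, \widetilde{\N})$, which are coherent on $X\times H$, and push them forward: since the only subtlety is that $X$ need not be proper, I would instead work fiberwise. The heart of the argument is the base-change statement: for $g$ in a dense open subset of $H$, the formation of $\shTor_i^X(g^\star\M,\N)$ commutes with passing to the fiber of $\mathcal{R}_i$ at $g$. This is where I expect to invoke a generic-flatness/generic-base-change argument: choose a (locally, on an affine cover of $X$) finite locally free resolution $P_\bullet \to \widetilde{\N}$ over $X\times H$ — available because $X$ is nonsingular hence $X\times H$ has finite global dimension locally — tensor with $\widetilde{\M}$, and observe that over a dense open $V\subseteq H$ the cohomology sheaves of $P_\bullet \otimes \widetilde{\M}$ are flat over $H$, so by the standard cohomology-and-base-change / generic freeness results their formation commutes with restriction to fibers. (This is precisely the analogue over an algebraic base of Corollary~\ref{cor:exact->exact}.) Shrinking $V$ finitely many times handles all $i$ at once since $\shTor_i$ vanishes for $i$ large.

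Having arranged this, the conclusion is immediate: over the dense open $V$, $\shTor_i^X(g^\star\M,\N)$ is the fiber at $g$ of the coherent sheaf $\mathcal{R}_i$ restricted to $X \times V$. If $\mathcal{R}_i|_{X\times V} = 0$ then $\shTor_i^X(g^\star\M,\N) = 0$ for all $g\in V$; otherwise $\mathcal{R}_i|_{X\times V}$ has nonempty support, and by further shrinking $V$ to the image of this support under the (finite-type, hence constructible-image) projection $\pi$ — intersected with the open set where base change holds — we may assume the support surjects onto $V$, so that every fiber is nonzero, giving $\shTor_i^X(g^\star\M,\N) \neq 0$ for all $g\in V$. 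The main obstacle is the base-change step: one must be careful that $X$ is only quasiprojective, so global finite free resolutions of $\widetilde{\N}$ need not exist, and one should either work on an affine open cover of $X$ (patching the finitely many resulting open subsets of $H$) or appeal to the existence of a finite resolution by sheaves that are flat over $H$ and acyclic for the relevant tensor/base-change operations. Once that is in place, everything else is formal noetherian induction on $\dim H$ to deduce Theorem~\ref{thm1.3.2} from the proposition.
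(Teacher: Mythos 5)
Your proof takes essentially the same approach as the paper's, so let me just flag a few small points of comparison. Both arguments spread $\M$ and $\N$ into a family over $H$ (you over $X\times H$, the paper over $G\times X$; which sheaf is pulled back by the action and which by the projection is immaterial, since the paper also replaces $g$ by $g^{-1}$ at the end), both invoke generic flatness over a dense open $V\subseteq H$ to make the Tor sheaves commute with passage to fibers, and both then derive the dichotomy on $V$. Your worry about global finite locally free resolutions is not actually an obstacle: $X$ smooth quasiprojective has the resolution property and finite global dimension, so a finite locally free resolution $\L_\bullet\to\M$ already exists on $X$, and one simply pulls it back along the (flat) projection, which is what the paper does. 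Your closing step (shrink $V$ using that the image of $\supp\mathcal{R}_i$ under the projection to $H$ is constructible) is phrased slightly differently from the paper's, which concludes from the flatness of the Tor sheaves over the connected $V$; in both cases what is really being used is that a constructible subset of the irreducible $V$ either contains a dense open or misses one, so a further shrink of $V$ yields the dichotomy. Note that flatness over a connected base alone does not force all fibers to be simultaneously zero or simultaneously nonzero when $X$ is not proper --- consider $\O_{\{gx=1\}}$ on $\AA^1_g\times\AA^1_x$, which is flat over $\AA^1_g$ but has empty fiber over $g=0$ --- so the constructibility observation you make is what genuinely does the work here. Finally, your parting remark that deducing Theorem~\ref{thm1.3.2} is ``formal noetherian induction on $\dim H$'' is not how the paper proceeds: the deduction instead uses the Zariski density of the forward orbit of $\sigma$ in $G$ together with the dynamical Mordell--Lang theorem for \'etale maps applied to the translation-by-$\sigma$ map on $G$.
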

 
 \begin{proof}
 Let
 \[ \xymatrix{ G \times X \ar[r]^{p} \ar[d]_{q} & X \\ G & }\]
 be the two projection maps, and let $\mu:  G \times X \to X$ be the map defining the action of $G$ on $X$.
 Consider coherent sheaves  $\F$ on $G\times X$ and $\E$ on $X$.  When we write $\F \otimes_X \E$, we implicitly assume that $X$ acts on $\F$ via $\mu$.  
 We note that $\mu$ is flat and so  we have
 \[ \shTor_i^{G \times X}(\F, \mu^* \E) \cong \shTor_i^X(\F, \E).\]

 Let $\L_{\bullet} \to \M$ be a locally free resolution of $\M$ (which is finite by assumption).
 Consider the complex
 \[ \C_\bullet = p^* \L_\bullet \otimes_X \N\]
 of sheaves on $G \times X$.
 The sheaves $p^* \L_\bullet$ are an $X$-flat resolution of $p^*\M$, and so the 
  $i$th homology  of $\C_\bullet$ is $\shTor_i^X(p^* \M, \N)$. 
 These homology groups are computed via exact sequences
 \[ 0 \to \Z_{i+1} \to \C_{i+1}\to \B_i \to 0\]
 and
 \[ 0 \to \B_i \to \Z_i \to   \shTor_i^X(p^* \M, \N) \to 0\]
 for $0 \leq i \leq \dim X $.
 By generic flatness, there is a dense open subset $V \subset H$ such that each of the finitely many  $\B_i $ and $\shTor_i^X(p^* \M, \N)$ are flat over $V$.
 Therefore, if $g \in V$, the  sequences
 \[ 0 \to \Z_{i+1}\otimes_G k_g \to(p^*\L_{i+1} \otimes_X \N) \otimes_G k_g\to \B_i\otimes_G k_g \to 0\]
 and
 \[ 0 \to \B_i \otimes_G k_g\to \Z_i\otimes_G k_g \to   \shTor_i^X(p^* \M, \N)\otimes_G k_g \to 0\]
 are still exact, and so 
\[ H_i(\C_\bullet \otimes_G k_g) =    \shTor_i^X(p^* \M, \N)\otimes_G k_g 
 \]
 for all $i \in \ZZ$ and $g \in V$.
 
Note that $\mu$ induces the  multiplication-by-$g$ isomorphism from $\{g\} \times X \to X$.  
The complex  $p^* \L_\bullet \otimes_G k_g$ is a locally free resolution of $\M$ on $\{g\} \times X$.
 Thus $\mu$ maps $\C_\bullet\otimes_G k_g$  to the complex $g_* \L_\bullet \otimes_X \N$,
 and these complexes are isomorphic considered as sheaves on $X$.  
The final complex computes $\shTor_i^X(g_* \M, \N)$.
It follows that
\[ 
\mu_*    (  \shTor_i^X(p^* \M, \N)\otimes_G k_g) \cong \shTor_i^X(g_* \M, \N)
\]
for all $i \in \ZZ$ and $g \in V$.

The sheaves $\shTor_i^X(p^* \M, \N)$ are flat over $V$ by assumption.
Since $V$ is an open subset of a connected variety, it is connected.
Thus for each $i$ it is the case that either the sheaf  $\shTor_i^X(p^* \M, \N)\otimes_G k_g$ is  0 for all $g \in V$ or that it is never $0$ for $g\in V$.
Replacing $g$ by $g^{-1}$, the result follows.
\end{proof}

\begin{proof}[Proofs of Theorem~\ref{thm1.3.2}]
By replacing $G$ by the Zariski closure of the subgroup generated by $\sigma$ we may assume that the forwards and backwards iterates of $\sigma$ are Zariski dense in $G$. By replacing $\sigma$ by an iterate, we may assume that $\sigma$ lies in the connected component of the identity of $G$ and that its iterates are dense in the connected component of the identity of $G$.  Since the result holds for $\sigma$ if it holds for an iterate of $\sigma$, we may then assume that $G$ is connected. Let $i\ge 1$. By Proposition \ref{prop:genericbehaviour}, there is a nonempty open subset $V$ of $G$ such that either
$\shTor_i^X(g^*\M,\N)$ is zero for every $g\in V$ or it is nonzero for every $g\in V$.  Now consider the map $f:G\to G$ given by $f(g)=\sigma\circ g$.  Then $f$ is an automorphism and $V$ is open, so the collection of integers $n$ for which $f^n(1)\in X\setminus V$ is a finite union of arithmetic progressions along with a finite set (cf. \cite{BGT}).  Moreover, since $S:=\{f^n(1)\colon n\in \mathbb{Z}\}$ is Zariski dense in $G$, we see that there cannot possibly be an infinite arithmetic progression upon which $f^n(1)\in X\setminus V$, since the Zariski closure of elements in this progression would be a proper closed subset and would be invariant under an iterate of $\sigma$, contradicting the fact that $S$ is dense in $G$. Thus there are only finitely many $n$ for which $f^n(1)\in X\setminus V$.  This means that $\shTor_i^X(\M^{\sigma^n},\N)$ is either zero for all but finitely many $n$ or it is nonzero for all but finitely many $n$.  Noting that we replaced $\sigma$ by an iterate, we then obtain the desired result.
 \end{proof}

\begin{appendices}
\section{Appendix: Module-Theoretic Analogue of Strassman's Theorem}
\label{sec:strassman}
Strassman's theorem \cite{strassman} 
%see also \cite[Theorem 4.1, p. 62]{cassels}
is a cornerstone finiteness result in $p$-adic analysis, controlling the zeros of a convergent power series. Specifically, it states that if $K$ is a non-Archimedean field with valuation ring $\mathfrak{o}$, then every nonzero element $f(z)$ of the ring of convergent power series $K\langle z\rangle$ has only finitely many zeros in $\mathfrak{o}$. The goal of this appendix is to prove a module-theoretic analogue of Strassman's theorem.  Throughout this section, we take $K$ to be a non-Archimedean field containing $\QQ_p$, take $\mathfrak{o}$ to be its valuation ring, and $R$ to be a subring of $\mathfrak{o}$.

\begin{theorem}
Let $\mathcal{M}$ be a finitely generated $K\langle x_1,\ldots ,x_d,z\rangle$-module. Then the set of $c\in R$ for which $\mathcal{M}|_{z=c}=(0)$ is open in $R$. If $R$ is compact, then there exists $\varepsilon>0$ such that up to the addition and removal of finite sets, the set of $c\in R$ for which $\mathcal{M}|_{z=c}=(0)$ is a union of balls in $R$ of radius $\varepsilon$.

In particular, if $R=\mathbb{Z}$ then up to addition and removal of finite sets, the set of $c\in R$ for which $\mathcal{M}|_{z=c}=(0)$ is a finite union of arithmetic progressions with difference $p^r$ for some $r\ge 0$.
\label{theorem: strassman}
\end{theorem}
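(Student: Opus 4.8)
The plan is to recast $\mathcal{M}|_{z=c}=(0)$ as emptiness of a fibre of an analytic set, obtain openness from completeness, and prove the structural statement over compact $R$ by an induction on the number of variables that feeds into the classical Strassman theorem. I would begin with a presentation $K\langle\mathbf{x},z\rangle^{m}\xrightarrow{\Phi(z)}K\langle\mathbf{x},z\rangle^{n}\to\mathcal{M}\to0$, where $\mathbf{x}=(x_{1},\dots,x_{d})$, and set $A=K\langle\mathbf{x}\rangle$. Since specialisation at $z=c$ is right exact, $\mathcal{M}|_{z=c}=\cok\Phi(c)$; as $A^{n}$ is free this vanishes iff $\Phi(c)$ is surjective, iff its maximal minors generate the unit ideal of $A$, iff $\mathrm{Fitt}_{0}^{A}(\mathcal{M}|_{z=c})=A$. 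Because $\mathrm{Fitt}_{0}$ commutes with base change, this ideal is $J|_{z=c}$ for the fixed ideal $J:=\mathrm{Fitt}_{0}^{K\langle\mathbf{x},z\rangle}(\mathcal{M})=(g_{1},\dots,g_{N})$. So, with $Z:=V(g_{1},\dots,g_{N})$ in the closed polydisc $\mathrm{Max}(K\langle\mathbf{x},z\rangle)$ and $\pi\colon Z\to D:=\mathrm{Max}(K\langle z\rangle)$ the projection onto the $z$-disc, the affinoid Nullstellensatz gives $\{c\in R:\mathcal{M}|_{z=c}=(0)\}=R\setminus\pi(Z)$, so everything reduces to understanding the ``bad set'' $\pi(Z)\cap R$.

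For openness: if $\mathcal{M}|_{z=c_{0}}=(0)$ then the surjection $\Phi(c_{0})\colon A^{m}\to A^{n}$ of free modules splits, say $\Phi(c_{0})\Psi=\id$. Re-expanding the entries of $\Phi$ as power series in $z-c_{0}$ with coefficients in $A$ (legitimate, since binomial coefficients are $p$-adic integers, so the coefficients' Gauss norms still tend to $0$) shows $\|\Phi(c)-\Phi(c_{0})\|\to0$ as $c\to c_{0}$. Hence for $c$ in a small ball about $c_{0}$ one has $\Phi(c)\Psi=\id+E$ with $\|E\|<1$, which is invertible because $A$ is complete, so $\Phi(c)$ is surjective and $\mathcal{M}|_{z=c}=(0)$. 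Thus the good set is open and $\pi(Z)\cap R$ is closed in $R$.

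Now assume $R$ compact. If $\mathcal{M}$ is killed by some nonzero $h\in K\langle z\rangle$, then $\mathcal{M}|_{z=c}=(0)$ whenever $h(c)\ne0$, so $\pi(Z)\cap R\subseteq V(h)\cap R$ is finite by classical Strassman; hence we may assume $Z\to D$ is dominant. I would then induct on $d$. For $d=0$, $\mathcal{M}$ is finitely generated over the PID $K\langle z\rangle$, and the bad set is, up to a finite set, $\varnothing$ or $R$. For the inductive step, pick $0\ne g\in\ann(\mathcal{M})$ (if $\ann(\mathcal{M})=0$ the bad set is $R$). Using that the compact ring $R$ meets only finitely many residue discs of $D$, split $R$ into that finite collection and rescale each to a unit disc; after a Weierstrass coordinate change in $x_{1},\dots,x_{d}$ only, the top $x_{d}$-coefficient of $g$ reduces to a polynomial $q(z)$ over the residue field, so on the pieces whose residue value is not a zero of $q$ this coefficient becomes a unit, Weierstrass division makes $\mathcal{M}$ a finitely generated $K\langle x_{1},\dots,x_{d-1},z\rangle$-module there, and the inductive hypothesis applies; on the finitely many pieces whose residue value is a zero of $q$ one repeats the analysis (a secondary induction). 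Collecting the finitely many balls so produced and refining them to a common radius $\varepsilon$ gives the compact-$R$ statement, and the case $R=\ZZ$ follows by first running the argument with the compact ring $\ZZ_{p}\subseteq\mathfrak{o}$ (valid since $\QQ_{p}\subseteq K$) and intersecting each ball $a+p^{r}\ZZ_{p}$ with $\ZZ$, turning it into an arithmetic progression of difference $p^{r}$.

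The main obstacle is the termination of this induction: one must choose the coordinate changes and a well-founded invariant — combining $d$ with, for instance, the Weierstrass degree of $g$ in $x_{d}$ — that strictly decreases when one passes to a residue disc on which $q$ vanishes, so that only finitely many rescalings occur overall. Compactness keeps the number of balls finite at each stage, but the delicate point is that the nested process actually stops, so that $\pi(Z)\cap R$ is a finite union of equal-radius balls up to a finite set rather than an accumulating closed set such as $\{0\}\cup\{p^{n}:n\ge1\}$. Everything else is either formal or an invocation of the classical Strassman theorem.
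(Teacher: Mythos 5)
Your reduction to Fitting ideals and the openness argument are both correct, but they take you to roughly the same starting point as the paper by a slightly different (and equally valid) route. After that the two proofs diverge sharply, and the divergence is where your argument has a genuine gap.

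The paper does not induct on $d$ at all. After replacing $\M$ by $S\langle z\rangle/\ann(\M)$ (an elementary determinant-trick lemma shows this does not change the set of $c$ with $\M|_{z=c}=0$), it passes to the radical, decomposes into prime components $Q_1,\dots,Q_r$, and reduces the whole problem to a single prime quotient $\M=S\langle z\rangle/Q$. At that point it applies Noether normalization to realize $\M$ as a finite module over a polydisc algebra $T=K\langle t_1,\dots,t_r\rangle$, localizes so that $\M_q$ becomes free over $T_q$, and studies the minimal polynomial $f(t)\in T_q[t]$ of the element $z\in\M$ acting by multiplication. The entire compact-$R$ statement then collapses to one explicit estimate: either $f$ has a root $c\in R$ (in which case $f(t)=u(t-c)$ and the bad set is $\{c\}$), or $0\notin f(R)$, in which case compactness gives a uniform $\varepsilon$ with $\|f(c)\|>\varepsilon$ and a straightforward Taylor/Gauss-norm estimate shows $\|f(c')-f(c)\|<\varepsilon/2$ whenever $|c-c'|<\varepsilon/(2Np^d)$, so unitality of $f(c)$ propagates to a full ball of uniform radius. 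There is no nested induction, so there is no termination issue to worry about.

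Your proposal instead attempts a Weierstrass-preparation induction on $d$, restricting to residue discs of $z$, rescaling, and dividing out by an annihilating element $g$ whenever its top $x_d$-coefficient reduces to a nonzero polynomial $q(z)$. As you yourself flag, the issue is that on residue discs where $q(\bar z)=0$, after recentering and rescaling the $z$-coordinate the leading coefficient of $g$ need not become distinguished, and neither $d$ nor the Weierstrass degree of $g$ in $x_d$ decreases in any obvious way. You would need to exhibit a well-founded invariant that strictly drops at every recursive call, and that is precisely the hard content of the theorem; without it the argument as written could a priori produce an infinite nesting of residue discs and fail to yield a single uniform $\varepsilon$. So the structure is sound up to the point of the compact-$R$ induction, but the key step is missing rather than merely deferred. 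The paper's Noether-normalization route is one way to avoid the problem entirely; if you want to salvage your induction, you would need to prove, for instance, that after finitely many residue-disc restrictions the element $g$ (or some element of $\ann(\M)$) becomes a unit or becomes $x_d$-distinguished over the whole remaining disc — and that is not established by what you wrote.
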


\begin{remark}
Using Theorem \ref{theorem: strassman}, we recover a weak version of Strassman's theorem, namely the case where $K/\QQ_p$ is a finite extension. Let $d=0$, $\M=K\langle z\rangle/(f(z))$, and $R=\mathfrak{o}$. Since $K/\QQ_p$ is finite, $R$ is compact and so Theorem \ref{theorem: strassman} tells us that there exists $\varepsilon>0$ such that up to addition and removal of finite sets, the set of $c\in R$ with $\M|_{z=c}=(0)$ is a union of $\varepsilon$-balls. Since there are only finitely many balls of radius $\varepsilon$, this set is a finite union of $\varepsilon$-balls, and hence closed.

Now notice that $\M|_{z=c}=(0)$ if and only if $f(c)\neq0$. As a result, up to addition and removal of finite sets, the set of $c\in R$ where $f$ vanishes is open. Since $f(z)$ has infinitely many zeros in $R$ by assumption, it must vanish on a ball, say of radius $\varepsilon'$. Recentering the ball, we can assume that $f(z)$ vanishes for all $|z|<\varepsilon'$. If $f(z)$ is non-zero, then it has the form $f(z)=a_nz^n+\sum_{i>n}a_iz^i$ with $a_n\neq0$. However, for $|z|$ sufficiently small, $a_nz^n$ has larger norm than $\sum_{i>n}a_iz^i$, and so $f$ does not vanish at $z$. We conclude that $f(z)=0$.
\end{remark}

Before proving Theorem \ref{theorem: strassman}, let us give some motivation as to why one would expect this statement to be true. We begin with a simple proof of Strassman's theorem using the commutative algebra of affinoids.  For each $\lambda\in \mathfrak{o}$, the ideal $(z-\lambda)$ is a maximal ideal of $K\langle z\rangle$.  If $f(\lambda)=0$ then $f(z)\in (z-\lambda)$. Suppose $f(\lambda)=0$ for all $\lambda\in \mathcal{S}$ where $\mathcal{S}$ is an infinite subset of $\mathfrak{o}$. Then $f(z)\in I:=\bigcap_{\lambda\in \mathcal{S}} (z-\lambda)$.  Note that $I$ is a radical ideal and it has finitely many minimal prime ideals  $P_1,\ldots ,P_r$ above it. Then since each $(z-\lambda)$ is above $I$, each $(z-\lambda)$ contains some $P_i$.  In particular, there is some $j$ and an infinite set of $\lambda$ such that $(z-\lambda)\supseteq P_j$.  But by Krull's principal ideal theorem we have that $(z-\lambda)$ is a height one prime ideal and since they are pairwise comaximal we then see that $P_j=(0)$ and so $I=(0)$.  

Taking this point of view, it is natural to seek an extension expressed in terms of ideal membership.  More precisely, one has an affinoid algebra $S:=K\langle x_1,\ldots ,x_d,z\rangle$ and an ideal $I(z)$.  One would like to conclude that if $f(z)\in S$ has the property that $f(z)\in I(z)+(z-c)S$ for infinitely many $c\in \mathfrak{o}$ then $f$ is in $I(z)$.  Notice that if one had such a statement, then taking $d=0$ and $I(z)$ to be the zero ideal gives Strassman's theorem. Unfortunately, this is false as stated. For example, the ideal $I(z)=(z)$ has the property that $1\in I(z)+(z-c)S$ for all nonzero $c\in \mathfrak{o}$. Another example, when $K$ is an extension of $\mathbb{Q}_p$, is given by the ideal $I(z):=(xz-1)$ in $K\langle x,z\rangle$, which has the property that $1\in I(z)+(z-c)R$ for all $c\in p\mathbb{Z}_p$. Notice that in the latter example, this set of $c$ is an open ball; in the former example it is an open ball minus a point.
%This example, however, is problematic only because the ideal $I(z)$ is not saturated with respect to $\mathfrak{o}\langle z\rangle \setminus \{0\}$.  But the naive extension of Strassman's theorem may fail even for examples where the ideal is saturated with respect to $K\langle z\rangle \setminus \{0\}$; for example, if $K$ is an extension of $\mathbb{Q}_p$ then the ideal $I(z):=(xz-1)$ in $K\langle x,z\rangle$ has the property that $1\in I(z)+(z-c)R$ for all $c\in p\mathbb{Z}_p$.  Here the problem is that $z-c$ is a unit mod $I(z)$ for all $c\in p\mathbb{Z}_p$.  As it turns out, these are the only problems one needs to address.  After taking these issues into account, one can formulate a general version of Strassman's theorem for modules.

%The openness property in Theorem \ref{theorem: strassman} is never actually used by us, but we record it because we feel it gives a sense of the general structure of how elements of the module $\M(z)$ reduce upon specialization. We postpone
%We give the proof of Theorem \ref{theorem: strassman} after establishing a needed result from commutative algebra.  Recall that a module $M$ of a ring $R$ is saturated with respect to a set $S$ if for all $s\in S$ and $m\in M$, whenever $sm=0$, we have $m=0$.

We begin the proof of Theorem \ref{theorem: strassman} by showing that the desired set of $c$ is open in $R$.
%showing the set of $c$ where $\M|_{z=c}=(0)$ is, up to addition and removal of a finite set, open and closed. Since \matt{In the case where} $R$ is a compact set, every set $\mathcal{S}$ which is open and closed is a finite union of $\varepsilon$-balls for some $\varepsilon>0$. To see this, notice that $\mathcal{S}$ is a finite union of balls since it is compact, and let $\varepsilon>0$ be the minimal radius in this finite list of balls. Then notice that every ball $B$ of radius $r>\varepsilon$ can be covered by balls of radius $\varepsilon$, and since $B$ is compact, it can be covered by finitely many such $\varepsilon$-balls.
%
%We show that the set of $c$ where $\M|_{z=c}=(0)$ is an open set in Proposition \ref{prop:Mc=0-is-open}. To show it is a closed set, we reduce to the cyclic case in Lemma \ref{l:Mc=0-is-closed-->cyclic} and then prove the result for cyclic modules in Proposition \ref{prop:Mc=0-is-closed}.
\begin{lemma}
\label{l:Mc=0-is-open}
If $\mathcal{M}$ is a finitely generated $K\langle x_1,\ldots ,x_d,z\rangle$-module, then the set of $c\in R$ for which $\mathcal{M}|_{z=c}=(0)$ is open in $R$.
\end{lemma}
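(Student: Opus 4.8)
The plan is to fix $c_0\in R$ with $\mathcal{M}|_{z=c_0}=(0)$ and produce a neighbourhood of $c_0$ in $R$ on which the same holds. Write $T=K\langle x_1,\ldots,x_d,z\rangle$ and $A=K\langle x_1,\ldots,x_d\rangle$, so that $\mathcal{M}|_{z=c}=\mathcal{M}/(z-c)\mathcal{M}$. Since $\mathcal{M}$ is finitely generated over the Noetherian ring $T$, fix generators $m_1,\ldots,m_t$ of $\mathcal{M}$. The hypothesis $\mathcal{M}/(z-c_0)\mathcal{M}=(0)$ says exactly that each $m_j\in(z-c_0)\mathcal{M}$; equivalently, after translating $z\mapsto z+c_0$ we may assume $c_0=0$ and $\mathcal{M}=z\mathcal{M}$. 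The key point I would exploit is a Nakayama-type argument: $\mathcal{M}=z\mathcal{M}$ as $T$-modules forces, by a determinant trick, the existence of an element $u(z)\in T$ with $u(z)\equiv 1 \pmod z$ and $u(z)\mathcal{M}=0$; concretely, writing each $m_j=\sum_k z a_{jk} m_k$ with $a_{jk}\in T$ and applying the adjugate of $I-zA$ where $A=(a_{jk})$, one gets $\det(I-zA)\cdot\mathcal{M}=0$, and $\det(I-zA)\in 1+zT$.

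Having such a $u(z)\in 1+zT$ annihilating $\mathcal{M}$, the lemma follows quickly: for any $c\in R$ we have $\mathcal{M}|_{z=c}=\mathcal{M}/(z-c)\mathcal{M}$, and $u(c)$ annihilates this module. So it suffices to find a neighbourhood of $0$ in $R$ on which $u(c)$ is a \emph{unit} in $A$ — then $\mathcal{M}|_{z=c}=(0)$ there. Write $u(z)=1+\sum_{n\geq 1}b_n z^n$ with $b_n\in A$ and $|b_n|\to 0$ (convergence in the Tate algebra), and let $B=\sup_n|b_n|<\infty$. For $c\in R\subseteq\mathfrak{o}$ with $|c|<1/B$ (interpreting $1/B=\infty$ if $B\le 1$, in which case one instead needs $|c|$ strictly less than $1$ together with a bound on the $|b_n|$), we get $|u(c)-1|\leq \sup_n|b_n||c|^n\leq B|c|<1$, so $u(c)\in 1+\m_A$ where $\m_A$ is contained in the Jacobson radical of $A$; hence $u(c)$ is a unit in $A$. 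This exhibits an open ball about $c_0$ contained in the desired set.

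I expect the main obstacle to be the Nakayama/determinant step in the precise generality needed: $T$ is not local and $z$ is not in the Jacobson radical, so the naive "$\mathcal{M}=z\mathcal{M}\Rightarrow\mathcal{M}=0$" is false (e.g.\ $\mathcal{M}=T/(zx-1)$). What saves the argument is that we do \emph{not} need $\mathcal{M}=0$ over $T$; we only need an annihilator of the special form $1+zT$, and the adjugate identity $\det(I-zA)\cdot\mathcal{M}=0$ delivers exactly this with no local hypothesis. A secondary technical point is verifying that $\det(I-zA)$, computed from the matrix $A$ over $T$, genuinely lies in $1+zT$ rather than merely in $1+zT+(\text{higher order issues})$ — but expanding the determinant shows every term other than the product of diagonal $1$'s is divisible by $z$, so this is routine. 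The final unit-in-$A$ estimate is a standard non-Archimedean geometric-series bound and presents no difficulty; one just has to be slightly careful choosing $\varepsilon$ in terms of $\sup_n|b_n|$ and, if that sup is at most $1$, shrinking by an extra factor. Care is also needed that translating $z\mapsto z+c_0$ is harmless since $c_0\in R\subseteq\mathfrak{o}$, so the translated module is still finitely generated over $K\langle x_1,\ldots,x_d,z\rangle$.
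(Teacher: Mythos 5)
Your proof is correct in substance, and takes a genuinely different route from the paper's. The paper starts from the relation $(z-c_0)h_i = g_i$ and writes the $h_i$ in terms of the generators $g_j$, giving a matrix $A=(a_{ij})$; perturbing $c_0$ to $\alpha$ then produces a change-of-basis matrix $I+T$ with $T$ of small Gauss norm, which the paper inverts by a geometric series in $\MM_\ell(K\langle x_1,\ldots,x_d,z\rangle)$, concluding that $(z-\alpha)\M = \M$. You instead run the determinant trick all the way to a scalar: from $m = zAm$ you extract $u(z) := \det(I - zA)\in 1+zT$ with $u(z)\M = 0$, and then argue that $u(c)$ is a unit in $A = K\langle x_1,\ldots,x_d\rangle$ for $|c - c_0|$ small, so $\M|_{z=c} = u(c)^{-1}\cdot u(c)\M|_{z=c} = 0$. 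Both proofs ultimately rest on the same non-Archimedean estimate (Gauss norm less than one implies invertibility via geometric series); the difference is whether one inverts a matrix over the big ring $T$ or a single element of the smaller ring $A$ after specializing. Your version has the pleasant feature of producing a single annihilator $u(z)$ that simultaneously witnesses vanishing for all $c$ in the neighbourhood.

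One justification in your write-up is stated incorrectly: you say $u(c)\in 1+\m_A$ where $\m_A$ is contained in the Jacobson radical of $A$. But $A = K\langle x_1,\ldots,x_d\rangle$ is a Jacobson ring with trivial Jacobson radical, and the set of elements of Gauss norm less than $1$ is not an ideal of $A$ (it is an ideal only of the subring $A^\circ = \mathfrak{o}\langle x_1,\ldots,x_d\rangle$ of power-bounded elements). The correct and elementary justification is that $A$ is complete in the Gauss norm, so if $|1 - u(c)| < 1$ then $u(c)^{-1} = \sum_{n\geq 0}(1-u(c))^n$ converges in $A$; equivalently, one may invoke the standard fact that $f = \sum a_\nu x^\nu$ is a unit in the Tate algebra if and only if $|a_0| > |a_\nu|$ for all $\nu\neq 0$. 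With that repair, and with the small case distinction on $B = \sup_n|b_n|$ tidied up (take, say, $\varepsilon < \min(1, 1/B)$), your argument goes through.
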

\begin{proof}
Now let $\mathcal{T}$ denote the collection of $c\in R$ for which $\M|_{z=c}=(0)$, or equivalently, $(z-c)\M=\M$. We show that $\mathcal{T}$ is open. To see this, let $g_1,\ldots ,g_\ell$ be a set of generators for $\M$ and let $c\in \mathcal{T}$.  Then $(z-c)h_i = g_i$ for some $h_i\in \M$ for $i=1,\ldots ,\ell$. We can write $h_i = \sum a_{i,j}g_i$.  Let $\varepsilon$ be a positive number that is less than the reciprocal of the Gauss norm of each nonzero $a_{i,j}$. 
Then for $\alpha\in B(c,\varepsilon)\cap R$, the ball of radius $\varepsilon$ centered at $c$, we have $(z-\alpha)h_i = (z-c)h_i+(c-\alpha)h_i=g_i + \sum (c-\alpha)a_{i,j}(z) g_j $.  
We can write this in the form
$$(z-\alpha)\left[ \begin{array}{c} h_1 \\ \vdots \\ h_\ell \end{array} \right] = 
(I + T)\left[ \begin{array}{c} g_1 \\ \vdots \\ g_\ell \end{array} \right],$$ where $T$ is a $\ell\times \ell$ matrix with entries in $K\langle x_1,\ldots ,x_d,z\rangle$ that are of Gauss norm is strictly less than $1$ and $I$ is the identity matrix. In particular, $I+T$ is invertible with inverse $I-T + T^2 - T^3 + \cdots \in \MM_d(K\langle x_1,\ldots ,x_d,z\rangle)$.  Thus multiplying our vector equation on the left by $(I+T)^{-1}$ we see that $(z-\alpha)h_i$ with $i=1,\ldots ,\ell$ generate $\M$.  Thus $B(c,\varepsilon)\subseteq\mathcal{T}$, showing that $\mathcal{T}$ is open.%\matt{as far as I can tell, we never assumed or needed $\M$ is saturated, but please double check}
\end{proof}

We next reduce the general case to that of cyclic modules.

\begin{lemma}
\label{l:Mc=0-is-closed-->cyclic}
Let $S=K\langle x_1,\ldots ,x_d\rangle$ and $\mathcal{M}$ be a finitely generated $S\langle z\rangle$-module. If $\N=S\<z\> / \ann(M)$. Then
\[
\{c\in R\mid \M|_{z=c}=(0)\} = \{c\in R\mid \N|_{z=c}=(0)\}.
\]
\end{lemma}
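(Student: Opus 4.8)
The plan is to show that $\M$ and $\N = S\langle z\rangle/\ann(M)$ have the same annihilator, after which the equality of the two specialization sets is immediate. Write $R = S\langle z\rangle$ and $A = \ann_R(\M)$, so that $\N = R/A$ as an $R$-module. The statement $\M|_{z=c} = (0)$ means $\M = (z-c)\M$, and likewise $\N|_{z=c} = (0)$ means $R/A = (z-c)(R/A)$, i.e.\ $R = A + (z-c)R$, i.e.\ $z-c$ is a unit in $R/A$.

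First I would verify the backward inclusion. Suppose $\M|_{z=c} = (0)$; I want $\N|_{z=c} = (0)$. Since $\M$ is finitely generated over the noetherian ring $R$, Nakayama-type reasoning applies: from $(z-c)\M = \M$ we get, by the determinant trick (Cayley--Hamilton applied to multiplication by $z-c$ on a finite generating set), an element $u \in R$ with $u \equiv 1 \pmod{(z-c)}$ and $u\M = 0$, so $u \in A$. Then $1 \in A + (z-c)R$, which says exactly $\N|_{z=c} = (0)$. (Concretely, in Lemma~\ref{l:Mc=0-is-open} such a $u$ is produced explicitly as $1$ minus entries of the nilpotent-in-the-quotient matrix $T$; I would reuse that computation.)

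For the forward inclusion, suppose $\N|_{z=c} = (0)$, i.e.\ there exist $a \in A$ and $b \in R$ with $a + (z-c)b = 1$. Then on $\M$ we have $a\M = 0$ since $a \in \ann(\M)$, hence $(z-c)b\M = \M$, so in particular $(z-c)\M \supseteq (z-c)b\M = \M$, giving $(z-c)\M = \M$, i.e.\ $\M|_{z=c} = (0)$. This direction is purely formal and needs no finiteness or noetherian input beyond $a$ annihilating $\M$.

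I do not expect a serious obstacle here; the only point requiring care is the determinant-trick argument in the backward direction, to make sure we genuinely land with a unit-up-to-$(z-c)$ annihilator rather than merely a local statement — but this is standard for finitely generated modules over a commutative ring, and in any case the explicit computation in the proof of Lemma~\ref{l:Mc=0-is-open} already exhibits the required element of $\ann(\M)$ congruent to a unit modulo $(z-c)$. So the proof is essentially a two-line symmetric chase once that lemma is in hand.
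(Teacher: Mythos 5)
Your proof is correct and takes essentially the same route as the paper: the forward direction ($\N|_{z=c}=(0)\Rightarrow\M|_{z=c}=(0)$) is the same formal chase, and the backward direction via the determinant trick---producing $\det(I-(z-c)A)\in\ann(\M)$ congruent to $1$ modulo $(z-c)$---is precisely the paper's argument. The parenthetical appeal to Lemma~\ref{l:Mc=0-is-open} is a little off, since that proof inverts $I+T$ by a Gauss-norm smallness argument rather than by Cayley--Hamilton and does not by itself exhibit an element of $\ann(\M)$, but your main argument does not rely on it.
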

\begin{proof}
Let $\J=\ann(\M)$. First if $\N|_{z=c}=(0)$, then $1 = f+(z-c)g$ with $f\in\J$ and some $g\in S\langle z\rangle$, and so every $m\in\M$ can be expressed as $m=fm+(z-c)gm=(z-c)gm\in (z-c)\M$ showing that $\M|_{z=c}=(0)$.

Conversely, suppose $\M|_{z=c}=(0)$ and let $m_1,\dots,m_\ell$ be a set of generators of $\M$. Then there is a matrix $A\in\MM_\ell(S\<z\>)$ such that
\[
\left[ \begin{array}{c} m_1 \\ \vdots \\ m_\ell \end{array} \right] = (z-c)A\left[ \begin{array}{c} m_1 \\ \vdots \\ m_\ell \end{array} \right].
\]
Let $f=\det(I - (z-c)A)\in S\<z\>$ which is non-zero since it is of the form $1+(z-c)g$ with $g\in S\<z\>$, and $z-c$ is not a unit in $S\<z\>$. Letting $B$ be the adjugate matrix of $I-(z-c)A$, we know $B(I-(z-c)A)=fI$. Multiplying by $m_i$, we see $fm_i=0$ for all $i$, and so $f\in\J$. Thus, $1=f-(z-c)g\in\J+(z-c)S\<z\>$ showing that $\N|_{z=c}=(0)$.
\end{proof}

\begin{lemma}
\label{l:Mc=0-cyclic-->prime}
Let $S=K\langle x_1,\ldots ,x_d\rangle$. Given an ideal $I$ of $S\<z\>$, let $\M_I=S\<z\>/I$ and $\T_I$ be the set of $c\in R$ for which $M_I|_{z=c}=(0)$. If $J$ is the intersection of prime ideals $Q_1,\dots,Q_r$, then $\T_J=\bigcap_i\T_{Q_i}$. Moreover, if each $\T_{Q_i}$ is, up to addition and removal of finite sets, a union of $\varepsilon_i$-balls, then $\T_J$ is as well.
\end{lemma}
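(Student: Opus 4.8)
The plan is to reformulate membership in $\T_I$ ideal-theoretically and then dispatch the two assertions in turn. Exactly as in the proof of Lemma~\ref{l:Mc=0-is-closed-->cyclic}, for any ideal $I\subseteq S\<z\>$ one has $\M_I|_{z=c}=S\<z\>/(I+(z-c)S\<z\>)$, so $c\in\T_I$ if and only if $I+(z-c)S\<z\>=S\<z\>$; equivalently, $V(I)$ and $V(z-c)$ are disjoint in the spectrum of $S\<z\>$.

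For the identity $\T_J=\bigcap_i\T_{Q_i}$, fix $c$ and write $\mathfrak b=(z-c)S\<z\>$. I would invoke two elementary facts: first, for any ideal $\mathfrak a$ one has $\mathfrak a+\mathfrak b=S\<z\>$ if and only if $\sqrt{\mathfrak a}+\mathfrak b=S\<z\>$ (from $1=u+v$ with $u^n\in\mathfrak a$ and $v\in\mathfrak b$, expand $1=(u+v)^n$); second, $Q_1\cdots Q_r+\mathfrak b=S\<z\>$ if and only if $Q_i+\mathfrak b=S\<z\>$ for every $i$ (forward since $Q_1\cdots Q_r\subseteq Q_i$, backward by multiplying the relations $1=q_i+v_i$). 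Since $J=\bigcap_iQ_i$ and $Q_1\cdots Q_r$ have the same radical, applying the first fact to $\mathfrak a=J$ and to $\mathfrak a=Q_1\cdots Q_r$ and then the second fact gives the chain $c\in\T_J\Leftrightarrow J+\mathfrak b=S\<z\>\Leftrightarrow Q_1\cdots Q_r+\mathfrak b=S\<z\>\Leftrightarrow(\forall i)\ Q_i+\mathfrak b=S\<z\>\Leftrightarrow c\in\bigcap_i\T_{Q_i}$, which is the first claim. (Geometrically this is just $V(J)=\bigcup_iV(Q_i)$, and primeness of the $Q_i$ is not even used.)

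For the ``moreover'' clause, suppose each $\T_{Q_i}$ has finite symmetric difference with a union $B_i$ of balls of radius $\varepsilon_i$. From the elementary inclusion $(\bigcap_iA_i)\triangle(\bigcap_iB_i)\subseteq\bigcup_i(A_i\triangle B_i)$, taken with $A_i=\T_{Q_i}$ and combined with the first part, it follows that $\T_J=\bigcap_i\T_{Q_i}$ has finite symmetric difference with $\bigcap_iB_i$. It then remains to see that $\bigcap_iB_i$ is a union of balls of radius $\varepsilon:=\min_i\varepsilon_i$: since $R$ is ultrametric, the balls of radius $\varepsilon$ partition $R$, every ball of radius $\ge\varepsilon$ is a union of members of this partition, so each $B_i$ — and hence $\bigcap_iB_i$, which is the union of those $\varepsilon$-balls lying in all the $B_i$ — is such a union. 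Thus $\T_J$ is, up to a finite set, a union of $\varepsilon$-balls.

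I do not foresee a genuine obstacle: the first assertion is formal commutative algebra (radicals, and comaximality of $(z-c)S\<z\>$ with each $Q_i$), and the second is bookkeeping with symmetric differences together with the ultrametric observation that coarser balls refine into finer ones. The only points requiring a little care are treating ``up to addition and removal of finite sets'' uniformly as a statement about symmetric differences, and justifying the passage to the common refinement at radius $\min_i\varepsilon_i$ when the radii $\varepsilon_i$ differ — both routine.
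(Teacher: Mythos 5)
Your proof is correct and follows essentially the same route as the paper's: reformulate $c\in\T_I$ as comaximality of $(z-c)$ with $I$, reduce $J$ to the individual $Q_i$ by multiplying the comaximality relations, and then observe that in the ultrametric setting an intersection of unions of balls of various radii refines to a union of balls of the minimum radius. Your symmetric-difference bookkeeping is in fact a bit cleaner than the paper's (which states a set identity for $\bigcap_i(\T_i\cup\mathcal{S}_{i1})$ that only holds as an inclusion, though the conclusion it draws is still valid), and your remark that primeness of the $Q_i$ is never used is accurate.
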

\begin{proof}
Notice that $c\in\T_I$ if and only if $(z-c)$ and $I$ are comaximal. For ease of notation, let $\T_i:=\T_{Q_i}$. It is clear that if $(z-c)+J=S\<z\>$, then $(z-c)+Q_i=S\<z\>$ for all $i$. Conversely, we have $1 = (z-c)a_i+q_i$ for $f_i\in S\<z\>$ and $q_i\in Q_i$. So $\prod_i (1-(z-c)a_i)\in J$ and it is of the form $1+(z-c)f$, so $(z-c)+J=S\<z\>$. This establishes the first statement.

Now suppose that there are finite sets $\mathcal{S}_{i1}$ and $\mathcal{S}_{i2}$ such that $(\T_i\cup\mathcal{S}_{i1})\cap\mathcal{S}_{i2}^c$ is a finite union of $\epsilon_i$-balls. Now, $\bigcap_i((\T_i\cup\mathcal{S}_{i1})\cap\mathcal{S}_{i2}^c)=(\bigcap_i\T_i\cup\bigcap_i\mathcal{S}_{i1})\cap\bigcap_i\mathcal{S}_{i2}^c$. Since $\bigcap_i\mathcal{S}_{i1}$ is a finite set and $\bigcap_i\mathcal{S}_{i2}^c$ is the complement of a finite set, we see that up to addition and removal of finite sets, $\T_J$ is the intersection of unions of $\varepsilon_i$-balls.

Without loss of generality, $\varepsilon_1\geq\dots\geq\varepsilon_r$. Let $(\T_i\cup\mathcal{S}_{i1})\cap\mathcal{S}_{i2}^c$ be the union of $\varepsilon_i$-balls $D_{i,j}$ where $j$ runs through an index set $\mathcal{S}_i$. Then up to addition and removal of finite sets, $\T_J$ is the union of sets of the form $D_{1,j_1}\cap\dots\cap D_{r,j_r}$ where $j_i\in\mathcal{S}_i$. Since $K$ is non-Archimedean, if two balls intersect, then one of them is contained in the other. Therefore, $D_{1,j_1}\cap\dots\cap D_{r,j_r}$ is either empty, or is an $\varepsilon_r$-ball, which finishes the proof.
\end{proof}

Finally, we handle the case where $\M$ is cyclic and $R$ is compact.%Technically, the proof of Proposition \ref{prop:Mc=0-is-closed} encompasses that of Lemma \ref{l:Mc=0-is-open}, i.e.~when running the proof below in the case where $R$ is not compact, we also obtain the result that the $\{c\mid\M|_{z=c}=(0)\}$ is open. We chose to keep the proof of Lemma \ref{l:Mc=0-is-open} since the proof is more direct and also shows that one does not need to remove any finite sets to obtain the openness result.

\begin{proposition}
\label{prop:Mc=0-is-closed}
Let $S=K\langle x_1,\ldots ,x_d\rangle$ and let $\mathcal{M}=S\<z\>/J$. If $R$ is compact, then there exists $\varepsilon>0$ such that, up to the addition and removal of finite sets, the set of $c\in R$ for which $\mathcal{M}|_{z=c}=(0)$ is a union of $\varepsilon$-balls.

In particular, it follows that if $R=\mathbb{Z}$ then up to addition and removal of finite sets, the set of $c\in R$ for which $\mathcal{M}|_{z=c}=(0)$ is a finite union of arithmetic progressions with difference $p^r$ for some $r\ge 0$.
\end{proposition}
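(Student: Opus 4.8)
The plan is to reduce, in a sequence of standard steps, to a purely one-variable statement about convergent power series, and then to attack that by a Strassman-type induction.

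First I would pass from $J$ to its radical. Since $\mathcal M|_{z=c}=(0)$ is equivalent to $J+(z-c)S\langle z\rangle=S\langle z\rangle$, and this holds for $J$ precisely when it holds for $\sqrt J$ — if $\sqrt J+(z-c)$ is the unit ideal then the image of $\sqrt J$ in $S\langle z\rangle/(z-c)$ lies in the radical of the image of $J$, forcing $J+(z-c)$ to be the unit ideal too — I may assume $J$ is radical. As $S\langle z\rangle$ is noetherian, $J=Q_1\cap\cdots\cap Q_r$ with the $Q_i$ prime, and Lemma~\ref{l:Mc=0-cyclic-->prime} then reduces the claim to the case $J=Q$ prime. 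Now $B:=S\langle z\rangle/Q$ is an affinoid domain, and $\mathcal M|_{z=c}=(0)$ exactly when $\bar z-c$ is a unit of $B$.

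Next I would Noether-normalize: there is a module-finite injection $A:=K\langle y_1,\dots,y_n\rangle\hookrightarrow B$ with $n=\dim B$. Since $A$ is a normal noetherian domain and $B$ a domain, an element $b\in B$ is a unit iff $N_{B/A}(b)\in A^\times$; taking $b=\bar z-c$ and noting $N_{B/A}(\bar z-c)=\pm\chi(c)$, where $\chi(T)\in A[T]$ is the characteristic polynomial of multiplication by $\bar z$ on $B\otimes_A\operatorname{Frac}(A)$ — monic of degree $\delta=[\operatorname{Frac}(B):\operatorname{Frac}(A)]$, with coefficients in $A$ by normality — we get that $\mathcal M|_{z=c}=(0)$ iff $\chi(c)\in A^\times$. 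Writing $\chi(c)=\sum_{\alpha\in\NN^n}g_\alpha(c)\,y^\alpha$, we have $g_0\in K[T]$ monic of degree $\delta$, each $g_\alpha\in K[T]$ of degree $<\delta$ for $\alpha\ne 0$, and the coefficients of $g_\alpha$ tending to $0$ as $|\alpha|\to\infty$. By the elementary criterion for a unit of a Tate algebra — the constant Taylor coefficient must strictly dominate every other coefficient in absolute value — the set $\mathcal T=\{c\in R:\mathcal M|_{z=c}=(0)\}$ becomes
\[
\mathcal T=\bigl\{c\in R:\ |g_0(c)|>|g_\alpha(c)|\ \text{for all }\alpha\ne 0\bigr\}.
\]

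It then remains to prove this purely one-variable statement, and this is where the real work lies. Let $\kappa$ be the contribution to $|g_0|$ on $\mathfrak o$ of the roots of $g_0$ of absolute value $>1$, so that $|g_0(c)|=\kappa\cdot\prod_{|r_j|\le 1}|c-r_j|$ for $c\in\mathfrak o$; I would split $\mathfrak o$ into the ``generic'' locus $\{|g_0(c)|=\kappa\}$ and the complementary locus $\{|g_0(c)|<\kappa\}$, which is a finite union of residue balls (the preimage under reduction of the zeros of the reduction of a suitable associate of $g_0$). On the generic locus only finitely many $\alpha$ can ever satisfy $|g_\alpha(c)|\ge|g_0(c)|\ge\kappa$, since the coefficients of $g_\alpha$ tend to $0$; for each such $\alpha$ the offending set is cut out by a polynomial inequality $|g_\alpha(c)|\ge\kappa$, whose solution set is a finite union of balls (the complement of a $p$-adic lemniscate), and intersecting these with the (finite-union-of-balls) generic locus shows $\mathcal T$ there is a finite union of balls. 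On each residue ball around a root of $g_0$ one rescales and recurses. The recursion is organized by induction on (essentially) the number of roots of $g_0$ in $\mathfrak o_{\overline K}$ counted with multiplicity; the delicate point — and the main obstacle — is that when all these roots are clustered in a single residue ball, the strict decrease cannot come from $g_0$ and must instead be extracted from the auxiliary family $(g_\alpha)$, so one has to track carefully how the Newton-polygon data of the rescaled $g_\alpha$ evolves. This ``Strassman for families'' bookkeeping is the heart of the proof; its base case, in which all $g_\alpha$ vanish, is ordinary Strassman ($\mathcal T=R\setminus\{g_0=0\}$ is cofinite).

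Finally, since $R$ is compact it is a finite disjoint union of $\varepsilon$-balls, so a ``finite union of $\varepsilon$-balls'' inside $R$ is genuinely finite and its complement in $R$ is again a finite union of $\varepsilon$-balls; this yields the asserted form of $\mathcal T$. For $R=\ZZ\subseteq\ZZ_p$ and $\varepsilon=p^{-r}$, each such ball is exactly an arithmetic progression of common difference $p^r$, giving the final statement.
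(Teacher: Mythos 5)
Your opening reductions (radical, then prime via Lemma~\ref{l:Mc=0-cyclic-->prime}, then Noether normalization, then a determinant criterion for when $\bar z - c$ is a unit) track the paper closely, but then you branch off and the proposal has a genuine gap at exactly the step you yourself flag as ``the heart of the proof.'' You reduce to showing that
\[
\{c \in R : |g_0(c)| > |g_\alpha(c)| \text{ for all } \alpha \neq 0\}
\]
is, up to finite sets, a finite union of $\varepsilon$-balls, and you propose a Strassman-for-families induction on the Newton-polygon data of the rescaled $(g_\alpha)$. You concede that the recursion stalls precisely when the roots of $g_0$ cluster into a single residue ball and the strict decrease must come from the auxiliary family; you describe the bookkeeping that would be needed but do not carry it out. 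As written, this is a sketch of a plan, not a proof, and the unresolved clustering case is exactly where the difficulty of the whole proposition is concentrated.

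The paper sidesteps that entire bookkeeping by working with the \emph{minimal} polynomial $f(t)$ of $\iota(z)$ rather than the characteristic polynomial (it localizes first so that $\M$ becomes free over $T_q$, which also resolves a secondary gap in your proposal: $N_{B/A}(b) \in A^\times$ only cleanly detects units when $B$ is $A$-free, not merely module-finite). The decisive observation is that $\M = S\<z\>/Q$ is a domain: if $f(c)=0$ for some $c \in R$, factor $f(t)=(t-c)r(t)$; then $(z-c)r(z)=0$ in $\M$, and domain-ness plus minimality force $f(t)=u(t-c)$ with $u$ a unit, after which $f(c')=u(c'-c)$ is a unit for every $c'\neq c$ and the set in question is all of $R$ minus a point. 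If instead $0 \notin f(R)$, compactness of $R$ gives a uniform lower bound $\|f(c)\| > \varepsilon$, and an elementary Taylor estimate (bounding $|f(c)-f(c')|$ by $|c-c'|$ times a constant depending only on the Gauss norms of the coefficients and $p^d$) shows $f(c')$ is a unit whenever $|c-c'|$ is small and $f(c)$ is a unit. This two-case dichotomy replaces your Newton-polygon induction entirely. If you want to complete your argument along your own lines, you would need to actually prove the ``Strassman for families'' lemma you invoke; the shorter path is to switch to the minimal polynomial and exploit that $S\<z\>/Q$ is a domain.
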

\begin{proof}
Since there are only finitely many balls of a given radius in $\ZZ$, the statement concerning the case $R=\ZZ$ is immediate from the more general statement about compact $R$.

To prove the general claim about compact $R$, first notice that $\M|_{z=c}=(0)$ if and only if $(z-c)$ and $J$ are comaximal. Since $S\<z\>$ is Jacobson, this is equivalent to $(z-c)$ and $\rad(J)$ being comaximal, so we can assume $J$ is a radical ideal. Now, since $S\<z\>$ is Noetherian, $J$ is a finite intersection of prime ideals $Q_1,\dots,Q_r$. Then by Lemma \ref{l:Mc=0-cyclic-->prime}, we may assume that $J$ is prime.

By Noether normalization, $\M$ is a finitely generated module over some $T=K\<t_1,\dots,t_r\>$. Since $T$ is a domain, the stalk of $\M$ at the generic point of $\spec T$ is a vector space, hence free, and so there is an open neighborhood of $\spec T$ where $\M$ is free. So, there is some non-zero $q\in T$ such that the localization $\M_q$ is a free $T_q$-module, say of rank $n$. Then the action map of $\M$ on itself gives a map $\iota\colon\M\to\MM_n(T_q)$ which is an embedding since $\J$ is prime. Notice that $\M|_{z=c}=(0)$ if and only if $z-c\in\M$ is a unit if and only if $\det(\iota(z-c))\in T_q^*$. Indeed, it is clear that if $z-c$ is invertible then $\det(\iota(z-c))\in T_q^*$, and the converse follows from the Cayley-Hamilton theorem.

Let $t$ be an indeterminate and consider the minimal polynomial $f(t)$ of $\iota(z)$. Since the minimal polynomial divides the characteristic polynomial, there is a polynomial $g$ such that $f(t)g(t)=\det(\iota(z)-t)$. Away from the finitely many roots of $g(t)$, we see $f(c)\in T_q^*$ if and only if $\det(\iota(z-c))\in T_q^*$. Thus, we must prove that, up to addition and removal of a finite set, the set of $c$ for which $f(c)\in T_q^*$ is closed. In fact, we prove the stronger statement that there is an $\varepsilon>0$ such that the set of such $c$ is a finite union of $\varepsilon$-balls.

If there exists $c\in R$ with $f(c)=0$, then $f(t) = (t-c) r(t)$ in $\M$, and so $(z-c) r(z) = 0$ in $\M$. Since $\M$ is a domain, $z-c=0$ or $r(z)=0$ in $\M$.  Since $f$ is the minimal polynomial, the only case is that $f(t)=u(t-c)$, for some unit $u\in T_q^*$. But then $f(c') = u(c'-c)$, which is always a unit for $c\neq c'$, which proves the claim.

We now handle the case where $0\notin f(R)$. Since $R$ is compact, there is an $\varepsilon>0$ such that $||f(c)||>\varepsilon$. Let $f(t) = a_0+a_1t + \dots + a_d t^d$ with the $a_i\in R_q$; in fact, since $q\in R_q^*$ and we are only concerned with whether or not $f(c)\in R_q^*$, we can clear denominators and assume $a_i\in R$. Let $N$ be the maximum of the Gauss norm of the $a_i$. We show that if $f(c)$ is a unit and $|c-c'|<\varepsilon/(2Np^d)$, then $f(c')$ is a unit. To see why this is true, first consider the Taylor expansion $f(t)=f(c) + (t-c)r(t)$, where
\[
r(t) = f'(t) + (t-c) \frac{f''(t)}{2} + \dots + (t-c)^{d-1} \frac{f^{(d)}(t)}{d!}.
\]
We know that for $1\leq k\leq d$, the norm of $1/k$ is bounded by $p^d$. The coefficients of $f^{(k)}(t)$ are integer linearly combinations of the coefficients of $f(t)$, so have norm bounded by $N$. Since $R$ is a subring of the valuation ring, $|c'|$ and $|c-c'|$ are at most $1$. It follows that $|r(c')|\leq Np^d$, and so $|(c'-c)r(c')|<\varepsilon/2$. Since $|f(c)|>\varepsilon$, we see $(c'-c)r(c')/f(c)$ has norm at most $1/2$. Therefore,
\[
\frac{f(c')}{f(c)}=1+\frac{(c'-c)r(c')}{f(c)}
\]
is a unit.
\end{proof}

\end{appendices}

\end{document}